\newtheorem{question}[theorem]{Question}
\definecolor{red30}{RGB}{229,242,251}
\definecolor{green30}{RGB}{132,195,237}
\definecolor{openColor}{RGB}{234,234,234}
\newcommand{\N}{\mathbb{N}}
\newcommand{\R}{\mathbb{R}}
\newcommand{\calG}{\mathcal{G}}
\newcommand{\calH}{\mathcal{H}}
\newcommand{\calF}{\mathcal{F}}
\newcommand{\calS}{\mathcal{S}}
\newcommand{\calB}{\mathcal{B}}
\newcommand{\calBc}{\mathcal{B}_{\mathrm{c}}}
\newcommand{\calP}{\mathcal{P}}
\newcommand{\calC}{\mathcal{C}}
\newcommand{\calI}{\mathcal{I}}
\newcommand{\calComp}{\mathcal{K}}
\newcommand{\calQ}{\mathcal{Q}}
\newcommand{\union}[1]{\ensuremath{\overline{#1}}}
\newcommandx{\cn}[3]{%
   \ifthenelse{ \equal{#3}{} }
      {\ensuremath{\operatorname{c}_{\text{\fontfamily{iwona}\selectfont #1}}^{#2}}}
      {\ensuremath{\operatorname{c}_{\text{\fontfamily{iwona}\selectfont #1}}^{#2}(#3)}}
}
\newcommand{\setmid}{\ensuremath{\; \colon \;}}
\newcommand{\shortref}[1]{{\color{black}$\langle$\ref{#1}$\rangle$}}
\newcommand{\smallshortref}[1]{{\scriptsize{\color{black}$\langle$\ref{#1}$\rangle$}}}
\newcommand{\ra}[1]{\renewcommand{\arraystretch}{#1}}
\newcolumntype{Y}{@{\extracolsep{3pt}}>{\centering\arraybackslash}X@{\extracolsep{0pt}}}
\DeclarePairedDelimiterX\set[1]\lbrace\rbrace{\def\given{\setmid}#1}
\DeclarePairedDelimiter\ceil\lceil\rceil
\DeclareMathOperator{\tw}{tw}
\DeclareMathOperator{\mad}{mad}
\DeclareMathOperator{\shift}{Sh}
\title{Boundedness and Separation in the Graph Covering Number Framework}
\author{Miriam Goetze}{Karlsruhe Institute of Technology, Germany}{miriam.goetze@kit.edu}{https://orcid.org/0000-0001-8746-522X}{funded by the Deutsche Forschungsgemeinschaft (DFG, German Research Foundation) -- 520723789}
\author{Peter Stumpf}{Charles University, Prague, Czech Republic\\ Czech Technical University in Prague, Czech Republic}{stumpf@kam.mff.cuni.cz}{https://orcid.org/0000-0003-0531-9769}{funded by Czech Science Foundation grant no. 23-04949X}
\author{Torsten Ueckerdt}{Karlsruhe Institute of Technology, Germany}{torsten.ueckerdt@kit.edu}{https://orcid.org/0000-0002-0645-9715}{supported by the Deutsche Forschungsgemeinschaft (DFG, German Research Foundation) -- 520723789}
\authorrunning{M. Goetze, P. Stumpf, T. Ueckerdt}
\keywords{covering numbers, arboricity, binding function, hereditary}
\begin{document}

\maketitle

\begin{abstract}
    For a graph class $\calG$ and a graph $H$, the four $\calG$-covering numbers of $H$, namely global $\cn{g}{\calG}{H}$, union $\cn{u}{\calG}{H}$, local $\cn{l}{\calG}{H}$, and folded $\cn{f}{\calG}{H}$, each measure in a slightly different way how well $H$ can be covered with graphs from $\calG$.
    For every $\calG$ and $H$ it holds
    \[
        \cn{g}{\calG}{H} \geq \cn{u}{\calG}{H} \geq \cn{l}{\calG}{H} \geq \cn{f}{\calG}{H}
    \]
    and in general each inequality can be arbitrarily far apart.
    We investigate structural properties of graph classes~$\calG$ and $\calH$ such that for all graphs $H \in \calH$, a larger $\calG$-covering number of $H$ can be bounded in terms of a smaller $\calG$-covering number of $H$.
    For example, we prove that if $\calG$ is hereditary and the chromatic number of graphs in $\calH$ is bounded, then there exists a function $f$ (called a binding function) such that for all $H \in \calH$ it holds $\cn{u}{\calG}{H} \leq f(\cn{l}{\calG}{H})$.

    For $\calG$ we consider graph classes that are component-closed, hereditary, monotone, sparse, or of bounded chromatic number.
    For $\calH$ we consider graph classes that are sparse, $M$-minor-free, of bounded chromatic number, or of bounded treewidth.
    For each combination and every pair of $\calG$-covering numbers, we either give a binding function $f$ or provide an example of such $\calG,\calH$ for which no binding function exists.
\end{abstract}

\section{Introduction}
\label{sec:introduction}

One of the most fundamental concepts in graph theory is the decomposition of a (large) graph~$H$ into smaller pieces.
What ``decomposing'' means exactly and what ``small pieces'' are precisely depends on the concrete situation and gives room for a variety of problems.
For example, there are numerous variants of graph colorings, where one colors the vertices or edges of~$H$ subject to each color class forming restricted subgraphs such as independent sets, matchings, forests, planar graphs, interval graphs, $K_n$-free graphs, just to name a few.

In 2016, Knauer and the last author~\cite{Knauer2016_3w3c1g} introduced \emph{covering numbers} as a unified framework to investigate decomposition and coloring problems and their interrelations.
Specifically, they introduced for any graph~$H$, called a \emph{host graph}, and any graph class~$\calG$, called a \emph{guest class}, the global, local, and folded $\calG$-covering number of~$H$~\cite{Knauer2016_3w3c1g}, which were later complemented by the union $\calG$-covering number of~$H$~\cite{blasius2018local_boxicity,merker2019local_page_numbers}.

Each of the four $\calG$-covering numbers is an integer-valued parameter of the host graph~$H$ and each measures in a different way how well~$H$ can be decomposed into (in general: covered by) graphs from the guest class~$\calG$.
For example\footnote{
    We give the formal definitions of all four $\calG$-covering numbers in \cref{subsec:covering-numbers} below.
    \label{note:formal-definitions}
}, the global $\calG$-covering number of~$H$, denoted by~$\cn{g}{\calG}{H}$, is the smallest~$k$ for which there are $k$ subgraphs of~$H$ that are in graph class~$\calG$ and together cover all edges of~$H$.
Many important graph parameters can be directly interpreted as a global $\calG$-covering number for an appropriate guest class~$\calG$.
One of the earliest instances is the \emph{arboricity} of~$H$, i.e., the minimum number of forests that decompose~$H$, or in other words, the global $\calF$-covering number~$\cn{g}{\calF}{H}$ of~$H$ where $\calF$ denotes the class of all forests.
Famously, Nash-Williams proved~\cite{Nash-William1964_decomposition_into_forests} that for every graph~$H$ we have
\[
    \cn{g}{\calF}{H} = \max\left\{ \ceil*{\frac{\abs{E(H')}}{\abs{V(H')}-1}} \setmid H' \subseteq H, \abs{V(H')} > 1 \right\}.
\]
So for the global $\calG$-covering number we cover the host graph~$H$ with subgraphs from $\calG$.
Relaxing step-by-step the concept of a $\calG$-cover results\cref{note:formal-definitions} in the union, local, and folded $\calG$-covering numbers of~$H$, denoted by $\cn{u}{\calG}{H}$, $\cn{l}{\calG}{H}$, $\cn{f}{\calG}{H}$, respectively.
With each $\calG$-cover being less restrictive than the previous, we obtain a chain of inequalities
\begin{equation}
    \cn{g}{\calG}{H} \geq \cn{u}{\calG}{H} \geq \cn{l}{\calG}{H} \geq \cn{f}{\calG}{H}
    \label{eq:gulf-chain}
\end{equation}
that holds for every host graph $H$ and every guest class $\calG$.

Depending on the host graph $H$ and the guest class $\calG$, each of the three inequalities in~\eqref{eq:gulf-chain} may hold with equality, but may also be arbitrarily far apart.
(We shall see several such ``separating examples'' in the course of this paper.)
That is, in general, there is no functional relation between the four $\calG$-covering numbers, unless we have additional information about $H$ and/or $\calG$.
In this paper, we investigate under which conditions on $H$ and $\calG$ two different $\calG$-covering numbers are functionally related.
Specifically, we ask (for example) for a guest class $\calG$ and a class $\calH$ of host graphs whether there exists a function $f$ such that for all graphs $H \in \calH$ we have $\cn{u}{\calG}{H} \leq f(\cn{l}{\calG}{H})$, i.e., whether within graphs in $\calH$ the union $\calG$-covering number can be bounded in terms of the local $\calG$-covering number.

\subsection{Covering Numbers}
\label{subsec:covering-numbers}

It is convenient to formalize a covering in terms of a graph homomorphism.
For graphs $G$ and $H$, a \emph{graph homomorphism} is a map $\varphi \colon V(G) \to V(H)$ with the property that if $uv \in E(G)$, then $\varphi(u)\varphi(v) \in E(H)$, i.e., $\varphi$ maps vertices of $G$ (not necessarily injectively) to vertices of $H$ such that edges are mapped to edges.
For abbreviation we shall simply write $\varphi \colon G \to H$ instead of $\varphi \colon V(G) \to V(H)$.
For a host graph $H$, a guest class $\calG$ and a positive integer $t$, a \emph{$t$-global $\calG$-cover} of $H$ is an edge-surjective homomorphism $\varphi \colon G_1 \cupdot \cdots \cupdot G_t \to H$ such that $G_i \in \calG$ for each $i \in [t]$.\footnote{For $t \in \N$, we denote $[t] = \{1,\ldots,t\}$ to be the set of the first $t$ counting numbers.}
Here $\cupdot$ denotes the vertex-disjoint union of graphs.
We say that $\varphi$ is \emph{injective} if its restriction $\varphi|_{G_i}$ to $G_i$ is injective for each $i \in [t]$.
A $\calG$-cover $\varphi$ is called \emph{$s$-local} if $|\varphi^{-1}(v)| \leq s$ for every $v \in V(H)$.

\medskip

In other words, if $\varphi$ is a $\calG$-cover of $H$, then
\begin{itemize}
 \item $\varphi$ is $t$-global if it uses at most $t$ graphs from the guest class $\calG$,
 \item $\varphi$ is injective if $\varphi(G_i)$ is a copy\footnote{A \emph{copy of $G$ in $H$} is a subgraph $H' \subseteq H$ such that $H'$ is isomorphic to $G$.} of $G_i$ in $H$ for each $i \in [t]$,
 \item $\varphi$ is $s$-local if for each $v \in V(H)$ at most $s$ vertices are mapped onto $v$.
\end{itemize}

\subparagraph*{The global \texorpdfstring{$\bm{\calG}$}{G}-covering number of $\bm{H}$}
is
\[
 \cn{g}{\calG}{H} = \min\{t \in \N \setmid \text{there exists a $t$-global injective $\calG$-cover of $H$}\}.  
\]
Global $\calG$-covering numbers for different guest classes $\calG$ appear frequently in the literature.
This includes the above-mentioned arboricity~\cite{Nash-William1964_decomposition_into_forests} and variants thereof~\cite{AK84,FG78,AEH81,AA89}, edge-clique cover number~\cite{EGP66}, bipartite dimension~\cite{FH96}, thickness~\cite{Tut63}, boxicity~\cite{Rob69}, and interval thickness~\cite{GW95}, just to name a few.

\subparagraph*{The union \texorpdfstring{$\bm{\calG}$}{G}-covering number of $\bm{H}$} is
\[
 \cn{u}{\calG}{H} = \min\{t \in \N \setmid \text{there exists a $t$-global injective $\union{\calG}$-cover of $H$}\},  
\]
where $\union{\calG}$ denotes the class of all vertex-disjoint unions of graphs in $\calG$.
Hence, for the union $\calG$-covering number we cover $H$ with as few subgraphs as possible, each being a vertex-disjoint union of graphs in $\calG$.
For example, $\union{\{K_2\}}$ is the class of all matchings and hence $\cn{u}{\{K_2\}}{H}$ is the chromatic index of $H$~\cite{Koenig16}.
For another example, if $\calS = \{K_{1,n} \setmid n \in \N_{\geq 1}\}$ is the class of all stars, then $\cn{u}{\calS}{H}$ is known as the star arboricity of $H$~\cite{AK84}.

\subparagraph*{The local \texorpdfstring{$\bm{\calG}$}{G}-covering number of $\bm{H}$} is
\[
 \cn{l}{\calG}{H} = \min\{s \in \N \setmid \text{there exists an $s$-local injective $\calG$-cover of $H$}\}.  
\]
Here, we do not restrict the number of guests in the cover, but rather limit the number of guests we ``see locally'' at any vertex.
For instance, $\cn{l}{\{K_2\}}{H}$ is simply the maximum degree of~$H$.
Other local $\calG$-covering numbers from the literature include the bipartite degree~\cite{FH96}, local boxicity~\cite{blasius2018local_boxicity}, local Ramsey numbers~\cite{GLST87}, and local interval numbers~\cite{Knauer2016_3w3c1g}.

\subparagraph*{The folded \texorpdfstring{$\bm{\calG}$}{G}-covering number of $\bm{H}$} is
\[
 \cn{f}{\calG}{H} = \min\{s \in \N \setmid \text{there exists an $s$-local $\calG$-cover of $H$}\}.  
\]
Here, we no longer require each $\varphi(G_i)$ in a $\calG$-cover $\varphi$ to be a subgraph of $H$, and instead allow arbitrary homomorphic images of graphs in $\calG$.
Intuitively, we can ``fold'' (by identifying non-adjacent vertices) graphs from $\calG$ and then use these to cover all edges of $H$.
Alternatively, one can think of an $s$-local $\calG$-cover of $H$ as splitting each $v \in V(H)$ into at most $s$ vertices, distributing the incident edges among these (possibly with repetition), such that the resulting graph is a vertex-disjoint union $G_1 \cupdot \cdots \cupdot G_t$ of graphs in $\calG$.
For example, if $\calP$ denotes the class of all planar graphs, then $\cn{f}{\calP}{H}$ is the splitting number of $H$~\cite{HJR85}.
And if $\calI$ denotes the class of all interval graphs, then $\cn{f}{\calI}{H}$ is the interval number of $H$~\cite{TH79}.

\medskip

Finally, let us refer to \cref{fig:example} for illustrating examples of all four $\calG$-covering numbers.

\begin{figure}[ht]
    \centering
    \begin{subfigure}[t]{0.37\textwidth}
       \centering
        \includegraphics[page=5]{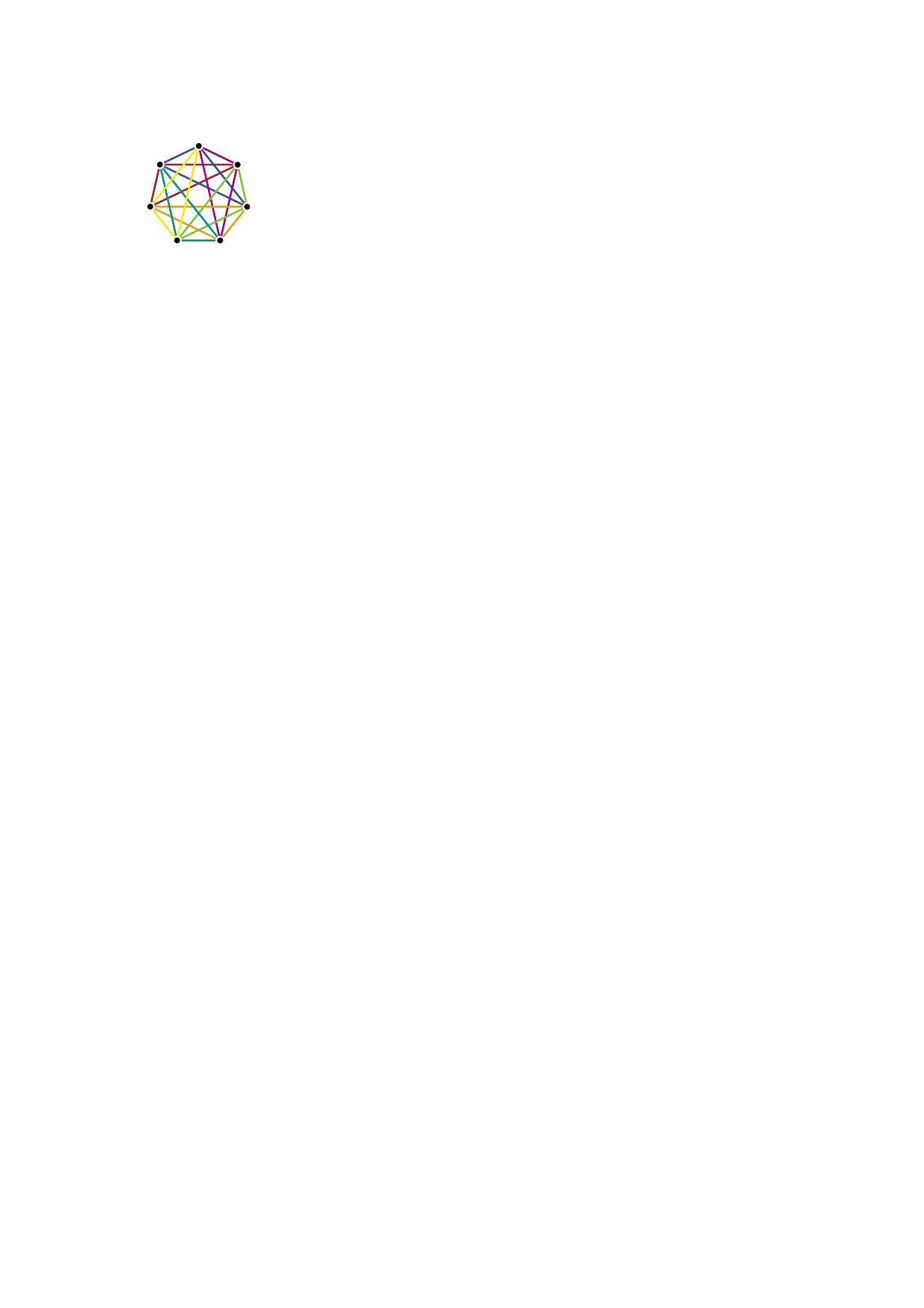}
        \caption{}
        \label{fig:examples_left}
    \end{subfigure}\hfill
    \begin{subfigure}[t]{0.37\textwidth}
        \centering
        \includegraphics[page=6]{examples.pdf}
        \caption{}
        \label{fig:examples_middle}
    \end{subfigure}\hfill
    \begin{subfigure}[t]{0.21\textwidth}
       \centering
        \includegraphics[page=7]{examples.pdf}
        \caption{}
        \label{fig:examples_right}
    \end{subfigure}
    \caption{
        Examples of $\calG$-covers of $H$:
        For $\calG = \{K_3\}$ and $H = K_7$, we have a $7$-global, $3$-local injective $\calG$-cover (\subref{fig:examples_left}), as well as a $5$-global, $5$-local $\union{\calG}$-cover (\subref{fig:examples_middle}).
        In fact, it holds that $\cn{g}{\{K_3\}}{K_7} = 7$ and $\cn{u}{\{K_3\}}{K_7} = 5$ and $\cn{l}{\{K_3\}}{K_7} = 3$.
        (\subref{fig:examples_right}) A $2$-local non-injective $\calG$-cover of $K_7$ for $\calG = {\rm Forb}(C_4)$ being the class of $C_4$-free graphs.
        In fact, it holds that $\cn{f}{{\rm Forb}(C_4)}{K_7} = 2$, while $\cn{l}{{\rm Forb}(C_4)}{K_7} \geq 3$ since the local Ramsey number of $C_4$ is $6$~\cite{GLST87}.
    }
    \label{fig:example}
\end{figure}

Let us briefly mention that for some guest classes $\calG$ some host graph $H$ might admit no $\calG$-cover, or at least no injective $\calG$-cover.
For instance consider a tree $H$ as the host and the guest class $\calC_{\rm odd} = \{C_{2n+1} \setmid n \geq 1\}$ of all odd cycles.
Then $H$ has no $\calC_{\rm odd}$-cover and consequently $\cn{g}{\calC_{\rm odd}}{H} = \cn{u}{\calC_{\rm odd}}{H} = \cn{l}{\calC_{\rm odd}}{H} = \cn{f}{\calC_{\rm odd}}{H} = \infty$.
In turn, for the guest class $\calC_{\rm even} = \{C_{2n} \setmid n \geq 2\}$ of all even cycles, tree $H$ has only non-injective $\calC_{\rm even}$-covers and we have $\cn{g}{\calC_{\rm even}}{H} = \cn{u}{\calC_{\rm even}}{H} = \cn{l}{\calC_{\rm even}}{H} = \infty$ while $\cn{f}{\calC_{\rm even}}{H}$ is easily seen to be the maximum degree of $H$, provided $H \neq K_2$.
To prevent such pathologic cases, we shall assume\footnote{This holds in particular for every non-trivial hereditary guest class.} throughout this work that all guest classes~$\calG$ contain the graph~$K_2$.
With this assumption, every host graph~$H$ admits some (injective) $\calG$-cover.

\subsection{Our Results}
\label{subsec:our-results}

Recall the chain of inequalities in~\eqref{eq:gulf-chain}, namely that for every host graph~$H$ and every guest class~$\calG$ we have $\cn{g}{\calG}{H} \geq \cn{u}{\calG}{H} \geq \cn{l}{\calG}{H} \geq \cn{f}{\calG}{H}$.
Also recall that each of these three inequalities can be arbitrarily far apart.
In this paper, we present conditions for a guest class $\calG$ and a host class $\calH$, such that within graphs in $\calH$ a larger $\calG$-covering number can be bounded in terms of a smaller $\calG$-covering number.

\begin{definition}\label{def:functionally-bounded}
    For a guest class~$\calG$ and two $\calG$-covering numbers~$\cn{x}{\calG}{}$ and~$\cn{y}{\calG}{}$, we say that a class $\calH$ of host graphs is \emph{$(\cn{x}{\calG}{}, \cn{y}{\calG}{})$-bounded} if there exists a function~$f \colon \N \to \R_{\geq 0}$ such that 
    \[
        \cn{y}{\calG}{H} \leq \cn{x}{\calG}{H} \leq f( \cn{y}{\calG}{H}) \qquad \text{ for all graphs $H \in \calH$.}
    \]
    The function $f$ is then called a \emph{binding function}.
\end{definition}

It is most interesting to consider $(\cn{x}{\calG}{}, \cn{y}{\calG}{})$-boundedness for pairs $\cn{x}{\calG}{},\cn{y}{\calG}{}$ of $\calG$-covering numbers that appear consecutively in the inequality chain~\eqref{eq:gulf-chain}.
That is, we investigate under which conditions on $\calH$ and $\calG$ the class $\calH$ of host graphs is $(\cn{g}{\calG}{},\cn{u}{\calG}{})$-bounded or $(\cn{u}{\calG}{},\cn{l}{\calG}{})$-bounded or $(\cn{l}{\calG}{},\cn{f}{\calG}{})$-bounded.

We start with the case of $(\cn{g}{\calG}{},\cn{u}{\calG}{})$-boundedness.
Recall that for a graph class $\calG$, we denote by $\union{\calG}$ the class of all vertex-disjoint unions of graphs in $\calG$.
We say that $\calG$ is \emph{union-closed} if $\union{\calG} = \calG$.
Further, we call a graph class $\calG$ \emph{monotone} (also known as \emph{subgraph-closed}) if every subgraph of every graph in $\calG$ is again in $\calG$.
For example, the class of all planar graphs is union-closed and monotone, the class of all stars is neither union-closed nor monotone (removing an edge from a star results in a disconnected graph), while the class of all interval graphs is union-closed but not monotone.

\begin{restatable}[global vs.\ union]{theorem}{thmglobalunion}
    \label{thm:global-union-introduction}{\ \\}
    Let $\calG$ and $\calH$ be two graph classes.
    Then each of the following holds.
    \begin{enumerate}
        \item If $\calG$ is union-closed, then (clearly) for all graphs $H \in \calH$ we have $\cn{g}{\calG}{H} = \cn{u}{\calG}{H}$.
            In particular, $\calH$ is $(\cn{g}{\calG}{},\cn{u}{\calG}{})$-bounded.
        \item If $\calH$ is monotone, then
        \[
            \text{$\calH$ is $(\cn{g}{\calG}{},\cn{u}{\calG}{})$-bounded \quad if and only if \quad $\max\{ \cn{g}{\calG}{J} \setmid J \in \union{\calG}\cap \calH \} < \infty$.}
        \]
    \end{enumerate}
\end{restatable}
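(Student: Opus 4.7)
The first statement is immediate from the definitions: if $\union{\calG} = \calG$, then the classes of admissible covers in the definitions of $\cn{g}{\calG}{}$ and $\cn{u}{\calG}{}$ coincide, so the two numbers agree on every host graph and any constant binding function works. The substance of the theorem lies in the second statement, and my plan is to prove both directions by reducing to the single observation that membership in $\union{\calG}$ forces a trivial union cover.

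For the forward direction, I exploit that if $J \in \union{\calG} \cap \calH$, then the identity is already a $1$-global injective $\union{\calG}$-cover of $J$, so $\cn{u}{\calG}{J} = 1$. Given a binding function $f$ witnessing that $\calH$ is $(\cn{g}{\calG}{},\cn{u}{\calG}{})$-bounded, this immediately yields $\cn{g}{\calG}{J} \leq f(1)$ uniformly over such $J$, bounding the advertised maximum by $f(1) < \infty$. Note that this direction does not use the monotonicity of $\calH$.

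For the backward direction, set $M := \max\{\cn{g}{\calG}{J} \setmid J \in \union{\calG} \cap \calH\}$ and take an arbitrary $H \in \calH$ with $t := \cn{u}{\calG}{H}$. Unpacking the definition yields a $t$-global injective $\union{\calG}$-cover $\varphi \colon G_1 \cupdot \cdots \cupdot G_t \to H$; the injectivity means that each image $H_i := \varphi(G_i)$ is a subgraph of $H$ isomorphic to $G_i \in \union{\calG}$, and the edge sets of $H_1,\ldots,H_t$ together cover $E(H)$. Here is the one place the hypothesis on $\calH$ is used: monotonicity of $\calH$ gives $H_i \in \calH$, hence $H_i \in \union{\calG} \cap \calH$, so $\cn{g}{\calG}{H_i} \leq M$. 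Concatenating the $M$-global injective $\calG$-covers of the individual $H_i$ produces a $tM$-global injective $\calG$-cover of $H$, and therefore $\cn{g}{\calG}{H} \leq M \cdot \cn{u}{\calG}{H}$, giving the linear binding function $f(t) = Mt$.

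The argument is conceptually light; the hard part is really just spotting the right abstraction, namely that $\cn{u}{\calG}{J} = 1$ for $J \in \union{\calG}$ is exactly what makes $\union{\calG} \cap \calH$ the ``obstruction class'' on both sides of the equivalence. I do not foresee any technical obstacle beyond verifying that the concatenation of the individual $\calG$-covers indeed yields a valid injective global cover, which follows because each $H_i$ is itself a subgraph of $H$.
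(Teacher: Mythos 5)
Your proposal is correct and follows essentially the same route as the paper's proof: the forward direction uses that $\cn{u}{\calG}{J}=1$ for $J\in\union{\calG}\cap\calH$ to extract the bound $f(1)$, and the backward direction restricts a minimum union cover to its pieces, invokes monotonicity of $\calH$ to place each piece in $\union{\calG}\cap\calH$, and concatenates the resulting global covers to get the linear binding function $t\mapsto Mt$. No gaps.
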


Also for the cases of $(\cn{u}{\calG}{},\cn{l}{\calG}{})$-boundedness and $(\cn{l}{\calG}{},\cn{f}{\calG}{})$-boundedness, we shall consider structural restrictions on $\calG$ and $\calH$, similar to being union-closed or monotone as in \cref{thm:global-union-introduction}.

\begin{remark*}
    Clearly, if a host class $\calH$ is $(\cn{x}{\calG}{}, \cn{y}{\calG}{})$-bounded, then every subclass $\calH' \subseteq \calH$ is $(\cn{x}{\calG}{}, \cn{y}{\calG}{})$-bounded as well.
    So, for a fixed guest class~$\calG$, the more we restrict the host class~$\calH$, the more likely it is that $\calH$ is $(\cn{x}{\calG}{}, \cn{y}{\calG}{})$-bounded.
    On the other hand, for a fixed host class~$\calH$ and two guest classes $\calG' \subseteq \calG$, there is no immediate relation between $\calH$ being $(\cn{x}{\calG}{}, \cn{y}{\calG}{})$-bounded and $\calH$ being $(\cn{x}{\calG'}{}, \cn{y}{\calG'}{})$-bounded.
    Having fewer guests in $\calG'$ than in $\calG$, we surely have $\cn{x}{\calG'}{H} \geq \cn{x}{\calG}{H}$ and $\cn{y}{\calG'}{H} \geq \cn{y}{\calG}{H}$ for every graph $H$.
    But the effects on the inequalities $\cn{x}{\calG'}{H} \leq f( \cn{y}{\calG'}{H})$ and $\cn{x}{\calG}{H} \leq f( \cn{y}{\calG}{H})$ are unpredictable if we only know that $\calG' \subseteq \calG$.\lipicsEnd
\end{remark*}

Specifically, we consider the following structural restrictions\footnote{Definitions are given below.} on $\calG$ and $\calH$.

\medskip

\noindent
\begin{minipage}[t]{0.52\textwidth}
    For the \underline{\smash{guest class $\calG$}} we consider:
    \smallskip
    \begin{enumerate}[(1)]
        \item $\calG$ is any graph class
        \item $\calG$ is component-closed
        \item $\calG$ is hereditary and \label{item:G_her}
        \begin{enumerate}[(i)]
            \item has no further restrictions
            \item has bounded chromatic number
            \item\label{item:G_her_mad} has bounded maximum average degree
        \end{enumerate}
        \item $\calG$ is monotone and\label{item:G_mon}
        \begin{enumerate}[(i)]
            \item\label{item:G_mon_none} has no further restrictions
            \item\label{item:G_mon_chi} has bounded chromatic number
            \item\label{item:G_mon_mad} has bounded maximum average degree
        \end{enumerate}
    \end{enumerate}
\end{minipage}
\hfill
\begin{minipage}[t]{0.43\textwidth}
    \vspace{-3pt}
    
    \includegraphics[page=1]{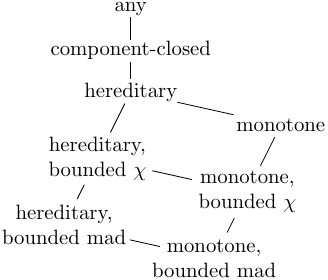}
\end{minipage}

\vspace{2em}

\noindent
\begin{minipage}[t]{0.6\textwidth}
    For the \underline{host class $\calH$} we consider:
    \smallskip
    \begin{enumerate}[(1)]
        \item $\calH$ is any graph class
        \item $\calH$ has bounded chromatic number
        \item $\calH$ has bounded maximum average degree
        \item $\calH$ is $M$-minor-free
        \item $\calH$ has bounded treewidth
    \end{enumerate}
\end{minipage}
\hfill
\begin{minipage}[t]{0.39\textwidth}
    \vspace{-15pt}
    
    \includegraphics[page=2]{restrictions.pdf}
\end{minipage}

\vspace{1.5em}

Recall that a graph class $\calG$ is monotone if for every graph $G \in \calG$ also every subgraph of $G$ is in $\calG$.
A subgraph~$G'$ of a graph~$G$ is an \emph{induced} subgraph of~$G$ if for each pair~$u,v \in V(G')$ with~$uv\in E(G)$, we have $uv\in E(G')$.
Then, $\calG$ is \emph{hereditary} if for every graph $G \in \calG$ also every \emph{induced} subgraph of $G$ is in $\calG$.
Further, $\calG$ is \emph{component-closed} if for every graph $G \in \calG$ also every connected component of $G$ is in $\calG$.
Finally for a graph $M$, $\calG$ is \emph{$M$-minor-free} if no graph in $\calG$ contains $M$ as a minor.

For a graph $G$ we denote its \emph{chromatic number} by $\chi(G)$, its \emph{treewidth} by $\tw(G)$, and its \emph{maximum average degree} (mad for short) by 
\[
    \mad(G) = \max\left\{ \frac{2\abs{E(G')}}{\abs{V(G')}} \setmid G' \subseteq G, \abs{V(G')} > 0 \right\}.
\]
For a graph class $\calG$ and a graph parameter $p \colon \calG \to \R_{\geq 0}$ we put $p(\calG) = \sup\{ p(G) \setmid G \in \calG\}$.
We then say that $\calG$ has \emph{bounded $p$} if $p(\calG) < \infty$.
For example, if $\calG$ has bounded chromatic number, then there exists a constant $C$ such that for all graphs $G \in \calG$ we have $\chi(G) \leq C$, i.e., all graphs in $\calG$ are $C$-colorable.
With this (standard) terminology, observe that a class $\calH$ is $(\cn{x}{\calG}{},\cn{y}{\calG}{})$-bounded if and only if every subclass $\calH' \subseteq \calH$ with bounded $\cn{y}{\calG}{}$ also has bounded $\cn{x}{\calG}{}$.

In our lists of structural restrictions for $\calG$ and $\calH$ above, some properties are more restrictive than others.
For example, \eqref{item:G_mon} implies \eqref{item:G_her}, \eqref{item:G_mon_mad} implies \eqref{item:G_mon_chi}, while on the other hand \eqref{item:G_mon_none} does not imply \eqref{item:G_her_mad}.
The poset of all relations is given next to the respective list.
Roughly speaking, we consider gradually more restricted graph classes as we go down in the lists.
Hence, as we go down in the lists, it becomes more likely that $\calH$ is $(\cn{u}{\calG}{},\cn{l}{\calG}{})$-bounded or $(\cn{l}{\calG}{},\cn{f}{\calG}{})$-bounded.
We prove the following results, which are summarized in \cref{fig:overview}.

\begin{table}[ht]
    \ra{1.3}
    \def\smallDist{3}
    \def\largeDist{8}
    \small
    \centering
    \begin{subtable}[t]{\textwidth}
        \centering
        \begin{tabularx}{0.83\textwidth}{c c@{\extracolsep{0pt}} @{\extracolsep{\smallDist pt}}c@{\extracolsep{0cm}} @{\extracolsep{\largeDist pt}}c@{\extracolsep{0cm}} @{\extracolsep{\smallDist pt}}c@{\extracolsep{0cm}} @{\extracolsep{\smallDist pt}}c@{\extracolsep{0cm}} @{\extracolsep{\largeDist pt}}c@{\extracolsep{0cm}} @{\extracolsep{\smallDist pt}}c@{\extracolsep{0cm}} @{\extracolsep{\smallDist pt}}c@{\extracolsep{0cm}}}
            \toprule
            \multirow{2}{*}{\diagbox[width=6em]{\phantom{vu}$\calH$}{$\calG$}} & \multirow{2}{*}{any} & \multirow{2}{*}{cc} & \multicolumn{3}{c}{hereditary} & \multicolumn{3}{c}{monotone} \\
            \arrayrulecolor{black}\cmidrule(r){4-6} \cmidrule(l){7-9}
            &   &    & \multicolumn{1}{c}{any} & $\chi$ & $\mad$ & \multicolumn{1}{c}{any} & $\chi$ & \multicolumn{1}{c}{$\mad$} \\
            \cmidrule{2-9}
            \arrayrulecolor{white}
            any & \multicolumn{1}{|c|}{\cellcolor{red30}\phantom{\shortref{lem:sep_folded-local_G_cc_H_tw}}} & \multicolumn{1}{|c|}{\cellcolor{red30}\phantom{\shortref{lem:sep_folded-local_G_cc_H_tw}}} & \multicolumn{1}{|c|}{\cellcolor{red30}} & \multicolumn{1}{|c|}{\cellcolor{red30}\phantom{\shortref{thm:union_bounded_by_folded_G_her_mad}}} & \multicolumn{1}{|c|}{\cellcolor{green30}\shortref{thm:union_bounded_by_folded_G_her_mad}} & \multicolumn{1}{|c|}{\cellcolor{red30}} & \multicolumn{1}{|c|}{\cellcolor{red30}\shortref{lem:sep_local_union_H_mon_G_mon_chi}} & \multicolumn{1}{|c|}{\cellcolor{green30}\phantom{\shortref{thm:union_bounded_by_folded_G_her_mad}}} \\
            \cmidrule{2-9}
            bounded $\chi$ & \multicolumn{1}{|c|}{\cellcolor{red30}} & \multicolumn{1}{|c|}{\cellcolor{red30}} & \multicolumn{1}{|c|}{\cellcolor{green30}\shortref{thm:union_bounded_by_folded_G_hereditary_H_chi}} & \multicolumn{1}{|c|}{\cellcolor{green30}} & \multicolumn{1}{|c}{\cellcolor{green30}} & \multicolumn{1}{|c|}{\cellcolor{green30}\smallshortref{thm:union_bounded_by_folded_G_monotone_H_chi}} & \multicolumn{1}{|c|}{\cellcolor{green30}} & \multicolumn{1}{|c|}{\cellcolor{green30}}\\
             \cmidrule{2-9}
            bounded $\mad$ & \multicolumn{1}{|c|}{\cellcolor{red30}} & \multicolumn{1}{|c}{\cellcolor{red30}} & \multicolumn{1}{|c|}{\cellcolor{green30}} & \multicolumn{1}{|c|}{\cellcolor{green30}} & \multicolumn{1}{|c}{\cellcolor{green30}} & \multicolumn{1}{|c|}{\cellcolor{green30}} & \multicolumn{1}{|c|}{\cellcolor{green30}} & \multicolumn{1}{|c|}{\cellcolor{green30}}\\
             \cmidrule{2-9}
            $M$-minor-free & \multicolumn{1}{|c|}{\cellcolor{red30}} & \multicolumn{1}{|c}{\cellcolor{red30}\shortref{thm:minor_comp_sep}} & \multicolumn{1}{|c|}{\cellcolor{green30}} & \multicolumn{1}{|c|}{\cellcolor{green30}} & \multicolumn{1}{|c}{\cellcolor{green30}} & \multicolumn{1}{|c|}{\cellcolor{green30}} & \multicolumn{1}{|c|}{\cellcolor{green30}} & \multicolumn{1}{|c|}{\cellcolor{green30}}\\
             \cmidrule{2-9}
            bounded $\tw$ & \multicolumn{1}{|c|}{\cellcolor{red30}\shortref{thm:tw_sep}} & \multicolumn{1}{|c|}{\cellcolor{green30}\shortref{thm:cc_tw_bound}}& \multicolumn{1}{|c|}{\cellcolor{green30}} & \multicolumn{1}{|c|}{\cellcolor{green30}} & \multicolumn{1}{|c|}{\cellcolor{green30}} & \multicolumn{1}{|c|}{\cellcolor{green30}} & \multicolumn{1}{|c|}{\cellcolor{green30}} & \multicolumn{1}{|c|}{\cellcolor{green30}}\\
            \arrayrulecolor{black}\bottomrule
        \end{tabularx}
        
        \medskip
        
        \caption{Is $\calH$ $(\cn{u}{\calG}{},\cn{l}{\calG}{})$-bounded? \;\; \textcolor{green30}{$\blacksquare$} Always Yes \;\; \textcolor{red30}{$\blacksquare$} Sometimes No}
        \label{fig:union-local}
    \end{subtable}
    
    \begin{subtable}[t]{\textwidth}
        \centering
        \begin{tabularx}{0.83\textwidth}{c c@{\extracolsep{0pt}} @{\extracolsep{\smallDist pt}}c@{\extracolsep{0cm}} @{\extracolsep{\largeDist pt}}c@{\extracolsep{0cm}} @{\extracolsep{\smallDist pt}}c@{\extracolsep{0cm}} @{\extracolsep{\smallDist pt}}c@{\extracolsep{0cm}} @{\extracolsep{\largeDist pt}}c@{\extracolsep{0cm}} @{\extracolsep{\smallDist pt}}c@{\extracolsep{0cm}} @{\extracolsep{\smallDist pt}}c@{\extracolsep{0cm}}}
            \toprule
            \multirow{2}{*}{\diagbox[width=6em]{\phantom{vu}$\calH$}{$\calG$}} & \multirow{2}{*}{any} & \multirow{2}{*}{cc} & \multicolumn{3}{c}{hereditary} & \multicolumn{3}{c}{monotone} \\
            \arrayrulecolor{black}\cmidrule(r){4-6} \cmidrule(l){7-9}
            &   &    & \multicolumn{1}{c}{any} & $\chi$ & $\mad$ & \multicolumn{1}{c}{any} & $\chi$ & \multicolumn{1}{c}{$\mad$} \\
            \cmidrule{2-9}
            \arrayrulecolor{white}
            any & \multicolumn{1}{|c|}{\cellcolor{red30}\phantom{\shortref{lem:sep_folded-local_G_cc_H_tw}}} & \multicolumn{1}{|c|}{\cellcolor{red30}} & \multicolumn{1}{|c|}{\cellcolor{red30}} & \multicolumn{1}{|c|}{\cellcolor{red30}\phantom{\shortref{thm:union_bounded_by_folded_G_her_mad}}} & \multicolumn{1}{|c|}{\cellcolor{green30}\shortref{thm:union_bounded_by_folded_G_her_mad}} & \multicolumn{1}{|c|}{\cellcolor{red30}} &  \multicolumn{1}{|c|}{\cellcolor{red30}\shortref{lem:sep_folded-folded_G_mon_chi_H_mon}} & \multicolumn{1}{|c|}{\cellcolor{green30}\phantom{\shortref{thm:union_bounded_by_folded_G_her_mad}}} \\
            \cmidrule{2-9}
            bounded $\chi$ & \multicolumn{1}{|c|}{\cellcolor{red30}} & \multicolumn{1}{|c}{\cellcolor{red30}} & \multicolumn{1}{|c|}{\cellcolor{green30}\shortref{thm:union_bounded_by_folded_G_hereditary_H_chi}} & \multicolumn{1}{|c|}{\cellcolor{green30}} & \multicolumn{1}{|c}{\cellcolor{green30}} & \multicolumn{1}{|c|}{\cellcolor{green30}\smallshortref{thm:union_bounded_by_folded_G_monotone_H_chi}} & \multicolumn{1}{|c|}{\cellcolor{green30}} & \multicolumn{1}{|c|}{\cellcolor{green30}}\\
             \cmidrule{2-9}
            bounded $\mad$ & \multicolumn{1}{|c|}{\cellcolor{red30}} & \multicolumn{1}{|c}{\cellcolor{red30}} & \multicolumn{1}{|c|}{\cellcolor{green30}} & \multicolumn{1}{|c|}{\cellcolor{green30}} & \multicolumn{1}{|c}{\cellcolor{green30}} & \multicolumn{1}{|c|}{\cellcolor{green30}} & \multicolumn{1}{|c|}{\cellcolor{green30}} & \multicolumn{1}{|c|}{\cellcolor{green30}}\\
             \cmidrule{2-9}
            $M$-minor-free & \multicolumn{1}{|c}{\cellcolor{red30}} & \multicolumn{1}{|c}{\cellcolor{red30}} & \multicolumn{1}{|c|}{\cellcolor{green30}} & \multicolumn{1}{|c|}{\cellcolor{green30}} & \multicolumn{1}{|c}{\cellcolor{green30}} & \multicolumn{1}{|c|}{\cellcolor{green30}} & \multicolumn{1}{|c|}{\cellcolor{green30}} & \multicolumn{1}{|c|}{\cellcolor{green30}}\\
             \cmidrule{2-9}
            bounded $\tw$ & \multicolumn{1}{|c|}{\cellcolor{red30}} & \multicolumn{1}{|c|}{\cellcolor{red30}\shortref{lem:sep_folded-local_G_cc_H_tw}}& \multicolumn{1}{|c|}{\cellcolor{green30}} & \multicolumn{1}{|c|}{\cellcolor{green30}} & \multicolumn{1}{|c|}{\cellcolor{green30}} & \multicolumn{1}{|c|}{\cellcolor{green30}} & \multicolumn{1}{|c|}{\cellcolor{green30}} & \multicolumn{1}{|c|}{\cellcolor{green30}}\\
            \arrayrulecolor{black}\bottomrule
        \end{tabularx}
        
        \medskip
        
        \caption{Is $\calH$ $(\cn{l}{\calG}{},\cn{f}{\calG}{})$-bounded? \;\; \textcolor{green30}{$\blacksquare$} Always Yes \;\; \textcolor{red30}{$\blacksquare$} Sometimes No}
        \label{fig:local-folded}
    \end{subtable}
    \caption{
        Overview of the results in \cref{thm:union-local-introduction,thm:local-folded-introduction}.
        \cref{fig:union-local} shows whether every host class~$\calH$ with a specific restriction (any, bounded $\chi$, bounded $\mad$, $M$-minor-free, bounded $\tw$) is $(\cn{u}{\calG}{},\cn{l}{\calG}{})$-bounded for every guest class $\calG$ with a specific restriction (any, component-closed, hereditary, monotone, bounded $\chi$, bounded $\mad$).
        Numbers $\langle X \rangle$ refer to the corresponding Theorem~$X$ in the paper, while all unnumbered results are immediate consequences.
        \cref{fig:local-folded} shows the analogous results for $(\cn{l}{\calG}{},\cn{f}{\calG}{})$-boundedness.
    }
    \label{fig:overview}
\end{table}

\begin{theorem}[union vs.\ local]
    \label{thm:union-local-introduction}{\ \\}
    For any two graph classes $\calG,\calH$ we have that $\calH$ is $(\cn{u}{\calG}{},\cn{l}{\calG}{})$-bounded, provided one of the following holds:
    \begin{enumerate}
        \item $\calG$ is hereditary and has bounded maximum average degree.
        \item $\calG$ is hereditary and $\calH$ has bounded chromatic number.
        \item $\calG$ is component-closed and $\calH$ has bounded treewidth.
    \end{enumerate}
    There exist two graph classes $\calG,\calH$ such that $\calH$ is \emph{not} $(\cn{u}{\calG}{},\cn{l}{\calG}{})$-bounded in each of the following cases:
    \begin{enumerate}
        \item $\calG$ is monotone and has bounded chromatic number.
        \item $\calG$ is component-closed and $\calH$ is $M$-minor-free.
        \item $\calH$ has bounded treewidth.
    \end{enumerate}    
\end{theorem}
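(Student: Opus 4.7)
The statement bundles six independent claims -- three binding results and three separations -- so the proof splits accordingly. The plan for the positive direction is a common reduction: starting from an $s$-local injective $\calG$-cover $\varphi\colon G_1\cupdot\cdots\cupdot G_t\to H$, form the \emph{conflict graph} $\calI$ on the components $G_1,\dots,G_t$ with $G_i\sim G_j$ iff $\varphi(V(G_i))\cap\varphi(V(G_j))\neq\emptyset$. A proper coloring of $\calI$ with $q$ colors regroups the $G_i$'s into $q$ families with pairwise vertex-disjoint images, each family therefore lying in $\union\calG$; hence $\cn{u}{\calG}{H}\le\chi(\calI)$. For every $v\in V(H)$ the $G_i$'s hitting $v$ form a clique of size at most $s$ in $\calI$, so the task in each positive case reduces to bounding $\chi(\calI)$ by a function of this clique number $s$ under the relevant hypothesis.

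To close the gap in the three positive cases I plan the following. For $\calG$ hereditary with $\mad(\calG)\le d$, every $G\in\calG$ is $d$-degenerate; I would use this degeneracy ordering to process the components of the cover iteratively and greedily color $\calI$ with only $f(s,d)$ colors. For $\calG$ hereditary with $\chi(\calH)\le c$, a proper $c$-coloring $\pi$ of $H$ pulls back via $\varphi$ to a proper coloring of each $G_i$; by heredity the induced subgraph of $G_i$ on any two $\pi$-color classes still lies in $\calG$, so the cover refines into $\binom{c}{2}$ bipartite sub-covers and each color pair can be handled separately. For $\calG$ component-closed and $\tw(\calH)\le k$, I may assume each $G_i$ is connected, so $\{\varphi(G_i)\}$ is a family of connected subgraphs of $H$; a Helly-type analysis on a tree decomposition of $H$ shows that the intersection graph of connected subgraphs of a graph of treewidth $k$ is $\chi$-bounded by some $g(k,\omega)$ of its clique number, giving $\chi(\calI)\le g(k,s)$.

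For each separation I plan to exhibit a family $(H_n)_{n\in\N}\subseteq\calH$ with $\cn{l}{\calG}{H_n}$ bounded but $\cn{u}{\calG}{H_n}\to\infty$. For~(1), with $\calG$ monotone and $\chi$-bounded, the natural candidate is to take $\calG$ to be the class of bipartite graphs and choose hosts whose local biclique cover number is constant while their biclique cover number grows; for~(2), with $\calG$ component-closed and $\calH$ minor-free, I expect a family of planar (hence $K_5$-minor-free) hosts paired with a component-closed class of long cycles or long paths; for~(3), with $\calH$ of bounded treewidth, even tree-like hosts should separate local from union once $\calG$ is suitably tailored.

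The most delicate step throughout will be certifying the lower bounds $\cn{u}{\calG}{H_n}\to\infty$ in the separations: a small local cover typically exists for trivial structural reasons, but ruling out every union cover using few members of $\union\calG$ requires a dedicated extremal or counting argument tailored to each construction. On the positive side, the treewidth case is the most demanding, as it rests on the intersection-graph $\chi$-boundedness for connected subgraphs of bounded-treewidth graphs, which I regard as the technical heart of~\cref{thm:cc_tw_bound}.
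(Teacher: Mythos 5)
Your unified reduction for the positive direction rests on a false premise. You claim that the conflict graph $\calI$ of the guests has clique number at most $s$ because ``for every $v\in V(H)$ the $G_i$'s hitting $v$ form a clique of size at most $s$''. But cliques in $\calI$ need not be realized at a single host vertex: a family of guests can be pairwise intersecting without any common vertex, so $\omega(\calI)$ is not bounded in terms of $s$. The paper's own separating constructions (\cref{thm:tw_sep,thm:minor_comp_sep}) are explicit witnesses: there one has $t$ pairwise intersecting guests under a $2$-local cover, so $\omega(\calI)=t$ while $s=2$. The only case where your plan survives is case~3 (bounded treewidth), because there the paper passes to the guest-subtrees of a tree decomposition, and the Helly property of subtrees guarantees that every clique of that intersection graph \emph{is} realized at a single tree node, whose bag contributes at most $(w+1)s$ guests; chordality then gives $\chi=\omega\le(w+1)s$. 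This is essentially \cref{thm:cc_tw_bound} and your sketch for this case is on target. For cases~1 and~2, however, the paper does not color a conflict graph at all: it decomposes the host into few weak induced star forests (using $\mad(H)\le s\,d$, which follows from $\mad(\calG)\le d$ and $s$-locality, together with the Axenovich--D\"orr--Rollin--Ueckerdt decomposition, \cref{lem:mad_bounded_split_into_induced_star_forests}) respectively into $\binom{k}{2}$ induced bipartite subgraphs, and then proves directly that union and local covering numbers agree on stars (every guest with an edge must contain the center) and that $\cn{u}{\calG}{B}\le s^2$ on bipartite hosts $B$ via the index-splitting $G_{i,j}=G[X_i\cup Y_j]$ of \cref{lem:union_bounded_by_folded_for_bipartite_host_hereditary_guest}. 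Your case-2 sketch gets the bipartite refinement right but then says the color pairs ``can be handled separately'', which is exactly the step that needs the $s^2$ argument; your case-1 sketch (greedy coloring of $\calI$ via degeneracy of the guests) has no mechanism to control the large cliques of $\calI$ and I do not see how to complete it.

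On the separations, your instincts are correct but the content is missing. For~(1) the right guest class is indeed the bipartite graphs, but the whole difficulty is naming a host family with bounded local bipartite cover number and unbounded chromatic number; the paper uses shift graphs, where $\cn{l}{\calB}{\shift(D)}\le 2$ via the bicliques $B_v$ at each vertex $v$ of $D$, while $\cn{u}{\calB}{H}=\lceil\log\chi(H)\rceil$ is unbounded by Lov\'asz's lower bound on $\chi(\shift(D))$. For~(2) and~(3) the paper builds gadgeted guests (paths or disjoint unions of paths decorated with stars, triangles and long cycles) so that each guest has a \emph{unique} injective placement in the host and all placements pairwise intersect, which is precisely the dedicated extremal argument you correctly flag as the delicate part but do not supply. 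Note also that in~(3) the guest class must \emph{not} be component-closed (the paper uses guests that are disjoint unions of $t-1$ gadget copies), since otherwise \cref{thm:cc_tw_bound} would forbid the separation; ``suitably tailored'' hides this essential constraint.
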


\begin{theorem}[local vs.\ folded]
    \label{thm:local-folded-introduction}{\ \\}
    For any two graph classes $\calG,\calH$ we have that $\calH$ is $(\cn{l}{\calG}{},\cn{f}{\calG}{})$-bounded, provided one of the following holds:
    \begin{enumerate}
        \item $\calG$ is hereditary and has bounded maximum average degree.
        \item $\calG$ is hereditary and~$\calH$ has bounded chromatic number.
    \end{enumerate}
    There exist two graph classes $\calG,\calH$ such that $\calH$ is \emph{not} $(\cn{l}{\calG}{},\cn{f}{\calG}{})$-bounded in each of the following cases:
    \begin{enumerate}
        \item $\calG$ is monotone and has bounded chromatic number.\label{item:local-folded-3}
        \item $\calG$ is component-closed and $\calH$ has bounded treewidth.\label{item:local-folded-4}
    \end{enumerate}    
\end{theorem}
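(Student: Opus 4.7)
The plan is to handle the two positive items (boundedness) and the two negative items (separation) independently. For the positive direction, my key observation is that the chain~\eqref{eq:gulf-chain} gives $\cn{l}{\calG}{H} \leq \cn{u}{\calG}{H}$ for every host graph $H$ and every guest class $\calG$, so any binding function witnessing $(\cn{u}{\calG}{},\cn{f}{\calG}{})$-boundedness of $\calH$ is automatically also a binding function for $(\cn{l}{\calG}{},\cn{f}{\calG}{})$-boundedness. Both positive items then reduce to already-established theorems: \cref{thm:union_bounded_by_folded_G_her_mad} supplies the $(\cn{u}{\calG}{},\cn{f}{\calG}{})$-bound when $\calG$ is hereditary with bounded $\mad$, and \cref{thm:union_bounded_by_folded_G_hereditary_H_chi} supplies it when $\calG$ is hereditary and $\calH$ has bounded chromatic number. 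Composing each of these with $\cn{l}{\calG}{H} \leq \cn{u}{\calG}{H}$ immediately yields the required binding function.

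For the negative direction, I would construct, for each of the two cases, a guest class $\calG$, a host class $\calH$, and a sequence $H_1, H_2, \ldots \in \calH$ for which $\cn{f}{\calG}{H_n}$ stays bounded as $n \to \infty$ while $\cn{l}{\calG}{H_n}$ tends to infinity. For \cref{item:local-folded-3}, where $\calG$ is monotone with bounded chromatic number, I would invoke \cref{lem:sep_folded-folded_G_mon_chi_H_mon}; the guiding intuition is that folding can cheaply realize ``hard'' hosts by identifying non-adjacent vertices of a low-chromatic guest, while any injective cover of the same hosts is forced to pile many guests onto a single host vertex. For \cref{item:local-folded-4}, where $\calG$ is component-closed and $\calH$ has bounded treewidth, I would invoke \cref{lem:sep_folded-local_G_cc_H_tw}, which produces the same flavor of separation even under the much stronger restriction on $\calH$.

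The main obstacle lies entirely on the negative side: the positive half is a free corollary of earlier $(\cn{u}{\calG}{},\cn{f}{\calG}{})$-bounds combined with~\eqref{eq:gulf-chain}. For the separations, the upper bound on $\cn{f}{\calG}{H_n}$ is usually straightforward, as it only asks for an explicit folded cover, but the matching lower bound on $\cn{l}{\calG}{H_n}$ is the delicate step: it must rule out every injective cover using few guests per vertex, typically through a counting or averaging argument tailored to the combinatorial structure of the specific host family. Finding a single family $H_n$ that simultaneously admits a small folded cover and provably resists every small injective cover is thus where the real work of the theorem sits, and it is deferred to the two separation lemmas.
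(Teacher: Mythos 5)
Your proposal is correct and follows essentially the same route as the paper: the positive items are obtained by composing the $(\cn{u}{\calG}{},\cn{f}{\calG}{})$-bounds of \cref{thm:union_bounded_by_folded_G_her_mad} and \cref{thm:union_bounded_by_folded_G_hereditary_H_chi} with the chain inequality $\cn{l}{\calG}{H} \leq \cn{u}{\calG}{H}$, and the negative items are exactly \cref{lem:sep_folded-folded_G_mon_chi_H_mon} and \cref{lem:sep_folded-local_G_cc_H_tw}, which is precisely how the paper assembles this theorem.
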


\subparagraph*{Organization of the paper.}

We start by proving \cref{thm:global-union-introduction} in \cref{sec:global-union-separation}, which fully treats $(\cn{g}{\calG}{},\cn{u}{\calG}{})$-boundedness.
The subsequent sections are then devoted to $(\cn{u}{\calG}{},\cn{l}{\calG}{})$-boundedness and $(\cn{l}{\calG}{},\cn{f}{\calG}{})$-boundedness.
We consider hereditary and non-hereditary guest classes separately.

In \cref{sec:non-hereditary-guest-classes} we consider guest classes $\calG$ that are not necessarily hereditary.
We prove all results corresponding to the first two columns of \cref{fig:union-local,fig:local-folded} here.
We find that whether or not a host class $\calH$ is $(\cn{u}{\calG}{},\cn{l}{\calG}{})$-bounded (similarly for $(\cn{l}{\calG}{},\cn{f}{\calG}{})$-boundedness) mostly depends on whether $\calH$ has bounded treewidth and/or $\calG$ is component-closed.

In \cref{sec:hereditary-guest-classes} we then consider hereditary guest classes~$\calG$.
We start in \cref{subsec:hereditary-sparse-guests} with the case that $\calG$ has additionally bounded $\mad$, in which case $\calG$ is also called \emph{sparse}.
In this case, every host class~$\calH$ turns out to be $(\cn{u}{\calG}{},\cn{f}{\calG}{})$-bounded and hence $(\cn{u}{\calG}{},\cn{l}{\calG}{})$-bounded as well as $(\cn{l}{\calG}{},\cn{f}{\calG}{})$-bounded; see \cref{thm:union_bounded_by_folded_G_her_mad}.
Similarly, if $\calG$ is hereditary but not necessarily sparse, then every host class~$\calH$ of bounded chromatic number is $(\cn{u}{\calG}{},\cn{f}{\calG}{})$-bounded; see \cref{thm:union_bounded_by_folded_G_hereditary_H_chi} in \cref{subsec:hereditary-guests-chromatic-hosts}.
The last case is that $\calG$ is hereditary but $\calH$ has unbounded chromatic number.
This is the subject of \cref{subsec:G-her-H-unbounded-X}.
Here we can (for some guest and host classes) separate the union and local $\calG$-covering number (\cref{subsec:G-her-union-local-separation}), as well as the local and folded $\calG$-covering number (\cref{subsec:G-her-local-folded-separation}).
This means that we provide a particular hereditary guest class $\calG$ and a particular host class $\calH$ that is not $(\cn{u}{\calG}{},\cn{l}{\calG}{})$-bounded, respectively not $(\cn{l}{\calG}{},\cn{f}{\calG}{})$-bounded.

Finally, in \cref{sec:hereditary_hosts} we discuss a similar, yet different, concept to $(\cn{x}{\calG}{},\cn{y}{\calG}{})$-boundedness.
Namely, we ask whether (for a given guest class~$\calG$) a host class $\calH$ with bounded $\cn{y}{\calG}{}$ also has bounded $\cn{x}{\calG}{}$.
If yes, this implies that $\calH$ is $(\cn{x}{\calG}{},\cn{y}{\calG}{})$-bounded, but the converse is not necessarily true.
For example, we show that if $\calG$ is component-closed and $\calH$ is hereditary with bounded $\mad$, then $\cn{l}{\calG}{\calH} < \infty$ implies $\cn{u}{\calG}{\calH} < \infty$.
(Under the same conditions, $\calH$ is not always $(\cn{u}{\calG}{},\cn{l}{\calG}{})$-bounded; cf.~\cref{thm:minor_comp_sep}.)

We conclude the paper with a discussion of open problems in \cref{sec:conclusions}.

\section{Global and Union Covering Number}
\label{sec:global-union-separation}

The global $\calG$-covering number $\cn{g}{\calG}{H}$ of a graph $H$ is the minimum $t$ such that $H$ can be covered with (equivalently, is the union of) $t$ graphs from $\calG$.
For the union $\calG$-covering number $\cn{u}{\calG}{H}$ we are allowed to cover $H$ with $t$ vertex-disjoint unions of graphs from $\calG$.
Hence, whenever the guest class $\calG$ is union-closed, the global and union $\calG$-covering number of every graph $H$ coincide.
Most $\calG$-covering numbers that are studied in the literature are based on union-closed guest classes $\calG$ (like forests, outerplanar or planar graphs, interval graphs, etc.).
For such guest classes~$\calG$, every host class~$\calH$ is $(\cn{g}{\calG}{},\cn{u}{\calG}{})$-bounded, which is the first statement of \cref{thm:global-union-introduction}.

Yet, other guest classes~$\calG$ (like the class of co-interval graphs, or all $T$-free graphs for a disconnected $T$) are not union-closed, and hence make the consideration of $(\cn{g}{\calG}{},\cn{u}{\calG}{})$-boundedness of host classes $\calH$ non-trivial and interesting.
Here, we give a characterization for the case that $\calH$ is monotone.

\thmglobalunion*
\begin{proof}
    To prove the second statement, first assume that~$\calH$ is $(\cn{g}{\calG}{},\cn{u}{\calG}{})$-bounded, i.e. there exists a function~$f\colon \N \to \N$ such that for every~$H \in \calH$, we have $\cn{g}{\calG}{H} \leq f(\cn{u}{\calG}{H})$.
    In particular, every graph $J \in \union{\calG} \cap \calH$ satisfies $\cn{g}{\calG}{J} \leq f(\cn{u}{\calG}{J}) \leq f(1)$.
    That is, $\cn{g}{\calG}{\union{\calG} \cap \calH} < \infty$.

    Now suppose there exists a constant~$s$ such that $\cn{g}{\calG}{\union{\calG} \cap \calH} \leq s$. 
    We show that $\cn{g}{\calG}{H} \leq s \cn{u}{\calG}{H}$ for every graph~$H \in \calH$.
    It then follows that~$\calH$ is  $(\cn{g}{\calG}{},\cn{u}{\calG}{})$-bounded.
    
    Let~$H \in \calH$ and consider an injective $t$-global $\union{\calG}$-cover~$\varphi'\colon J_1 \cupdot \dots \cupdot J_t \to H$ with $t = \cn{u}{\calG}{H}$.
    Each~$J_i$ is in particular a subgraph of~$H$ and as~$\calH$ is monotone, $J_i \in \union{\calG} \cap \calH$ follows.
    Since $\cn{g}{\calG}{\union{\calG} \cap \calH} \leq s$, for each graph~$J_i$ there exists an injective $s$-global $\calG$-cover $\varphi_i \colon G_{i,1} \cupdot \dots \cupdot G_{i,s} \to J_i$.
    We obtain a homomorphism $\varphi \colon G_{1,1} \cupdot \dots \cupdot G_{t,s} \to H$ by setting $\varphi|_{G_{i,j}}= \varphi_i|_{G_{i,j}}$ for each $i \in [t], j \in [s]$.
    Note that~$\varphi$ is an injective $st$-global $\calG$-cover of~$H$, which certifies $\cn{g}{\calG}{H} \leq s \cn{u}{\calG}{H}$.
\end{proof}

With \cref{thm:global-union-introduction} at hand, it is easy to find a guest class $\calG$ and a host class $\calH$ that is not $(\cn{g}{\calG}{},\cn{u}{\calG}{})$-bounded, even though $\calG$ and $\calH$ are structurally very well-behaved.
For example, consider~$\calG$ to be all graphs on at most two vertices (that is, $\calG = \{K_2,K_1,2K_1\}$) and~$\calH$ to be all graphs of maximum degree at most~$1$ (that is, $\calH = \{aK_2 + bK_1 \mid a,b \geq 0\}$).
Then $\calH$ is not $(\cn{g}{\calG}{},\cn{u}{\calG}{})$-bounded, even though~$\calG$ and~$\calH$ are both of bounded treewidth (hence bounded $\mad$) and monotone.

\section{Non-Hereditary Guest Classes}
\label{sec:non-hereditary-guest-classes}

Before we investigate hereditary guest classes in detail in \cref{sec:hereditary-guest-classes}, let us consider here guest classes~$\calG$ that are not necessarily hereditary.
In particular, we shall prove our results on $(\cn{u}{\calG}{},\cn{l}{\calG}{})$-boundedness and $(\cn{l}{\calG}{},\cn{f}{\calG}{})$-boundedness corresponding to the first two columns in \cref{fig:union-local,fig:local-folded}.
We start with a negative result.
If we put no additional assumptions on $\calG$, then even for very simple (i.e., structurally well-behaving) host classes~$\calH$ we are not guaranteed to have $(\cn{u}{\calG}{},\cn{l}{\calG}{})$-boundedness.

\begin{theorem}\label{thm:tw_sep}
    There is a host class $\calH$ with $\tw(\calH)= 1$ and a guest class $\calG$ such that $\calH$ is not $(\cn{u}{\calG}{},\cn{l}{\calG}{})$-bounded.
\end{theorem}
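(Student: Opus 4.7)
The plan is to construct a non-component-closed guest class $\calG$ and a sequence of trees $H_n$ (so $\tw(H_n) = 1$) witnessing the failure of $(\cn{u}{\calG}{}, \cn{l}{\calG}{})$-boundedness: $\cn{l}{\calG}{H_n}$ uniformly bounded by a constant $s$ but $\cn{u}{\calG}{H_n} \to \infty$.

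A first structural observation constrains the candidates. The later \cref{thm:cc_tw_bound} will show that component-closed guest classes combined with bounded-treewidth hosts always yield boundedness, so $\calG$ must fail to be component-closed. In addition, trees have the Helly property for subtrees: pairwise-intersecting connected $\calG$-subgraphs of a tree share a common vertex, and then the local and union covering numbers coincide. Hence $\calG$ must also contain \emph{disconnected} graphs. A natural first candidate is $\calG = \{K_2\} \cup \{G_k : k \geq 1\}$ with each $G_k$ disconnected and none of its components in $\calG$, for example $G_k = K_{1,k} \cupdot K_{1,k}$, so a $G_k$-copy in a tree is a pair of vertex-disjoint $k$-stars.

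For the host family $H_n$ I would take a tree containing several ``hub'' vertices of large degree, arranged so that $G_k$-copies embed naturally as pairs of disjoint stars at distinct hubs. The upper bound $\cn{l}{\calG}{H_n} \leq s = O(1)$ is then obtained by an explicit local cover: one pairs hubs into $G_k$-copies (each hub in a single copy, covering its pendant leaves) and then uses $K_2$'s for the remaining non-leaf edges; every vertex lies in only a bounded number of covering subgraphs.

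The delicate part is the lower bound $\cn{u}{\calG}{H_n} \to \infty$. A naive caterpillar-like host admits an efficient pairing of hubs into vertex-disjoint $G_k$-copies which fit in a single $\union{\calG}$-piece, keeping $\cn{u}{\calG}{H_n}$ bounded by a small constant. To block this, one refines the construction: one can allow only heterogeneous disconnected guests such as $G_k = K_{1,k} \cupdot K_{1,k+1}$ (two stars of distinct sizes), and choose $H_n$ whose hub-degree sequence rules out the required matchings, or a tree with an odd/irregular number of hubs of each degree so that some hubs remain unmatched and must be covered by individual pieces. With such a refinement one argues that each $\union{\calG}$-piece carries only $O(1)$ edges incident to the hub region of $H_n$, while the total number of hub-incident edges grows with $n$, forcing $\cn{u}{\calG}{H_n} = \Omega(n)$. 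Combining the explicit bounded cover with this lower bound completes the separation; the main obstacle throughout is ensuring that the pairing obstruction is strong enough to drive $\cn{u}$ to infinity without also forcing $\cn{l}$ to grow.
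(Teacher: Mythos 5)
Your high-level framing is sound and matches the paper's: you correctly deduce from \cref{thm:cc_tw_bound} and the Helly property of subtrees that $\calG$ must contain disconnected guests, you take trees with high-degree hubs as hosts, and you plan to get $\cn{l}{\calG}{H_n} = O(1)$ from one explicit cover while driving $\cn{u}{\calG}{H_n} \to \infty$ through an intersection obstruction. But the lower bound, which you yourself flag as the delicate part, is not actually established, and the specific guests you propose cannot be made to work without substantially more structure. Two things go wrong. First, bare stars embed far too flexibly: a copy of $K_{1,k}$ sits at any vertex of degree at least $k$, so an adversarial cover is free to choose which hubs to pair into a single guest $K_{1,k} \cupdot K_{1,k'}$, and no tweak of the hub-degree sequence rules out a pairing that packs $\Omega(n)$ vertex-disjoint guests into one $\union{\calG}$-piece; your claim that each piece carries only $O(1)$ hub-incident edges is precisely the step that fails. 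Second, even if you could force which guests appear, two-component guests give you no mechanism to make the forced guests pairwise intersect (or otherwise have an intersection graph of unbounded chromatic number), which is what actually prevents grouping them into few vertex-disjoint unions.

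The paper's construction supplies exactly these two missing ingredients. Each connected gadget $\hat{G_i^t}$ is a long path with a large star at each end and a pendant edge attached at position $i$; these markers make the copy of $\hat{G_i^t}$ inside the relevant host component essentially unique, so any cover of the $t$ pendant edges at a degree-$(t+1)$ vertex either uses $t$ copies of $K_2$ all through that vertex (and is already expensive for the union cover) or is forced to use the specific guest indexed by $i$. Crucially, each guest $G_i^t$ is the vertex-disjoint union of $t-1$ copies of $\hat{G_i^t}$ --- one destined for each host component $H_{i,j}^t$ with $j \neq i$ --- so that the unique copies of $G_i^t$ and $G_j^t$ both live partly in $H_{i,j}^t$ and share the identified root there. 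Hence the $t$ forced guests are pairwise intersecting and no two fit into the same $\union{\calG}$-piece, giving $\cn{u}{\calG}{H_t} \geq t$ while the intended cover is $2$-local. To salvage your approach you would need to replace the stars by rigid gadgets with forced embeddings and, crucially, give each guest one component per ``partner'' so that the forced copies form a clique in the intersection graph; as written, the proposal has a genuine gap at the lower bound.
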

\begin{proof}
    For an integer $t \geq 4$ and $i\in[t]$ we define the graph $\hat{G_i^t}$ as a combination of a path $P_{2t+1} = p_0,p_1,\dots,p_{2t}$ on $2t+1$ vertices, two stars $K_{1,t}$ on $t+1$ vertices, and one $K_2$ (on two vertices).
    The central vertex of one $K_{1,t}$ is identified with $p_0$; the central vertex of the other $K_{1,t}$ with $p_{2t}$.
    One vertex of the $K_2$ is identified with the vertex $p_i$ on the path; see Figure~\ref{fig:twSep}.
    Further, let $G_i^t$ be the vertex-disjoint union of $t-1$ copies of $\hat{G_i^t}$.
    We define the guest class $\calG$ as $\calG = \{G_i^t\setmid t \geq 4, i\in [t]\}\cup\{K_2\}$.

    \begin{figure}[ht]
        \centering
        \includegraphics[page=2]{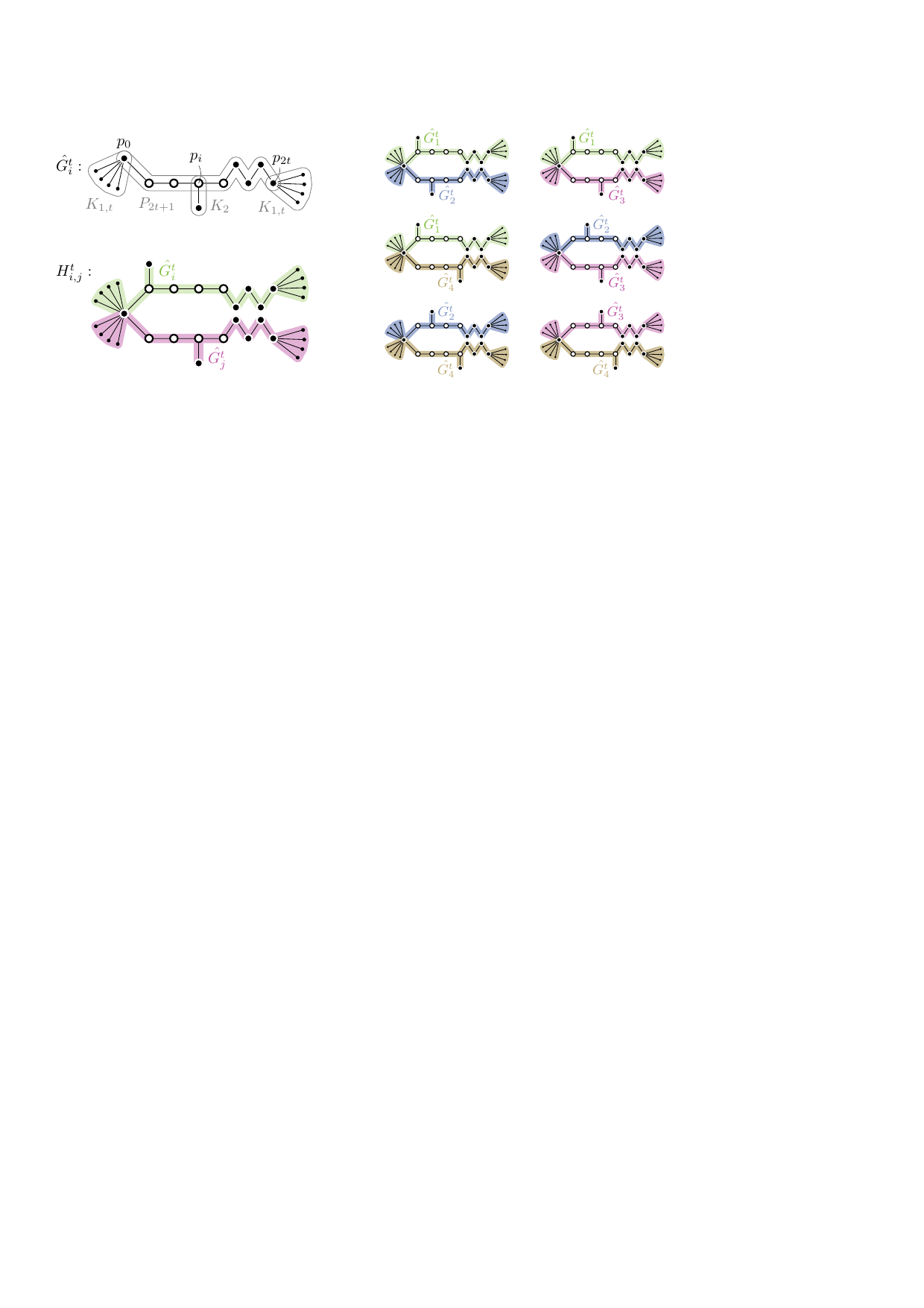}
        \caption{%
            Left: Illustrations of the graphs $\hat{G_i^t}$ for $i=3,t=4$, and $H_{i,j}^t$ for $i=1,j=3,t=4$.
            Right: A $2$-local $t$-global injective $\calG$-cover of $H_t$ for $t=4$.
        }
        \label{fig:twSep}
    \end{figure}

    Let $\calH$ be the class of all graphs with treewidth~$1$, that is, the class of all forests.
    For our argument, we consider graphs~$H_t$ with $t \geq 4$ in $\calH$ defined as follows.
    For an integer $t \geq 4$, let~$H_t$ be the vertex-disjoint union of the graphs $\{H_{i,j}^t\setmid 1 \leq i<j \leq t\}$ where $H_{i,j}^t$ is the graph obtained by identifying the corresponding vertices $p_0$ in $\hat{G_i^t}$ and in $\hat{G_j^t}$; see Figure~\ref{fig:twSep}. 
    Note that $\tw(\calH) = 1$.
    We shall show that $\cn{l}{\calG}{\calH} = 2$, while $\cn{u}{\calG}{\calH} = \infty$, and therefore that $\calH$ is not $(\cn{u}{\calG}{},\cn{l}{\calG}{})$-bounded.
    
    First observe that for every $t\geq 4$ host $H_t$ can be covered by $G_1^t,\ldots,G_t^t$ as follows.
    For each $i,j$ with $1 \leq i < j \leq t$, we cover the component of $H_t$ isomorphic to $H_{i,j}^t$ by a copy of $\hat{G_i^t}$ from $G_i^t$ and a copy of $\hat{G_j^t}$ from $G_j^t$ (corresponding to the definition of $H_{i,j}^t$).
    For each $i \in [t]$, this uses $t-1$ copies of $\hat{G_i^t}$ which can be combined to be one copy of $G_i^t$.
    See \cref{fig:twSep} for an illustration of this $2$-local injective $\calG$-cover of $H_t$.
    Note that each vertex of $H_t$ is indeed contained in at most two guests, i.e., $\cn{l}{\calG}{H_t} \leq 2$.

    To show that $\calH$ is not $(\cn{u}{\calG}{},\cn{l}{\calG}{})$-bounded, it now suffices to show that $\cn{u}{\calG}{H_t} \geq t$ for every $t \geq 4$.
    To this end, consider any injective $\union{\calG}$-cover $\varphi$ of $H_t$, i.e., a cover of $H_t$ with vertex-disjoint unions of graphs in $\calG$.
    Our goal is to show that $\varphi$ uses at least $t$ graphs from~$\union{\calG}$.
    Observe that for every $t' \neq t$ and every $i' \in [t']$ we have $G_{i'}^{t'} \not\subseteq H_t$.
    Hence, $\varphi$ uses only copies of $K_2$ and $G_1^t,\ldots,G_t^t$ to cover $H_t$.
    
    Consider any connected component $K$ of $H_t$ and let $v$ be one of the two vertices of degree~$t+1$ in $K$.
    If the $t$ pendant edges at $v$ are not covered by some $G_i^t$, then $\varphi$ uses at least $t$ different copies of $K_2$, all sharing vertex $v$ and we are done.
    Otherwise, $\varphi$ uses some $G_i^t$ to cover the pendant edges at $v$, and necessarily $i$ is (unique) number such that the nearest degree-$3$ vertex has distance $2t-i$ to $v$ in $K$.
    As this holds for every component $K$ of $H_t$ and every degree-$t+1$ vertex in $K$, we have that $\varphi$ uses each of $G_1^t,\ldots,G_t^t$.
    Crucially, for each $i\in [t]$ there is exactly one subgraph of $H_t$ that is isomorphic to $G_i^t$.
    And for every $j \neq i$, the copy of $G_i^t$ in $H_t$ shares a vertex with the copy of $G_j^t$ in $H_t$ (in the component of $H_t$ isomorphic to $H_{i,j}^t$).
    Hence, $\varphi$ uses $t$ pairwise intersecting graphs from $\calG$, which implies that $\cn{u}{\calG}{H_t} \geq t$, as desired.
\end{proof}

As mentioned above, \cref{thm:tw_sep} shows that we must put some restriction on the guest class $\calG$ in order to conclude that all host classes~$\calH$ of some kind are $(\cn{u}{\calG}{},\cn{l}{\calG}{})$-bounded.
In fact, our weakest restriction, namely $\calG$ being component-closed, is already enough.

\begin{theorem}\label{thm:cc_tw_bound}
    Let $\calG$ be a component-closed guest class and let $\calH$ be a host class with $\tw(\calH) = w <\infty$.
    Then we have $\cn{u}{\calG}{H}\leq (w+1) \cn{l}{\calG}{H}$ for all $H\in \calH$.
\end{theorem}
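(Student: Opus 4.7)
The plan is to take an $s$-local injective $\calG$-cover of $H$ where $s = \cn{l}{\calG}{H}$, break the guests into their connected components (which stay in $\calG$ because $\calG$ is component-closed), and then greedily regroup them into vertex-disjoint unions by coloring a naturally arising chordal intersection graph.

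First I would fix $H \in \calH$ and an $s$-local injective $\calG$-cover $\varphi \colon G_1 \cupdot \cdots \cupdot G_t \to H$ with $s = \cn{l}{\calG}{H}$. Using that $\calG$ is component-closed, I replace each $G_i$ by the disjoint union of its connected components and obtain an injective $s$-local $\calG$-cover in which every $G_i$ is connected; call its images $C_i = \varphi(G_i) \subseteq H$. Now fix a tree decomposition $(T, \{B_x\}_{x \in V(T)})$ of $H$ of width $w$, and for each $i \in [t]$ let $T_i \subseteq V(T)$ be the set of bags meeting $V(C_i)$. Since $C_i$ is connected in $H$ and the bags containing any fixed vertex form a connected subtree of $T$, the set $T_i$ induces a connected subtree of $T$.

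Next I form the intersection graph $I$ on vertex set $[t]$ with $ij \in E(I)$ iff $T_i \cap T_j \neq \emptyset$. This is the intersection graph of subtrees of a tree, hence chordal. By the Helly property for subtrees of a tree, any clique in $I$ has a bag $B_x$ common to all its subtrees; since $\abs{B_x} \leq w+1$ and each vertex of $H$ lies in at most $s$ of the subgraphs $C_i$ (by $s$-locality), at most $(w+1)s$ indices $i$ have $x \in T_i$. So $\omega(I) \leq (w+1)s$, and since $I$ is chordal, $\chi(I) \leq (w+1)s$. Fix a proper coloring of $I$ with color classes $\calC_1, \ldots, \calC_k$, where $k \leq (w+1)s$.

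Finally, observe that if $i$ and $j$ lie in the same color class, then $T_i \cap T_j = \emptyset$, so $C_i$ and $C_j$ are vertex-disjoint in $H$ (a common vertex would lie in some shared bag). For each color class $\calC_r$, set $J_r = \bigcupdot_{i \in \calC_r} G_i$, which is a vertex-disjoint union of graphs in $\calG$ and hence lies in $\union{\calG}$. The restriction $\varphi|_{J_r}$ is injective, because each $\varphi|_{G_i}$ is injective and the images $\{C_i \setmid i \in \calC_r\}$ are pairwise vertex-disjoint. Together the maps $\varphi|_{J_1}, \ldots, \varphi|_{J_k}$ form an injective $k$-global $\union{\calG}$-cover of $H$, witnessing $\cn{u}{\calG}{H} \leq k \leq (w+1)s = (w+1)\cn{l}{\calG}{H}$.

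The only nontrivial ingredients are the Helly property and the fact that subtree intersection graphs are chordal with $\chi = \omega$; both are classical, so the main ``idea'' is simply recognizing that $\calG$ being component-closed allows us to split guests into connected pieces and then exploit bounded treewidth of $H$ via the chordal intersection structure.
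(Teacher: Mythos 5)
Your proof is correct and follows essentially the same route as the paper's: use component-closedness to reduce to connected guests, form the guest-subtrees in a width-$w$ tree decomposition, bound the clique number of their (chordal) intersection graph via the Helly property and $s$-locality, and convert a proper $(w+1)s$-coloring into an injective $\union{\calG}$-cover. No gaps.
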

\begin{proof}
    Fix $H \in \calH$, let $s =\cn{l}{\calG}{H}$, and let $\varphi\colon G_1 \cupdot \cdots \cupdot G_t \to H$ be an $s$-local injective $\calG$-cover of $H$.
    Since $\calG$ is component-closed, we can assume without loss of generality that all guests $G_1,\dots,G_t$ used in $\varphi$ are connected.
    Consider a tree-decomposition of $H$ with host tree $T$ and bags of size of at most $w+1$.
    By definition of tree-decompositions, for each vertex $v$ of~$H$ there is a corresponding subtree $T_v$ of $T$ consisting of all vertices of $T$ whose bag contains $v$.
    Whenever $u$ and $v$ are adjacent in~$H$, their subtrees $T_u$ and $T_v$ share at least one vertex.
    
    For $i\in[t]$ let $T_i$ be the union of all $T_v$ for which $\varphi^{-1}(v)$ contains a vertex of $G_i$.
    Since $G_i$ is connected, $T_i$ is connected and thus a subtree of $T$, which we call the \emph{guest-subtree} of $G_i$.
    If two guest-subtrees $T_i,T_j$ are vertex-disjoint in~$T$, then the corresponding graphs $G_i,G_j$ are vertex-disjoint in~$H$.
    Our goal is to color the guest-subtrees with $(w+1)s$ colors such that the guest-subtrees in each color class are pairwise vertex-disjoint and hence the corresponding graphs in $H$ form together one graph in $\union{\calG}$.
    
    To this end, consider the intersection graph of all guest-subtrees, that is graph $S = ([t], \{\{i,j\}\setmid V(T_i)\cap V(T_j)\neq \emptyset\})$.    
    Since the family of trees has the Helly property, for each clique $X$ in $S$ there is a node $\mu$ of $T$ that is contained in the guest-subtree of $G_i$ for each $i \in X$.
    Since the bag of $\mu$ holds at most $w+1$ vertices of $H$ and each of these vertices is contained in at most $s$ guests, we conclude that $|X| \leq (w+1)s$.
    Since $S$ is the intersection graph of subtrees of a tree, it is a chordal graph~\cite{gavril1974intersection} and thus perfect~\cite{hajnal1958auflosung,Ber61}.
    Hence, $\chi(S) \leq (w+1)s$ and there is a proper $(w+1)s$-coloring of $S$.
    This means that the guests $G_1,\ldots,G_t$ can be partitioned into $(w+1)s$ sets such that the guests within each set are vertex-disjoint.
    Taking for each of those $(w+1)s$ sets its union as a new guest results in a $(w+1)s$-global injective $\union{\calG}$-cover of $H$.
    Thus $\cn{u}{\calG}{H} \leq (w+1)s = (w+1) \cn{l}{\calG}{H}$, which concludes the proof.
\end{proof}

\cref{thm:cc_tw_bound} states that if $\calG$ is any component-closed guest class, then a host class~$\calH$ is $(\cn{u}{\calG}{},\cn{l}{\calG}{})$-bounded, provided that graphs in $\calH$ have bounded treewidth.
Next we show that the bounded treewidth of $\calH$ is crucial for that statement.

\begin{theorem}\label{thm:minor_comp_sep}
    There is an $M$-minor-free host class $\calH$ and a component-closed guest class $\calG$ such that $\calH$ is not $(\cn{u}{\calG}{},\cn{l}{\calG}{})$-bounded.
\end{theorem}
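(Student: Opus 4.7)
The plan is to adapt the proof of \cref{thm:tw_sep} by keeping only the connected component $\hat{G_i^t}$ of each guest graph, so that $\calG$ becomes component-closed, and to compensate by replacing the forest hosts by planar ones, yielding a $K_5$-minor-free host class. Concretely, I would take $\hat{G_i^t}$ exactly as defined in the proof of \cref{thm:tw_sep} --- the path $P_{2t+1} = p_0,\ldots,p_{2t}$ with a copy of $K_{1,t}$ glued at each endpoint and a pendant $K_2$ at vertex $p_i$ --- and set $\calG := \{\hat{G_i^t} \setmid t \geq 4,\, i \in [t]\} \cup \{K_2\}$, which is component-closed since every member is connected. For each $t \geq 4$, let $H_t$ be the planar graph obtained by arranging $t$ decorated long paths $L_1,\ldots,L_t$, where $L_i$ carries exactly the internal decorations of $\hat{G_i^t}$ (its two attached $K_{1,t}$ stars and its pendant $K_2$ at the vertex analogous to $p_i$), as a pseudo-line arrangement in the plane, so that any two $L_i,L_j$ cross in a single interior vertex $v_{ij}$, no three of them meet in a common point, and all stars and pendants are drawn in the unbounded face. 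Each $H_t$ is planar, so $\calH := \{H_t \setmid t \geq 4\}$ is $K_5$-minor-free; we take $M = K_5$.

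The local upper bound $\cn{l}{\calG}{H_t} \leq 2$ is witnessed by the natural injective cover in which each $L_i$ (together with its two stars and its pendant $K_2$) plays the role of a single occurrence of $\hat{G_i^t}$: every vertex of $H_t$ lies in exactly one guest, except the $\binom{t}{2}$ crossing vertices $v_{ij}$, which lie in exactly two. For the union lower bound I would fix any injective $\union{\calG}$-cover $\varphi$ of $H_t$ and argue, via the pendant-$K_2$ signature of \cref{thm:tw_sep}, that every large guest in $\varphi$ is a copy of some $\hat{G_k^t}$ embedded exactly onto a pseudo-line $L_k$ with its stars and pendant: the two degree-$(t+1)$ star-centers of a large $\hat{G_{i'}^{t'}}$-guest force $t' = t$ and pin its path endpoints to two star-centers of pseudo-lines, and the pendant-$K_2$ position then pins the embedding onto a single pseudo-line. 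Now fix any star-center $u_i$ of $L_i$. Either (a) some large guest covers all $t+1$ edges incident to $u_i$ at once, or (b) no large guest has $u_i$ as a star-center, in which case each guest containing $u_i$ covers at most three of its incident edges (a $K_2$ covers one, and any large guest mapping $u_i$ to its degree-$\leq 3$ interior covers at most three), so at least $\lceil (t+1)/3 \rceil$ guests pass through $u_i$ and, being pairwise incident at $u_i$, they lie in that many distinct union-guests. If case (a) holds for every $i \in [t]$, then the $t$ large guests covering $L_1,\ldots,L_t$ pairwise share the crossings $v_{ij}$ and lie in $t$ distinct union-guests. Either way $\cn{u}{\calG}{H_t} \to \infty$ while $\cn{l}{\calG}{H_t} \leq 2$, so $\calH$ is not $(\cn{u}{\calG}{},\cn{l}{\calG}{})$-bounded.

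The main obstacle will be controlling the injective embeddings of $\hat{G_{i'}^{t'}}$ into $H_t$, in particular ruling out ``detour'' embeddings that use a crossing vertex $v_{ij}$ to jump from pseudo-line $L_i$ onto $L_j$ midway. This can be handled by choosing the inter-crossing segments of the pseudo-lines long enough relative to $2t$: a detour route then has the pendant $K_2$ at a mismatched position, so the position-encoded signature from \cref{thm:tw_sep} pins every large guest to a single pseudo-line as needed. The remaining planarity check is routine once stars and pendants are drawn in the unbounded face of the pseudo-line arrangement.
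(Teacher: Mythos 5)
Your high-level plan coincides with the paper's: the paper also takes $\calH$ to be the class of planar graphs and builds each host out of $\ell$ pairwise-crossing decorated paths (L-shaped paths inside a triangular grid), uses a component-closed guest class of connected decorated paths whose identity is position-encoded, covers the host $2$-locally by the canonical copies, and derives the union lower bound from the fact that the forced copies pairwise intersect at the crossings. The problem lies exactly where you flag ``the main obstacle'': the pendant-$K_2$ signature of \cref{thm:tw_sep} does not pin a large guest to a single pseudo-line once the host contains crossings. An injective copy of $\hat{G_k^t}$ is only constrained to map its two star centers to host vertices of degree at least $t'+1\ge 5$ (i.e.\ to the $2t$ line-ends) joined by a path of length exactly $2t'$, with a host vertex of degree at least $3$ at distance $k$ from one end to receive the pendant. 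A detour path switching from $L_j$ to $L_{j'}$ at $v_{jj'}$ can well have the right length (the distances from the two chosen ends to the crossing merely need to sum to $2t'$, so even $t'=t$ is not forced), and, crucially, the crossing vertices have degree $4$, so the guest's pendant leaf can be absorbed by an unused neighbour of whatever crossing happens to sit at position $k$ along the detour. Hence ``mismatched pendant position'' does not exclude detour copies. Such copies use only half of two lines each, can avoid the other crossings, and need not pairwise intersect; in your case (a) the large guests could then be packed into a bounded number of union-guests and the lower bound $\cn{u}{\calG}{H_t}\ge t$ collapses.

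Your proposed repair, taking the inter-crossing segments ``long relative to $2t$'', is inconsistent with the construction: each line must carry $t-1$ crossings on a path of only $2t$ edges, and you cannot lengthen the lines without changing the guests, since a line longer than $2t$ edges admits no copy of any $\hat{G_i^t}$ spanning it, which destroys the $2$-local upper bound at the star centers (their $t$ pendant edges would then need $t$ copies of $K_2$). The paper sidesteps all of this by marking the two ends of each guest path with a $C_3$ and a $C_{2\ell+1}$ and placing the single star $K_{1,\ell}$ in the middle: the host is bipartite except for these cycles, which hang off cut edges, so every large guest's endpoints are forced onto specific attachment vertices, and the exact path length together with the position of the star then forces the unique monotone L-shaped route through the grid. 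To salvage your variant you would need end-markers that, like those cycles, have no alternative homes anywhere in the host, plus an explicit argument excluding length-$2t'$ detour paths between line-ends.
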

\begin{proof}
    Let~$\calH$ be the class of all planar graphs, and note that $\calH$ is for instance $K_5$-minor-free.
    For our argument, we consider graphs $H_\ell$ with integer $\ell \geq 4$ in $\calH$ defined as follows; see Figure~\ref{fig:gridSep}.

    \begin{figure}[ht]
        \centering
        \includegraphics[page=1]{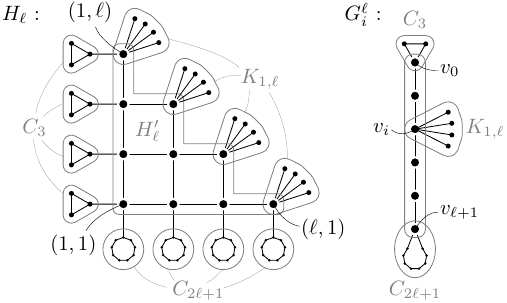}
        \hfill
        \includegraphics[page=2]{gridSep_v2}
        \caption{%
            Illustration of the graph $H_\ell$ for $\ell = 4$ (left) and $G_i^\ell$ for $i=2,\ell=4$ (middle).
            Right: a $2$-local $\ell$-global injective $\calG$-cover of $H_\ell$ for $\ell = 4$.
        }
        \label{fig:gridSep}
    \end{figure}
    
    For the construction of $H_\ell = (V_\ell,E_\ell)$, we start with the subgraph $H'_\ell$ of the $(\ell\times\ell)$-grid~$P_{\ell}\, \square \, P_{\ell}$ induced by the ``triangular'' vertex-set $\{(i,j) \in [\ell]^2 \setmid i+j \leq \ell+1\}$; see \cref{fig:gridSep}.
    We then attach some simple graphs to $H'_\ell$.
    At each vertex $(i,j) \in V(H'_\ell)$ with $i + j = \ell + 1$ we add $\ell$ pendant edges, making $(i,j)$ the center of a star $K_{1,\ell}$.
    For each vertex $(1,j)$ with $j \in [\ell]$ we add a triangle $C_3$, connecting $(1,j)$ by an edge to one vertex of the triangle.
    And for each vertex $(i,1)$ with $i \in [\ell]$ we add a cycle $C_{2\ell+1}$ of length $2\ell+1$, connecting $(i,1)$ by an edge to one vertex of the cycle.
    See again \cref{fig:gridSep} for an illustration.

    Next, we define the guest graphs.
    For integers $\ell \geq 4$ and $i \in [\ell]$, let $G_i^\ell$ consist of a path $v_0,v_1,\dots,v_{\ell+1}$ on $\ell+2$ vertices with a $C_3$ attached to $v_0$, a $C_{2\ell+1}$ attached to $v_{\ell+1}$, and the center of a star $K_{1,\ell}$ attached to $v_i$; see \cref{fig:gridSep}.
    Then we consider the guest class $\calG = \{G_i^\ell \setmid \ell \in \mathbb{N}, i \in [\ell]\}\cup \{K_2\}$.
    Note that $\calG$ is component-closed, since all graphs in $\calG$ are connected.
    We shall show for every $\ell \geq 1$ that $\cn{l}{\calG}{H_\ell} \leq 2$, while $\cn{u}{\calG}{H_\ell} \geq \ell$, and therefore that $\calH$ is not $(\cn{u}{\calG}{},\cn{l}{\calG}{})$-bounded.
    
    First, consider the injective $\calG$-cover $\varphi \colon G_1^\ell \cupdot \cdots \cupdot G_\ell^\ell \to H_\ell$ of $H_\ell$ illustrated in~\cref{fig:gridSep}.
    In particular, for every $i \in [\ell]$ we have
    \[
        \varphi|_{G_i^\ell}(v_a) = 
            \begin{cases}
                (a,\ell+1-i) & \text{ for } a = 1,\ldots,i\\
                (i,\ell+1-a) & \text{ for } a = i,\ldots,\ell.
            \end{cases}   
    \]
    Observe that $\varphi$ is $2$-local and hence $\cn{l}{\calG}{H_\ell} \leq 2$.
        
    To show that $\cn{u}{\calG}{H_\ell} \geq \ell$, consider any injective $\union{\calG}$-cover $\varphi$ of $H_\ell$, i.e., a cover of $H_\ell$ with vertex-disjoint unions of graphs in $\calG$.
    Our goal is to show that $\varphi$ uses at least $\ell$ graphs from $\union{\calG}$.
    Observe that for every $\ell' \neq \ell$ and every $i' \in [\ell']$ we have $G_{i'}^{\ell'} \not\subseteq H_\ell$, since $C_{2\ell'+1}$ is a subgraph of $G_{i'}^{\ell'}$ but not of $H_\ell$.
    Hence, $\varphi$ uses only copies of $K_2$ and $G_1^\ell,\ldots,G_\ell^\ell$ to cover $H_\ell$.

    Let $v$ be one of $\ell$ the vertices of degree $\ell+2$ in $H_\ell$.
    If the $\ell$ pendant edges at $v$ in $H_\ell$ are not covered by some $G_i^\ell$, then $\varphi$ uses at least $\ell$ different copies of $K_2$, all sharing vertex $v$ and we are done.
    Otherwise, $\varphi$ uses some $G_i^\ell$ to cover the pendant edges at $v$.
    As the $C_{2\ell+1}$ in $G_i^\ell$ is also mapped to $H_\ell$, we necessarily have that $i$ is exactly the distance of $v$ to the nearest copy of $C_3$ in $H_\ell$; i.e., $v$ is the vertex $(i,\ell+1-i) \in V(H_\ell)$.
    As this holds for every vertex of degree~$\ell+2$ in $H_\ell$, we have that $\varphi$ uses each of $G_1^\ell,\ldots,G_\ell^\ell$.
    Crucially, for each $i \in [\ell]$ there is exactly one subgraph of $H_\ell$ that is isomorphic to $G_i^\ell$.
    And for every $j \neq i$, the copy of $G_i^\ell$ in $H_\ell$ shares a vertex with the copy of $G_j^\ell$ in $H_\ell$.
    (It is the vertex $(i,\ell+1-j)$ if $i < j$ and vertex $(j,\ell+1-i)$ if $i > j$.)
    Hence, $\varphi$ uses $\ell$ pairwise intersecting graphs from $\calG$, which implies that $\cn{u}{\calG}{H_\ell} \geq \ell$, as desired.
\end{proof}

\cref{thm:minor_comp_sep,thm:cc_tw_bound,thm:tw_sep} are all concerned with $(\cn{u}{\calG}{},\cn{l}{\calG}{})$-boundedness with respect to non-hereditary guest classes~$\calG$.
To end this section, we consider $(\cn{l}{\calG}{},\cn{f}{\calG}{})$-boundedness, for which the situation turns out quite different.
In fact, we only have a separating example already for the most restricted cases for $\calG$ and $\calH$.

Let us define a \emph{hairy cycle} to be a graph that is obtained from a cycle~$C_n$ by attaching any number of pendant edges to vertices of~$C_n$, see \cref{fig:hairy_cycle} (left) for an example.

\begin{proposition}\label{lem:sep_folded-local_G_cc_H_tw}
    Let~$\calG = \set{G \given \text{$G$ is a hairy cycle}} \cup \{K_2\}$ be the class that consists of~$K_2$ and all hairy cycles and let~$\calS$ be the class of all stars.
    Then we have
    \[
        \cn{l}{\calG}{\calS} = \infty, \qquad \cn{f}{\calG}{\calS} \leq 2.
    \]
    In particular, there exists a component-closed class~$\calG$ and a class~$\calH$ of treewidth~$1$ such that $\calH$ is not $(\cn{l}{\calG}{},\cn{f}{\calG}{})$-bounded.
\end{proposition}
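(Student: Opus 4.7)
The plan is to verify the two claimed bounds separately and then read off the ``in particular'' consequence.

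For the upper bound $\cn{f}{\calG}{\calS} \leq 2$, I would exhibit, for each $n \geq 2$, a single hairy cycle covering $K_{1,n}$ in a $2$-local way. Take the $4$-cycle $u_1, w_1, u_2, w_2$ and attach $n-2$ pendant edges $p_1, \ldots, p_{n-2}$ at $u_1$; call the resulting hairy cycle $G$. Writing $c$ for the center of $K_{1,n}$ and $\ell_1, \ldots, \ell_n$ for its leaves, define $\varphi \colon G \to K_{1,n}$ by $\varphi(u_1) = \varphi(u_2) = c$, $\varphi(w_i) = \ell_i$ for $i \in \{1,2\}$, and $\varphi(p_j) = \ell_{j+2}$ for $j \in [n-2]$. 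Since $K_{1,n}$ is bipartite the $4$-cycle folds cleanly onto the two edges $c\ell_1, c\ell_2$, the pendants cover the remaining $n-2$ edges, and $\lvert \varphi^{-1}(v)\rvert \leq 2$ for every $v$. The boundary cases $n \leq 1$ are covered by a single copy of $K_2 \in \calG$, yielding $\cn{f}{\calG}{K_{1,n}} \leq 2$ uniformly and hence $\cn{f}{\calG}{\calS} \leq 2$.

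For the lower bound $\cn{l}{\calG}{\calS} = \infty$, the key observation is that every star is a forest, so every subgraph of $K_{1,n}$ is acyclic. Therefore no hairy cycle embeds into $K_{1,n}$ as a (not necessarily induced) subgraph, and so in any \emph{injective} $\calG$-cover of $K_{1,n}$ the only usable guest is $K_2$. Each such $K_2$ covers exactly one edge of $K_{1,n}$ and contributes one preimage of the center $c$. Since $K_{1,n}$ has $n$ edges all incident to $c$, we conclude $\cn{l}{\calG}{K_{1,n}} \geq n$, and letting $n \to \infty$ gives $\cn{l}{\calG}{\calS} = \infty$.

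For the ``in particular'' part, every graph in $\calG$ is connected (hairy cycles and $K_2$ are connected), so $\calG$ is trivially component-closed, while $\calS$ consists of trees and hence has treewidth $1$. Together with the two bounds above, this witnesses a component-closed $\calG$ and a host class $\calH = \calS$ of treewidth $1$ that fails to be $(\cn{l}{\calG}{},\cn{f}{\calG}{})$-bounded. There is no real obstacle here; the only mild subtlety is making sure the explicit construction of $\varphi$ handles small $n$ without leaving $\calG$, which is dispatched by reverting to $K_2$.
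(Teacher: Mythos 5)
Your proposal is correct and follows essentially the same route as the paper: the same hairy cycle (a $C_4$ with $n-2$ pendants at one vertex, folding two antipodal vertices onto the center) for the $2$-local non-injective cover, and the same acyclicity argument forcing every injective cover of $K_{1,n}$ to use only copies of $K_2$ through the center. No gaps.
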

\begin{proof}
    Let~$K_{1,n} \in \calS$ be a star with $n$ leaves and let~$w$ denote the center of~$K_{1,n}$. 
    
    We first argue that~$K_{1,n}$ admits a (non-injective) $2$-local $\calG$-cover, which then certifies $\cn{f}{\calG}{K_{1,n}} \leq 2$.
    If $n=1$, then $K_{1,n}$ consists of a single edge; that is~$K_{1,n} = K_2 \in \calG$. 
    In particular, we have $\cn{f}{\calG}{K_{1,n}} = 1$. 
    If $n \geq 2$, we construct a $2$-local $\calG$-cover of~$K_{1,n}$ as follows.
    Let~$C_4$ be a cycle on four vertices, and $a,b$ be two non-adjacent vertices on $C_4$. 
    Now consider the hairy cycle~$G$ that arises from~$C_4$ by attaching $n-2$~vertices of degree~$1$ to the vertex~$a$.
    Mapping both~$a$ and~$b$ to the center~$w$ of~$K_{1,n}$ and all other vertices to a different leaf of~$K_{1,n}$ yields a non-injective $2$-local $\calG$-cover of~$K_{1,n}$, see \cref{fig:hairy_cycle} for an example.
    
    \begin{figure}[ht]
        \centering
        \includegraphics{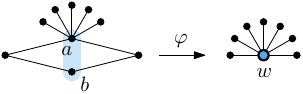}
        \caption{%
            A non-injective $2$-local cover $\varphi$ of the star~$K_{1,7}$ with a hairy cycle.
            Vertices of the hairy cycle that are mapped to the center of the star are colored in blue.
        }
        \label{fig:hairy_cycle}
    \end{figure}

    We now show that $\cn{l}{\calG}{K_{1,n}} \geq n$, which then implies that $\cn{l}{\calG}{\calS} = \infty$.
    Let $\varphi\colon G_1 \cupdot \dots \cupdot G_t \to K_{1,n}$ be any injective $\calG$-cover of~$K_{1,n}$. 
    As $\varphi$ is injective, each guest graph~$G_i$ is a subgraph of the host~$K_{1,n}$. 
    It follows that $G_i \cong K_2$ for all~$i \in [t]$.
    Yet, as each of the $n$ edges of~$K_{1,n}$ is incident to the center~$w$, we have $w \in \varphi(V(G_i))$ for all $i \in [t]$.
    Thus, we have $|\varphi^{-1}(w)| \geq n$ and hence $\cn{l}{\calG}{K_{1,n}} \geq n$.
\end{proof}

\section{Hereditary Guest Classes}
\label{sec:hereditary-guest-classes}

In this section, we consider hereditary (or even monotone) guest classes $\calG$ and in particular investigate when a host class~$\calH$ is $(\cn{u}{\calG}{},\cn{l}{\calG}{})$-bounded, or $(\cn{l}{\calG}{},\cn{f}{\calG}{})$-bounded, or even $(\cn{u}{\calG}{},\cn{f}{\calG}{})$-bounded.
The main advantage of the guest class $\calG$ being monotone (respectively hereditary) is that we can restrict a given $\calG$-cover of a graph~$H \in \calH$ to a $\calG$-cover of any subgraph (respectively weak induced subgraph) of~$H$, as formalized in \cref{lem:restrict-cover-to-subgraph} below.
We call a subgraph $H'$ of~$H$ \emph{weak induced} if each connected component of~$H'$ is an induced subgraph of~$H$.
Note that a weak induced subgraph of $H$ is not necessarily an induced subgraph of $H$.

\begin{lemma}
    \label{lem:restrict-cover-to-subgraph}
    Let $\calG$ be any graph class and $\cn{x}{\calG}{}$ be any $\calG$-covering number.
    Then for every graph $H$ each of the following holds.
    \begin{enumerate}[(i)]
        \item If $\calG$ is monotone, then for every subgraph $H'$ of $H$ we have $\cn{x}{\calG}{H'} \leq \cn{x}{\calG}{H}$. \label{item:cover_subgraph_G_mon}
        \item If $\calG$ is hereditary, then for every induced subgraph $H'$ of $H$ we have $\cn{x}{\calG}{H'} \leq \cn{x}{\calG}{H}$.
        \label{item:cover_ind_subgraph_G_her}
        \item If $\calG$ is hereditary and $\cn{x}{\calG}{} \neq \cn{g}{\calG}{}$, then for every weak induced subgraph $H'$ of $H$ we have $\cn{x}{\calG}{H'} \leq \cn{x}{\calG}{H}$. \label{item:cover_weak_ind_subgraph_G_her}
    \end{enumerate}
\end{lemma}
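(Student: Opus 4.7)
The plan is to treat all three cases by a uniform scheme: start with an optimal $\calG$-cover $\varphi\colon G_1 \cupdot \cdots \cupdot G_t \to H$ realizing $\cn{x}{\calG}{H}$ and build a cover of $H'$ from it by restricting each $G_i$. Locality, injectivity and the guest count $t$ are preserved under any such restriction, and edge-surjectivity onto $H'$ follows automatically as soon as every $G$-edge whose image lies in $E(H')$ is kept. Thus the entire work in each part reduces to showing that the restricted guest graphs still lie in the appropriate class.

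For part \eqref{item:cover_subgraph_G_mon} with $\calG$ monotone, I would delete from each $G_i$ every edge $e$ with $\varphi(e) \notin E(H')$ together with every vertex not mapping into $V(H')$; the result is a subgraph of $G_i$ and hence lies in $\calG$ by monotonicity. For part \eqref{item:cover_ind_subgraph_G_her} with $\calG$ hereditary and $H'$ induced in $H$, I would instead take $G_i^\# := G_i[\varphi^{-1}(V(H'))]$: hereditariness gives $G_i^\# \in \calG$, and the induced property of $H'$ in $H$ forces every edge of $G_i^\#$ to map into $E(H')$. Both constructions preserve the guest-by-guest structure of the cover, which is why they also handle $\cn{g}{\calG}{}$.

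For part \eqref{item:cover_weak_ind_subgraph_G_her} with $\calG$ hereditary and $H'$ only weak induced, I would define $G^\#$ as the subgraph of $G := G_1 \cupdot \cdots \cupdot G_t$ with edge set $E^\# := \{e \in E(G) \colon \varphi(e) \in E(H')\}$ and vertex set the endpoints of these edges. The main claim is that $G^\# \in \union{\calG}$, which I would prove by showing that every connected component $D$ of $G^\#$ is an \emph{induced} subgraph of the component $C$ of $G$ containing it, so $D \in \calG$ by hereditariness. Given $u, v \in V(D)$ with $uv \in E(C)$, connectivity of $D$ yields a $u$--$v$ path in $G^\#$ whose $\varphi$-image is a walk in $H'$, placing $\varphi(u)$ and $\varphi(v)$ in a common component $K$ of $H'$. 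The weak-induced property $K = H[V(K)]$ combined with $\varphi(u)\varphi(v) \in E(H)$ then forces $\varphi(u)\varphi(v) \in E(K) \subseteq E(H')$, i.e., $uv \in E^\# = E(D)$. Splitting $G^\#$ into its connected components yields a $\calG$-cover of $H'$ for $\cn{l}{\calG}{}$ and $\cn{f}{\calG}{}$, while keeping the $t$ pieces $G^\# \cap G_i \in \union{\calG}$ handles $\cn{u}{\calG}{}$.

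The exclusion of $\cn{g}{\calG}{}$ in \eqref{item:cover_weak_ind_subgraph_G_her} is unavoidable because the restriction can fragment a single $G_i \in \calG$ into several components that only jointly lie in $\union{\calG}$; this would inflate a global count but leaves unions and locality unaffected. The main obstacle of the whole lemma is the induced-component claim in \eqref{item:cover_weak_ind_subgraph_G_her}; the rest is essentially bookkeeping about homomorphisms. That obstacle is overcome precisely by combining $G^\#$-connectivity of $D$ with the weak-induced structure of $H'$ to upgrade any $E(C)$-edge with both endpoints in $V(D)$ to an edge actually retained in $E^\#$.
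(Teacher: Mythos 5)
Your proposal is correct and follows essentially the same route as the paper: restrict the optimal cover to $H'$, use monotonicity resp.\ hereditariness to keep the restricted guests in $\calG$, and in the weak-induced case land in $\union{\calG}$, which is harmless for the union, local, and folded numbers but not for the global one. The only cosmetic difference is in part~(iii), where the paper restricts each $G_i$ to the preimage of each component $H_j'$ of $H'$ (immediately an induced subgraph of $G_i$ since $H_j'$ is induced in $H$), whereas you take connected components of the full restriction and recover the induced property via your connectivity argument; both work.
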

\begin{proof}
    Let $H_1', \dots, H_k'$ be the components of the subgraph~$H'$ of $H$ and let~$\varphi\colon G_1 \cupdot \dots \cupdot G_t \to H$ be a $\calG$-cover of~$H$. 
    For~$i \in [t]$ and~$j \in [k]$, let~$G_{i,j} = G_{i}|_{H_j'}$ be the subgraph of~$G_i$ when restricting only to those vertices and edges that are mapped to the component~$H_j'$.
    Note that the graphs~$G_{i,\ell}$ and~$G_{i,j}$ are vertex-disjoint for $\ell \neq j$.
    That is, for every~$i \in [t]$, the graph~$G_i' = G_{i,1} \cupdot \dots \cupdot G_{i,k}$ is a subgraph of~$G_i$.
    Restricting~$\varphi$ now yields a cover $\varphi'\colon G_1' \cupdot \dots \cupdot G_t' \to H'$ of~$H'$ that satisfies the following:
    \begin{itemize}
    \item If~$\varphi$ is injective, so is~$\varphi'$.
    \item If~$\varphi$ is $s$-local, so is $\varphi'$.
    \item If~$\varphi$ is $t$-global, so is $\varphi'$.
    \end{itemize}

    If~$\calG$ is monotone, we have~$G_i' \in \calG$ and~$\varphi'$ is in particular a $\calG$-cover of~$H'$.
    Thus, we have $\cn{x}{\calG}{H'} \leq \cn{x}{\calG}{H}$, which concludes the proof of~\eqref{item:cover_subgraph_G_mon}.

    If~$H'$ is an induced subgraph of~$H$, the graphs~$G_i'$ are induced subgraphs of~$G_i$.
    Hence, for hereditary~$\calG$, $\varphi'$ is a $\calG$-cover of~$H'$.
    This shows $\cn{x}{\calG}{H'} \leq \cn{x}{\calG}{H}$ and~\eqref{item:cover_ind_subgraph_G_her} follows.

    If~$H'$ is a weak induced subgraph of~$H$, the graphs~$G_{i,j}$ are induced subgraphs of~$G_i$.
    Hence, if~$\calG$ is hereditary, each~$G_i$ lies in the union-closure~$\overline{\calG}$ of~$\calG$ and~$\varphi'$ is a $\overline{\calG}$-cover of~$H'$, i.e., $\cn{x}{\overline{\calG}}{H'} \leq \cn{x}{\calG}{H}$.
    As $\calG$- and $\overline{\calG}$-covering numbers coincide in the union, local and folded setting (that is $\cn{x}{\overline{\calG}}{H'} = \cn{x}{\calG}{H'}$ for~$\mathrm{x} \in \set{\mathrm{u},\mathrm{l},\mathrm{f}}$), we obtain~\eqref{item:cover_weak_ind_subgraph_G_her}.
\end{proof}

The monotonicity properties in \cref{lem:restrict-cover-to-subgraph} are useful when attempting to bound a more restrictive $\calG$-covering number $\cn{x}{\calG}{}$ in a less restrictive one~$\cn{y}{\calG}{}$.
Suppose we already know another host class~$\calQ$ that is $(\cn{x}{\calG}{},\cn{y}{\calG}{})$-bounded.
Then, given that $\calG$ is hereditary, we can show that also $\calH$ is $(\cn{x}{\calG}{},\cn{y}{\calG}{})$-bounded by decomposing every graph~$H \in \calH$ into few weak induced subgraphs that lie in~$\calQ$.
We use this approach to show two $(\cn{x}{\calG}{},\cn{y}{\calG}{})$-boundedness results:
\begin{itemize}
    \item In \cref{subsec:hereditary-sparse-guests} we use as $\calQ$ the class of all stars and obtain results for hereditary and sparse guest classes; cf.~\cref{thm:union_bounded_by_folded_G_her_mad}.
    \item In \cref{subsec:hereditary-guests-chromatic-hosts} we use as $\calQ$ the class of all bipartite graphs and obtain results for hereditary guest classes combined with host classes of bounded chromatic number; cf.~\cref{thm:union_bounded_by_folded_G_hereditary_H_chi}.
\end{itemize}

\subsection{Hereditary Sparse Guest Classes}
\label{subsec:hereditary-sparse-guests}

Here we consider guest classes $\calG$ that are hereditary (closed under taking induced subgraphs) and \emph{sparse}, that is, have bounded maximum average degree.
Examples of such classes include $k$-planar graphs~\cite{PT97}, $k$-map graphs~\cite{CGP02}, $k$-queue graphs~\cite{HR92}, $k$-stack graphs~\cite{O73,K74,BK79}, genus-$g$ graphs~\cite{GT01}, or any minor-closed class of graphs.

We shall show that with respect to a hereditary and sparse guest class $\calG$, every host class~$\calH$ is $(\cn{u}{\calG}{},\cn{l}{\calG}{})$-bounded and $(\cn{l}{\calG}{},\cn{f}{\calG}{})$-bounded.
As mentioned above, our approach consists of two steps, both centered around the class $\calS = \{K_{1,n} \setmid n \in \N_{\geq 1}\}$ of all stars and the class $\union{\calS}$ of all star forests.
First, we prove that $\calS$ is $(\cn{u}{\calG}{},\cn{l}{\calG}{})$-bounded for every guest class $\calG$, and $\union{\calS}$ is $(\cn{l}{\calG}{},\cn{f}{\calG}{})$-bounded for every hereditary guest class $\calG$.
As every sparse graph is the union of few weak induced star forests \cite{axenovich2019induced}, we then conclude that every host class~$\calH$ is $(\cn{u}{\calG}{},\cn{f}{\calG}{})$-bounded for every hereditary guest class $\calG$.

\begin{lemma}
    \label{lem:folded_equal_union_for_stars_with_G_her}
    Let $\calS = \{K_{1,n} \setmid n \in \N_{\geq 1}\}$ be the class of all stars and $\union{\calS}$ be the class of all star forests.
    Then, for every graph class~$\calG$, each of the following holds.
    \begin{enumerate}[(i)]
        \item For every $F \in \calS$ we have $\cn{l}{\calG}{F} = \cn{u}{\calG}{F}$.\label{item:local-union-star}
        \item If $\calG$ is component-closed, then for every $F \in \union{\calS}$ we have $\cn{l}{\calG}{F} = \cn{u}{\calG}{F}$.\label{item:local-union-star-forest}
        \item If $\calG$ is hereditary, then for every $F \in \union{\calS}$ we have $\cn{f}{\calG}{F} = \cn{l}{\calG}{F}$.\label{item:folded-local-star}
    \end{enumerate}
\end{lemma}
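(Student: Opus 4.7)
My plan is to establish in each part the reverse of the inequality given by~\eqref{eq:gulf-chain}; the inequality in that chain is free.

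For the first part, I would fix an $s$-local injective cover $\varphi \colon G_1 \cupdot \cdots \cupdot G_t \to K_{1,n}$ with $s = \cn{l}{\calG}{K_{1,n}}$ and observe that injectivity of $\varphi|_{G_i}$ forces each guest to contain at most one vertex mapping to the center $w$. A guest with no vertex over $w$ covers no edge of $K_{1,n}$ and may be discarded, leaving at most $|\varphi^{-1}(w)| \leq s$ guests, each in $\calG \subseteq \union{\calG}$. This directly yields $\cn{u}{\calG}{K_{1,n}} \leq s$.

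For the second part, the extra ingredient is component-closure of $\calG$: I may assume every $G_i$ is connected, which forces each guest to map into a single star-component of $F$. Applying the first-part argument within each component~$C$ with center~$w_C$ bounds the number of useful guests incident to $w_C$ by $s$. To assemble these into $s$ union-guests, I would color the useful guests so that guests at the same center receive distinct colors from $[s]$. Guests of the same color never share a component of $F$, hence are vertex-disjoint, so each color class defines an element of $\union{\calG}$ and we obtain an injective $s$-global $\union{\calG}$-cover.

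The third part is the technical core. Starting from an $s$-local (not necessarily injective) cover $\varphi \colon G_1 \cupdot \cdots \cupdot G_t \to F$ with $s = \cn{f}{\calG}{F}$, I construct an $s$-local injective cover explicitly. For each star-component $C$ with center $w_C$ and each preimage $x \in \varphi^{-1}(w_C)$ lying in some guest $G_{i(x)}$, let $L_x = \varphi(N_{G_{i(x)}}(x))$; since $\varphi$ is a homomorphism, $L_x$ is a set of leaves of $C$. For every $u \in L_x$ choose one $y_u \in N_{G_{i(x)}}(x)$ with $\varphi(y_u) = u$, and define $H_x$ to be the subgraph of $G_{i(x)}$ induced by $\{x\} \cup \{y_u \colon u \in L_x\}$. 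The key claim, and the step where the hypotheses really bite, is that $H_x \cong K_{1,|L_x|}$: any edge $y_u y_{u'}$ with $u \neq u'$ would be mapped by $\varphi$ to a non-edge of $F$, since distinct leaves of a star are non-adjacent. Hereditariness of $\calG$ then gives $H_x \in \calG$. Taking the disjoint union of all the $H_x$ as $(C, x)$ ranges over star-components and center-preimages, with $x \mapsto w_C$ and $y_u \mapsto u$, yields a homomorphism $\varphi'$ that is injective on each $H_x$, covers every edge of $F$ (because every preimage edge of $\varphi$ incident to a center $w_C$ contributes to some $L_x$), and is $s$-local: at $w_C$ the new preimages are exactly $\varphi^{-1}(w_C)$, while at a leaf $u$ of $C$ they are indexed by $\{x \in \varphi^{-1}(w_C) \colon u \in L_x\}$, of size at most $|\varphi^{-1}(w_C)| \leq s$. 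This gives $\cn{l}{\calG}{F} \leq s$; the main obstacle is the star-verification for $H_x$, where both hereditariness of $\calG$ and the non-adjacency of distinct leaves are essential.
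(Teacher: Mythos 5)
Your proposal is correct and follows essentially the same route as the paper's proof in all three parts: for (i) and (ii) the observation that every edge-containing guest must meet the center, combined with $s$-locality and (for star forests) component-closedness to work per component and regroup guests by index into vertex-disjoint unions; for (iii) the extraction, at each preimage of a center, of an induced star of selected preimage edges, which lies in $\calG$ by hereditariness and whose leaves are pairwise non-adjacent because distinct leaves of the host star are non-adjacent. The only cosmetic differences are that the paper selects exactly one preimage edge per host edge (so leaves end up covered once) and reduces to single components first, and you should discard the degenerate $H_x$ with $L_x = \emptyset$ since a single vertex need not lie in $\calG$ --- a triviality you already handle analogously in part (i).
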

\begin{proof}
    To prove~\eqref{item:local-union-star}, let $F = K_{1,n}$ be a star and let $w$ denote the center (vertex of degree~$n$) of $K_{1,n}$.
    (In case $n=1$, we make an arbitrary but fixed choice for $w$.)
    Consider an $s$-local injective $\calG$-cover $\varphi \colon G_1 \cupdot \cdots \cupdot G_t \to K_{1,n}$ with~$s = \cn{l}{\calG}{K_{1,n}}$.
    It suffices to show that each $G_i$, $i \in [t]$, is incident to the center $w$.
    This is indeed the case as every edge of~$K_{1,n}$ is incident to $w$ and we may assume that each $G_i$ contains at least one edge.
    Thus, no two $G_i$ are vertex-disjoint and hence $\cn{u}{\calG}{K_{1,n}} \leq t = s = \cn{l}{\calG}{K_{1,n}}$.
    
    \smallskip

    To prove~\eqref{item:local-union-star-forest}, let $F \in \union{\calS}$ be a star forest and $\varphi$ be an $s$-local injective $\calG$-cover of $F$ with $s = \cn{l}{\calG}{F}$.
    It is now enough to apply the above reasoning to every component of $F$.
    Indeed, as $\calG$ is component-closed, restricting $\varphi$ to a component $K$ of $F$ gives an $s$-local $\calG$-cover $\varphi_K$ of $K$.
    As argued above, since $K$ is a star, $\varphi_K$ is $s$-global, i.e., we may assume that $K$ is covered in $\varphi_K$ by $s$ graphs $G_{1,K},\ldots,G_{s,K}$ from $\calG$.
    Taking for each $i \in [s]$ the vertex-disjoint union $G_i = \bigcupdot_K G_{i,K}$, which is a graph in $\union{\calG}$, we obtain an $s$-global injective $\union{\calG}$-cover $\varphi' \colon G_{1,K} \cupdot \cdots \cupdot G_{s,K} \to F$ of $F$.
    That is, $\cn{u}{\calG}{F} \leq s = \cn{l}{\calG}{F}$.
    
    \smallskip

    To prove~\eqref{item:folded-local-star}, let $F \in \union{\calS}$ be a star forest with components $F_1,\ldots,F_r \in \calS$.
    As $\calG$ is hereditary (in particular component-closed), we have $\cn{f}{\calG}{F} = \max\{ \cn{f}{\calG}{F_i} \setmid i \in [r]\}$ and $\cn{l}{\calG}{F} = \max\{ \cn{l}{\calG}{F_i} \setmid i \in [r]\}$.
    Hence, it is enough to show that $\cn{f}{\calG}{F_i} = \cn{l}{\calG}{F_i}$ for $i=1,\ldots,r$, i.e., to consider again a single star $K_{1,n}$.
    
    So, consider an $s$-local $\calG$-cover $\varphi \colon G_1 \cupdot \cdots \cupdot G_t \to K_{1,n}$ with~$s = \cn{f}{\calG}{K_{1,n}}$.
    Crucially, note that $\varphi$ need not be injective here.
    As before, let $w$ denote the center of $K_{1,n}$.
    Let $e_1,\ldots,e_n$ denote the edges of $K_{1,n}$ and let $u_i = e_i - w$, $i \in [n]$, be the leaf vertex of $K_{1,n}$ incident to $e_i$.
    For each $i \in [n]$ we (arbitrarily) select one preimage under $\varphi$, i.e., select one edge $e'_i = x_iy_i$ in $G_1 \cupdot \cdots \cupdot G_t$ with $\varphi(x_i) = u_i$ and $\varphi(y_i) = w$.
    As $\varphi^{-1}(u_i)$ is an independent set in $G_1 \cupdot \cdots \cupdot G_t$, the edges $e'_1,\ldots,e'_n$ form a set $\{S_1,\ldots,S_k\}$ of stars in $G_1 \cupdot \cdots \cupdot G_t$.
    Each star $S_j$, $j \in [k]$, is an induced subgraph of some $G_i$, $i \in [t]$.
    Since $\calG$ is hereditary, it follows that $S_j \in \calG$.
    Moreover, each $S_j$ contains a vertex of $\varphi^{-1}(w)$ and hence $k \leq |\varphi^{-1}(w)| \leq s$.

    We now obtain an injective $k$-local $\calG$-cover $\varphi' \colon S_1 \cupdot \cdots \cupdot S_k \to K_{1,n}$ by setting $\varphi'|_{S_j} = \varphi|_{S_j}$ for $j=1,\ldots,k$.
    This shows that $\cn{l}{\calG}{K_{1,n}} \leq k \leq s = \cn{f}{\calG}{K_{1,n}}$, as desired.
\end{proof}

So, \cref{lem:folded_equal_union_for_stars_with_G_her} gives that the class $\union{\calS}$ of all star forests is $(\cn{u}{\calG}{},\cn{f}{\calG}{})$-bounded for every hereditary guest class $\calG$.

Axenovich, Dörr, Rollin and Ueckerdt show in \cite{axenovich2019induced} that every sparse graph is the union of few weak induced star forests.

\begin{lemma}[Axenovich, Dörr, Rollin, Ueckerdt {\cite[Thereom~7]{axenovich2019induced}}]
    \label{lem:mad_bounded_split_into_induced_star_forests}
    If~$H$ is a graph with $\mad(H) \leq d$, then $H$ is the union of at most $2d$~weak induced star forests.
\end{lemma}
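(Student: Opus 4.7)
The plan is to combine the degeneracy of $H$ with a local edge-labelling and a parity argument, yielding a decomposition into $d\times 2 = 2d$ pieces. First, $\mad(H)\leq d$ implies that every subgraph of $H$ has a vertex of degree at most $d$, so $H$ is $d$-degenerate. I fix an acyclic orientation of $E(H)$ in which every vertex $v$ has out-degree at most $d$, and write $N^+(v)$ for the out-neighbourhood of $v$ under this orientation.

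Second, for each vertex $v$, the induced subgraph $H[N^+(v)]$ has at most $d$ vertices and, being a subgraph of $H$, is itself $d$-degenerate and hence properly vertex-colorable with at most $d$ colors. I fix such a proper coloring $\chi_v\colon N^+(v)\to[d]$ and label every oriented edge $v\to u$ with $\chi_v(u)\in[d]$. The key consequence is that for every vertex $v$ and every label $c\in[d]$, the set of out-neighbours of $v$ carrying label $c$ is independent in $H$; hence the ``outgoing $c$-star at $v$'', consisting of all edges $v\to u$ labelled $c$ together with its endpoints, is an \emph{induced} star of $H$. These outgoing $c$-stars, ranging over all $v$, edge-partition the set of $c$-labelled edges.

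Third, for each label $c\in[d]$ the goal is to cover the subgraph $H_c$ of $c$-labelled edges by just $2$ weak induced star forests; summing over the $d$ labels then yields the desired bound of $2d$. I would exploit the acyclic orientation inherited by $H_c$ to define a $2$-colouring of its vertices, for instance by the parity of depth in a suitable rooted structure, and place each outgoing $c$-star centred at a vertex of one parity into the first class and each star centred at the other parity into the second. Within each class the centres form an independent set in $H_c$, so the chosen outgoing $c$-stars are pairwise vertex-disjoint, and each is induced in $H$ by the previous step; consequently each class is a weak induced star forest of $H$.

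The main obstacle I foresee is that $H_c$ need not be a forest: although each vertex has out-degree at most $d$ in $H_c$, the in-degree can be unbounded, so two distinct outgoing $c$-stars may share a common leaf. As a result a naive depth-parity decomposition of a rooted spanning forest does not literally apply, and the $2$-colouring must be defined carefully on the full structure of $H_c$. Overcoming this---either by aligning the local colorings $\chi_v$ across $v$ so that such leaf collisions are avoided, or by replacing the parity argument with a direct $2$-colouring of an auxiliary ``conflict'' graph built from the out-neighbourhoods and shown to be bipartite---is the technical heart of the argument, and it is the step where the factor of $2$ in $2d$ is spent.
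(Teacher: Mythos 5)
The paper does not prove this lemma at all: it is imported verbatim as Theorem~7 of Axenovich, D\"orr, Rollin and Ueckerdt~\cite{axenovich2019induced}, so there is no in-paper argument to compare against and your proposal must stand on its own. Its first two steps are sound: $\mad(H)\leq d$ does give a $d$-degenerate graph, hence an acyclic orientation with out-degrees at most $d$, and properly colouring each $H[N^+(v)]$ with at most $d$ colours makes every ``outgoing $c$-star'' an induced star of $H$, with the $c$-stars over all centres partitioning the $c$-labelled edges.

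The genuine gap is the third step, which is where the whole content of the lemma lives and which you explicitly leave open. The claim that each $H_c$ splits into two weak induced star forests amounts to $2$-colouring the conflict graph whose vertices are the outgoing $c$-stars, two stars being in conflict when they share a vertex; this graph is not bipartite, and worse, it has unbounded clique number. Concretely, if a vertex $u$ has in-neighbours $v_1,\dots,v_k$ and each local colouring happens to assign $\chi_{v_i}(u)=c$ --- nothing in the construction prevents this, since the colourings $\chi_v$ are chosen independently of one another and the in-degree of $u$ is not controlled by the orientation --- then the $k$ stars centred at $v_1,\dots,v_k$ pairwise intersect in $u$ and form a $k$-clique in the conflict graph. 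So the per-label cost is not $2$ but can be arbitrarily large, and the route yields no bound of the form $2d$. Of your two suggested repairs, the bipartiteness of an auxiliary conflict graph is simply false by the example above, and ``aligning the local colourings $\chi_v$ across $v$'' to avoid leaf collisions is exactly the global coordination problem the theorem is about; no argument is given for why such an alignment exists. As it stands the proposal proves only that $H$ decomposes into $d$ classes of induced stars with uncontrolled overlaps, which is strictly weaker than the statement.
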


In fact, the condition of \cref{lem:mad_bounded_split_into_induced_star_forests} is also met if every induced subgraph of the host graph~$H$ can be covered with few sparse graphs.

\begin{lemma}
\label{lem:mad_G_bounded_bounded_folded_cov_for_induced_subgraphs_then_mad_H_bounded}
    Let~$\calG$ be a graph class with $\mad(\calG) \leq d$ and let~$H$ be a graph.
    If there exists a constant~$s$ such that for every induced subgraph~$H'$ of~$H$ we have $\cn{f}{\calG}{H'} \leq s$, then $\mad(H) \leq sd$.
\end{lemma}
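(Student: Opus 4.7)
The plan is a direct double-counting argument. First I would observe that the maximum average degree of $H$ is always attained on an \emph{induced} subgraph: for any $H' \subseteq H$ with $|V(H')| > 0$, the subgraph of $H$ induced by $V(H')$ has the same vertex count but at least as many edges, so its edge-to-vertex ratio is at least $|E(H')|/|V(H')|$. Hence there is an induced subgraph $H^\star$ of $H$ with $2|E(H^\star)|/|V(H^\star)| = \mad(H)$.

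Next, I would invoke the hypothesis to pick an $s$-local $\calG$-cover $\varphi \colon G_1 \cupdot \cdots \cupdot G_t \to H^\star$. Since every edge of $H^\star$ has at least one preimage under $\varphi$, we have
\[
    |E(H^\star)| \leq \sum_{i=1}^t |E(G_i)|.
\]
Each $G_i$ lies in $\calG$, so $\mad(G_i) \leq d$ and thus $|E(G_i)| \leq \frac{d}{2}|V(G_i)|$. On the other hand, $s$-locality of $\varphi$ gives
\[
    \sum_{i=1}^t |V(G_i)| = \sum_{v \in V(H^\star)} |\varphi^{-1}(v)| \leq s\,|V(H^\star)|.
\]

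Chaining these inequalities yields
\[
    |E(H^\star)| \leq \tfrac{d}{2} \sum_{i=1}^t |V(G_i)| \leq \tfrac{sd}{2}\,|V(H^\star)|,
\]
so that $\mad(H) = 2|E(H^\star)|/|V(H^\star)| \leq sd$, as claimed.

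The argument is essentially mechanical once the two key inputs — that $\mad(\cdot)$ is witnessed by an induced subgraph, and that an $s$-local $\calG$-cover uses at most $s|V(H^\star)|$ guest-vertices in total — are in place. There is no real obstacle; the only subtlety is being careful that $\varphi$ need not be injective in the folded setting, but this does not matter because the bound $|E(H^\star)| \leq \sum_i |E(G_i)|$ holds for any cover (injective or not) by edge-surjectivity of $\varphi$.
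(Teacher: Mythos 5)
Your proof is correct and follows essentially the same double-counting argument as the paper: bound the edges of an induced subgraph by the guest edges, use $\mad(\calG)\le d$ to convert to guest vertices, and use $s$-locality to convert back to host vertices. The only cosmetic difference is that you fix a single maximizing induced subgraph $H^\star$, whereas the paper runs the same computation uniformly over all induced subgraphs; both correctly rely on the fact that $\mad$ is witnessed by an induced subgraph.
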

\begin{proof}
    Let $H' \subseteq H$ be any induced subgraph of $H$ with $\abs{V(H')} > 0$ and let~$\varphi' \colon G'_1 \cupdot \cdots \cupdot G'_t \to H'$ be a (not necessarily injective) $s$-local $\calG$-cover of~$H'$.
    As each edge of~$H'$ is covered at least once in $\varphi'$ and each vertex of $H'$ is hit at most $s$~times in $\varphi'$, we obtain
    \[
        2\abs{E(H')} \leq 2\sum_{i=1}^t \abs{E(G'_i)} = \sum_{i=1}^t 2\abs{E(G'_i)} \leq \sum_{i=1}^t \mad(G'_i) \abs{V(G'_i)} \leq \sum_{i=1}^t d \abs{V(G'_i)} \leq s d \cdot \abs{V(H')}.
    \]
    In other words, $\frac{2\abs{E(H')}}{\abs{V(H')}} \leq sd$ for every induced subgraph $H'$ of $H$, and hence $\mad(H) \leq sd$.
\end{proof}

Finally, let us combine \cref{lem:folded_equal_union_for_stars_with_G_her,lem:mad_bounded_split_into_induced_star_forests} to show that for hereditary sparse guest classes $\calG$, every host class~$\calH$ is $(\cn{u}{\calG}{},\cn{f}{\calG}{})$-bounded.
 
\begin{theorem}
    \label{thm:union_bounded_by_folded_G_her_mad}
    If~$\calG$ is a hereditary guest class with $\mad(\calG) \leq d$, then for every graph~$H$ we have
    \[
        \cn{u}{\calG}{H} \leq 2d \cdot (\cn{f}{\calG}{H})^2.
    \]
    In particular, for such guest classes $\calG$, every host class $\calH$ is $(\cn{u}{\calG}{},\cn{f}{\calG}{})$-bounded.
\end{theorem}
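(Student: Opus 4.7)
The plan is to chain together the three preparatory lemmas of \cref{subsec:hereditary-sparse-guests} with the monotonicity of the folded $\calG$-covering number on induced and weak induced subgraphs given by \cref{lem:restrict-cover-to-subgraph}. Fix $H$ and set $s = \cn{f}{\calG}{H}$. The strategy is: first bootstrap the folded cover bound into a sparsity bound on $H$, then decompose $H$ into a controlled number of weak induced star forests, cover each such star forest in the union sense using the equality of folded and union covering numbers for star forests, and finally concatenate these partial covers.

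For the bootstrap, since $\calG$ is hereditary, \cref{lem:restrict-cover-to-subgraph}\,(ii) yields $\cn{f}{\calG}{H'} \leq s$ for every induced subgraph $H' \subseteq H$. Plugging this into \cref{lem:mad_G_bounded_bounded_folded_cov_for_induced_subgraphs_then_mad_H_bounded} gives $\mad(H) \leq sd$. I would then apply \cref{lem:mad_bounded_split_into_induced_star_forests} to obtain weak induced star forests $F_1, \ldots, F_k \subseteq H$ with $k \leq 2sd$ whose edges together cover $E(H)$. For each such $F_j$, \cref{lem:restrict-cover-to-subgraph}\,(iii)---applicable because $\calG$ is hereditary and $\cn{f}{\calG}{}$ is not the global covering number---delivers $\cn{f}{\calG}{F_j} \leq s$.

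The reduction to union covers happens on each $F_j$ individually. Since any hereditary class is also component-closed, items (ii) and (iii) of \cref{lem:folded_equal_union_for_stars_with_G_her} combine to give $\cn{u}{\calG}{F_j} = \cn{f}{\calG}{F_j} \leq s$. Thus each $F_j$ admits an injective $s$-global $\union{\calG}$-cover $\varphi_j \colon U_{j,1} \cupdot \cdots \cupdot U_{j,s} \to F_j$. Viewing the $\varphi_j$ as homomorphisms into $H$ (legitimate because $F_j \subseteq H$) and taking the disjoint union of all $ks$ guests $U_{j,i}$ yields an injective $\union{\calG}$-cover of $H$ with at most $ks \leq (2sd) \cdot s = 2d\,(\cn{f}{\calG}{H})^2$ graphs from $\union{\calG}$, which is the desired bound.

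I expect the main subtlety to lie not in the counting, which is essentially one line, but in cleanly invoking the right monotonicity for each inequality. In particular, one must make sure that \cref{lem:restrict-cover-to-subgraph}\,(iii) really transmits the folded bound from $H$ to each $F_j$ (which is only a weak induced subgraph, not an induced one), and that the $\union{\calG}$-covers of the individual $F_j$ can simply be concatenated without spoiling injectivity. Both points are where the hereditary assumption on $\calG$ is doing real work, so I would highlight them explicitly before carrying out the arithmetic.
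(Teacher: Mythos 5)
Your proposal is correct and follows essentially the same route as the paper's proof: bootstrap $\mad(H)\leq sd$ via \cref{lem:restrict-cover-to-subgraph} and \cref{lem:mad_G_bounded_bounded_folded_cov_for_induced_subgraphs_then_mad_H_bounded}, decompose into at most $2sd$ weak induced star forests via \cref{lem:mad_bounded_split_into_induced_star_forests}, transfer the folded bound to each forest via \cref{lem:restrict-cover-to-subgraph}\,(iii), and convert to union covers via \cref{lem:folded_equal_union_for_stars_with_G_her} before concatenating. Your explicit chaining of items (ii) and (iii) of \cref{lem:folded_equal_union_for_stars_with_G_her} (using that hereditary implies component-closed) is a slightly more detailed rendering of the step the paper states in one line, and is valid.
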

\begin{proof}
    Let~$H$ be any host graph and let $s = \cn{f}{\calG}{H}$.
    
    We first bound the mad of~$H$.
    For every induced subgraph~$H'$ of~$H$, we have $\cn{f}{\calG}{H'} \leq s$ by \cref{lem:restrict-cover-to-subgraph}.
    With \cref{lem:mad_G_bounded_bounded_folded_cov_for_induced_subgraphs_then_mad_H_bounded}, $\mad(H) \leq sd$ follows.
        
    By \cref{lem:mad_bounded_split_into_induced_star_forests}, $H$ is the union of at most $k\coloneqq 2(sd)$ weak induced star forests $F_1, \dots, F_k$.
    As each~$F_i$ is a weak induced subgraph of $H$ and $\calG$ is hereditary, \cref{lem:restrict-cover-to-subgraph} gives $\cn{f}{\calG}{F_i} \leq \cn{f}{\calG}{H} = s$ for $i=1,\ldots,k$.
    As~$\calG$ is hereditary, we obtain $\cn{f}{\calG}{F_i} = \cn{u}{\calG}{F_i}$ by \cref{lem:folded_equal_union_for_stars_with_G_her} and 
    \[
        \cn{u}{\calG}{H} \leq \sum_{i=1}^k \cn{u}{\calG}{F_i} =  \sum_{i=1}^k \cn{f}{\calG}{F_i} \leq  2(sd) s = 2d \cdot (\cn{f}{\calG}{H})^2
    \]
    follows.
\end{proof}

Observe that \cref{thm:union_bounded_by_folded_G_her_mad} gives the first claimed $(\cn{u}{\calG}{},\cn{l}{\calG}{})$-boundedness in \cref{thm:union-local-introduction} and the first claimed $(\cn{l}{\calG}{},\cn{f}{\calG}{})$-boundedness in \cref{thm:local-folded-introduction}.

\subsection{Hereditary Guest Classes and Host Classes of Bounded Chromatic Number}
\label{subsec:hereditary-guests-chromatic-hosts}

Here we consider hereditary guest classes $\calG$ combined with host classes $\calH$ of bounded chromatic number.
For example, $\calG$ could be the class of all perfect graphs~\cite{Ber61}, all string graphs~\cite{EET76}, or all $P_4$-free graphs (aka cographs)~\cite{Jun78,Sei74,Sum74}, while we can think of $\calH$ as the class of all $k$-colorable graphs for some fixed $k$.

Our approach is similar to the one in \cref{subsec:hereditary-sparse-guests}, but this time centered around the class $\calB = \{ G \setmid \chi(G) \leq 2\}$ of all bipartite graphs.
First, we prove that $\calB$ is $(\cn{u}{\calG}{},\cn{f}{\calG}{})$-bounded for every hereditary guest class $\calG$.
Second, we show that every $k$-colorable graph is the union of few bipartite graphs.
And finally, we combine both to show that every host class $\calH$ of bounded chromatic number is $(\cn{u}{\calG}{},\cn{f}{\calG}{})$-bounded for every hereditary guest class $\calG$.

\begin{lemma}
\label{lem:union_bounded_by_folded_for_bipartite_host_hereditary_guest}
    For every hereditary guest class~$\calG$ and every bipartite graph~$B$, we have 
    \[
        \cn{u}{\calG}{B} \leq \left(\cn{f}{\calG}{B}\right)^2.
    \]
\end{lemma}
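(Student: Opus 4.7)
Let $s = \cn{f}{\calG}{B}$ and fix an $s$-local $\calG$-cover $\varphi \colon G_1 \cupdot \cdots \cupdot G_t \to B$. Let $(A, C)$ be a bipartition of $B$. The plan is to partition the edges of $\bigcupdot G_i$ into at most $s^2$ induced subgraphs, each of which projects injectively into $B$ and lies in $\union{\calG}$.

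For every $v \in V(B)$, the preimage $\varphi^{-1}(v)$ has at most $s$ elements, so I can fix an arbitrary labeling $\ell \colon \varphi^{-1}(v) \to [s]$ for each $v$, performed independently for vertices in $A$ and in $C$. This associates to every vertex $x$ of $\bigcupdot G_i$ a label $\ell(x) \in [s]$. Since $B$ is bipartite, every edge of $\bigcupdot G_i$ has one endpoint mapping to $A$ and the other mapping to $C$. For each pair $(a, c) \in [s]^2$, let $V_{a,c}$ consist of those vertices $x$ of $\bigcupdot G_i$ with $\ell(x) = a$ if $\varphi(x) \in A$ and $\ell(x) = c$ if $\varphi(x) \in C$, and let $H_{a,c}$ be the subgraph of $\bigcupdot G_i$ induced by $V_{a,c}$.

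Three properties should be verified. First, the $H_{a,c}$ together cover all edges of $B$: given an edge $uv \in E(B)$ with $u \in A$, $v \in C$, some edge $xy$ of $\bigcupdot G_i$ maps to $uv$, and with $a = \ell(x)$, $c = \ell(y)$ we have $xy \in E(H_{a,c})$. Second, $\varphi$ restricted to $H_{a,c}$ is injective: on each side of the bipartition of $B$, every vertex has a unique preimage in $V_{a,c}$ (the one with the prescribed label), so no two vertices of $H_{a,c}$ collide. Third, $H_{a,c}$ lies in $\union{\calG}$: because $H_{a,c}$ is an induced subgraph of the vertex-disjoint union $G_1 \cupdot \cdots \cupdot G_t$, each connected component of $H_{a,c}$ is an induced subgraph of some $G_i$, hence in $\calG$ by heredity.

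Together, these three facts yield an injective $s^2$-global $\union{\calG}$-cover of $B$, which gives $\cn{u}{\calG}{B} \leq s^2 = (\cn{f}{\calG}{B})^2$. The only non-routine step is observing that bipartiteness of $B$ makes a single integer pair $(a, c) \in [s]^2$ suffice to pick a consistent set of preimages (one per side) — without the bipartition I would need to label preimages along a proper coloring, which would blow the bound up to $\chi(B)^2 \cdot \cdots$ instead of exactly $s^2$. This is precisely the mechanism that will generalize, in the next subsection, to hereditary guest classes with hosts of bounded chromatic number.
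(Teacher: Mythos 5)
Your proof is correct and takes essentially the same route as the paper's: your labeling $\ell$ is the paper's enumeration of the fibres $\varphi^{-1}(v)=\{v_1,\dots,v_r\}$, and your $H_{a,c}$ are exactly the paper's induced subgraphs $G_{i,j}$ on $X_i\cup Y_j$. One small point of precision: the labeling $\ell$ must be chosen injectively on each fibre $\varphi^{-1}(v)$ (which is possible since $\abs{\varphi^{-1}(v)}\leq s$), as otherwise the uniqueness claim in your injectivity step fails.
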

\begin{proof}
    Let $B = (V,E) \in \calB$ be a fixed bipartite graph.
    Consider an $s$-local $\calG$-cover $\varphi \colon G_1 \cupdot \cdots \cupdot G_t \to B$ with $s = \cn{f}{\calG}{B}$.
    Recall that $\varphi$ is not necessarily injective, but each vertex $v \in V$ is hit in total at most~$s$ times by the vertices in $G_1,\ldots,G_t$.
    We denote the preimages of~$v$ in $G \coloneqq G_1 \cupdot \cdots \cupdot G_t$ by $\varphi^{-1}(v) = \{v_1, \ldots, v_r\}$, where $r = \abs{\varphi^{-1}(v)}$.
    Next, we shall construct an injective $s^2$-global $\union{\calG}$-cover of~$B$, which then certifies that indeed $\cn{u}{\calG}{B} \leq s^2$, as desired.
    
    \proofsubparagraph{Construction of an injective $\bm{s^2}$-global $\bm{\union{\calG}}$-cover.} 
    Let~$X$ and $Y$ denote the partition classes of~$B$.
    For each $i \in [s]$, we set $X_i = \{ v_i \in V(G) \setmid v \in X\}$ and $Y_i = \{ v_i \in V(G) \setmid v \in Y\}$.
    Note that $X_1,\ldots,X_s,Y_1,\ldots,Y_s$ form a partition of the vertices in $G = G_1 \cupdot \cdots \cupdot G_t$.
    Clearly, we have~$G \in \union{\calG}$, i.e., the graph $G$ lies in the union-closure of~$\calG$.
    
    For every two integers~$i,j \in [s]$, we define the graph~$G_{i,j}$ as the subgraph of~$G$ induced by the vertex set~$X_i \cup Y_j$.
    As $\calG$ is hereditary, also $G_{i,j}$ is in the union-closure of $\calG$, i.e., $G_{i,j} \in \union{\calG}$.
    We obtain a homomorphism $\varphi' \colon G_{1,1} \cupdot \cdots \cupdot G_{s,s} \to B$ by setting $\varphi'|_{G_{i,j}} = \varphi|_{G_{i,j}}$ for each $i,j \in [s]$.

    \proofsubparagraph{The homomorphism $\bm{\varphi'}$ is an injective $\bm{s^2}$-global $\bm{\union{\calG}}$-cover of $\bm{B}$.}
    We first observe that~$\varphi'$ is a cover of~$B$.
    Consider any edge $uv$ in $B$, say with $u \in X$ and $v \in Y$.
    As $\varphi\colon G \to B$ is a cover of $B$, there exists some edge $u'v'$ in $G$ with $\varphi(u') = u$ and $\varphi(v') = v$.
    Then $u' = u_i$ for some $i \in [s]$ and $v' = v_j$ for some $j \in [s]$.
    It follows that $u' \in X_i$ and $v' \in Y_j$.
    In particular, $u'v'$ is an edge of $G_{i,j}$ and $\varphi'$ maps $u'v'$ onto $uv$.
    
    To see that $\varphi'$ is injective, we observe that each graph~$G_{i,j}$ hits every vertex $v \in V$ at most once. 
    In fact, every vertex~$v \in V$ belongs to only one of~$X,Y$. 
    And as~$X_i$ (and $Y_i$ respectively) contains only one copy of~$v$, at most one vertex in~$G_{i,j}$ is mapped to~$v$.

    Finally, there are only $s^2$ many graphs $G_{i,j}$.
    Hence $\varphi'$ is $s^2$-global.
\end{proof}

So, \cref{lem:union_bounded_by_folded_for_bipartite_host_hereditary_guest} gives that the class $\calB$ of all bipartite graphs is $(\cn{u}{\calG}{},\cn{f}{\calG}{})$-bounded for every hereditary guest class $\calG$.

Now, let us prove that every $k$-chromatic graph is the union of few induced bipartite subgraphs.

\begin{lemma} 
    \label{lem:cover_with_induced_bipartite_graphs}
    If $H$ is a graph with $\chi(H) \leq k$, then $H$ is the union of at most $\binom{k}{2}$ induced bipartite subgraphs.
\end{lemma}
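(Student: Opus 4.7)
The plan is to fix a proper $k$-coloring of $H$ and use pairs of color classes to produce the desired induced bipartite subgraphs.

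Concretely, I would first take a proper coloring $c \colon V(H) \to [k]$ with color classes $V_1, \ldots, V_k$, each of which is an independent set by definition of proper coloring. Then for each pair $\{i,j\} \in \binom{[k]}{2}$, I would consider the induced subgraph $B_{i,j} \coloneqq H[V_i \cup V_j]$. Since $V_i$ and $V_j$ are both independent in $H$, every edge of $B_{i,j}$ goes between $V_i$ and $V_j$, so $B_{i,j}$ is bipartite (with bipartition $V_i, V_j$).

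Next I would check that the $\binom{k}{2}$ graphs $B_{i,j}$ cover all edges of $H$. For any edge $uv \in E(H)$, properness of the coloring forces $c(u) \neq c(v)$; setting $\{i,j\} = \{c(u),c(v)\}$, both $u$ and $v$ lie in $V_i \cup V_j$, so $uv \in E(B_{i,j})$. Thus $E(H) = \bigcup_{1 \leq i < j \leq k} E(B_{i,j})$, i.e., $H$ is the union of these $\binom{k}{2}$ induced bipartite subgraphs.

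There is no real obstacle here; the statement is an essentially immediate consequence of the definition of a proper $k$-coloring, and the bound $\binom{k}{2}$ comes out exactly as the number of unordered pairs of color classes.
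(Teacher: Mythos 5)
Your argument is correct and is exactly the paper's proof: fix a proper $k$-coloring, take the subgraph induced by each pair of color classes, observe each is bipartite since the classes are independent, and note every edge lies between two distinct classes. Nothing to add.
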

\begin{proof}
    Consider a proper $k$-coloring of~$H$. 
    As every color class is an independent set, any two color classes induce a bipartite subgraph of~$H$.
    These $\binom{k}{2}$ induced bipartite subgraphs clearly partition the edge-set of~$H$.
\end{proof}

Finally, let us combine \cref{lem:union_bounded_by_folded_for_bipartite_host_hereditary_guest,lem:cover_with_induced_bipartite_graphs} to show that for hereditary guest classes $\calG$, every host class~$\calH$ of bounded chromatic number is $(\cn{u}{\calG}{},\cn{f}{\calG}{})$-bounded.

\begin{theorem}
\label{thm:union_bounded_by_folded_G_hereditary_H_chi}
    If $\calG$ is hereditary, then we have for every graph~$H$
    \[
        \cn{u}{\calG}{H}\leq \chi(H)^2 \cdot \cn{f}{\calG}{H}^2.
    \] 
    In particular, for hereditary guest classes $\calG$, every host class $\calH$ of bounded chromatic number is $(\cn{u}{\calG}{},\cn{f}{\calG}{})$-bounded.
\end{theorem}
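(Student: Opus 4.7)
The plan is to combine the two lemmas established just above, namely \cref{lem:union_bounded_by_folded_for_bipartite_host_hereditary_guest} (union bounded by squared folded on bipartite hosts) and \cref{lem:cover_with_induced_bipartite_graphs} ($k$-chromatic graphs decompose into $\binom{k}{2}$ induced bipartite subgraphs), together with the monotonicity from \cref{lem:restrict-cover-to-subgraph}\eqref{item:cover_ind_subgraph_G_her}.

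First, set $k = \chi(H)$ and $s = \cn{f}{\calG}{H}$. I would apply \cref{lem:cover_with_induced_bipartite_graphs} to fix a family $B_1,\ldots,B_r$ of induced bipartite subgraphs of $H$ with $r \leq \binom{k}{2}$ whose union is $H$. Since $\calG$ is hereditary, \cref{lem:restrict-cover-to-subgraph}\eqref{item:cover_ind_subgraph_G_her} gives $\cn{f}{\calG}{B_i} \leq \cn{f}{\calG}{H} = s$ for each $i \in [r]$, and then \cref{lem:union_bounded_by_folded_for_bipartite_host_hereditary_guest} yields $\cn{u}{\calG}{B_i} \leq s^2$.

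Next, for each $i \in [r]$ I would fix an injective $s^2$-global $\union{\calG}$-cover $\varphi_i$ of $B_i$, which witnesses $\cn{u}{\calG}{B_i} \leq s^2$. The key observation is that injectivity is preserved when we concatenate these: each $\varphi_i$ identifies its guests with subgraphs of $B_i \subseteq H$, so the combined homomorphism obtained by taking the vertex-disjoint union of all the guests of $\varphi_1,\ldots,\varphi_r$ and mapping each according to its $\varphi_i$ is an injective cover of $H$ (using that the $B_i$ together cover every edge of $H$). This combined map uses at most $r \cdot s^2 \leq \binom{k}{2} s^2 \leq k^2 s^2$ graphs from $\union{\calG}$, giving $\cn{u}{\calG}{H} \leq \chi(H)^2 \cdot \cn{f}{\calG}{H}^2$.

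The ``in particular'' statement is then immediate from \cref{def:functionally-bounded}: if $\calH$ has $\chi(\calH) \leq C$, then $f(n) = C^2 n^2$ is a binding function for $(\cn{u}{\calG}{},\cn{f}{\calG}{})$, and since $\cn{l}{\calG}{} \geq \cn{f}{\calG}{}$ this also yields binding functions for $(\cn{u}{\calG}{},\cn{l}{\calG}{})$ and $(\cn{l}{\calG}{},\cn{f}{\calG}{})$. I do not anticipate a real obstacle here: the two input lemmas already do the substantive work, and the only minor point to check carefully is that the concatenation of injective $\union{\calG}$-covers of subgraphs of $H$ remains an injective $\union{\calG}$-cover of $H$, which follows because distinct $\varphi_i$ have vertex-disjoint domains and each individually maps its guests injectively onto subgraphs of $H$.
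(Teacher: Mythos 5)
Your proposal is correct and follows exactly the paper's own argument: decompose $H$ into $\binom{\chi(H)}{2}$ induced bipartite subgraphs via \cref{lem:cover_with_induced_bipartite_graphs}, push the folded cover down to each piece using \cref{lem:restrict-cover-to-subgraph}, apply \cref{lem:union_bounded_by_folded_for_bipartite_host_hereditary_guest} to each, and concatenate the resulting $\union{\calG}$-covers. Your extra check that injectivity survives the concatenation is a fine (if routine) addition; no gaps.
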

\begin{proof}
    Let $s \coloneqq \cn{f}{\calG}{H}$ and $k \coloneqq \chi(H)$.
    By \cref{lem:cover_with_induced_bipartite_graphs}, there are $\binom{k}{2} \leq k^2$ bipartite induced subgraphs $B_1,\ldots,B_{k^2}$ of $H$ whose union is $H$.
    As each~$B_i$ is an induced subgraph of~$H$ and~$\calG$ is hereditary, \cref{lem:restrict-cover-to-subgraph} gives $\cn{f}{\calG}{B_i} \leq \cn{f}{\calG}{H} = s$ for $i=1,\ldots,k^2$.
    An application of \cref{lem:union_bounded_by_folded_for_bipartite_host_hereditary_guest} yields $\cn{u}{\calG}{B_i} \leq s^2$ for $i=1,\ldots,k^2$.
    Combining the $s^2$-global $\union{\calG}$-covers of $B_1,\ldots,B_{k^2}$ yields an $(k^2s^2)$-global $\union{\calG}$-cover of~$H$, i.e., $\cn{u}{\calG}{H} \leq k^2s^2$.
\end{proof}

Observe that \cref{thm:union_bounded_by_folded_G_hereditary_H_chi} gives the second claimed $(\cn{u}{\calG}{},\cn{l}{\calG}{})$-boundedness in \cref{thm:union-local-introduction} and the second claimed $(\cn{l}{\calG}{},\cn{f}{\calG}{})$-boundedness in \cref{thm:local-folded-introduction}.

\bigskip

Let us close this section by improving the bound $\cn{u}{\calG}{H} \leq \chi(H)^2 \cn{f}{\calG}{H}^2$ for hereditary guest classes $\calG$ in \cref{thm:union_bounded_by_folded_G_hereditary_H_chi} from quadratic in $\chi(H)$ to logarithmic in $\chi(H)$, provided that $\calG$ is monotone (and not only hereditary).
In fact, recall that by \cref{lem:cover_with_induced_bipartite_graphs} every $k$-chromatic graph~$H$ is the union of at most $\binom{k}{2}$ induced bipartite subgraphs.
Of course, the $\binom{k}{2}$ is best-possible when $H = K_k$.
However, if we do not require the bipartite graphs to be induced subgraphs of $H$, then $\log(k)$ is actually enough (and also best-possible).

\begin{lemma}[{Harary, Hsu, Miller~\cite{Harary1977_biparticity}}] 
    \label{lem:cover_with_log_chi_bipartite_graphs}
    Let~$\calB$ be the class of all bipartite graphs.
    For every graph~$H$, we have
    \[
        \cn{u}{\calB}{H} = \big\lceil\log\chi(H)\big\rceil.
    \]
\end{lemma}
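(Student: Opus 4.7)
The plan is to exploit that the class $\calB$ of bipartite graphs is union-closed (a vertex-disjoint union of bipartite graphs is again bipartite), so $\union{\calB} = \calB$ and hence $\cn{u}{\calB}{H}$ is simply the minimum number $t$ of bipartite subgraphs of $H$ whose union is $H$ (sometimes called the \emph{biparticity} of $H$). I would then establish the two matching inequalities.

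For the upper bound $\cn{u}{\calB}{H} \leq \lceil \log \chi(H) \rceil$, I fix a proper coloring $c \colon V(H) \to \{0,1,\ldots,\chi(H)-1\}$ and set $t = \lceil \log \chi(H) \rceil$, so that each color admits a representation by $t$ binary digits. For every $i \in \{0,\ldots,t-1\}$ I partition $V(H)$ into the two sets $V_i^0, V_i^1$ of vertices whose $i$-th bit of $c(v)$ equals $0$, respectively $1$, and let $B_i$ be the subgraph of $H$ consisting of all edges between $V_i^0$ and $V_i^1$. Each $B_i$ is bipartite by construction, and any edge $uv \in E(H)$ satisfies $c(u) \neq c(v)$, so their binary representations differ in at least one bit $i$ and hence $uv \in E(B_i)$. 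Thus $B_0,\ldots,B_{t-1}$ cover $H$.

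For the lower bound $\cn{u}{\calB}{H} \geq \lceil \log \chi(H) \rceil$, I take any decomposition $E(H) \subseteq E(B_1) \cup \cdots \cup E(B_t)$ into bipartite subgraphs $B_i \subseteq H$, pick a proper $2$-coloring $c_i \colon V(H) \to \{0,1\}$ of each $B_i$ (extending $c_i$ arbitrarily on vertices not incident to $B_i$), and form the product map $c(v) = (c_1(v),\ldots,c_t(v)) \in \{0,1\}^t$. For any edge $uv \in E(H)$ there is some $i \in [t]$ with $uv \in E(B_i)$, and then $c_i(u) \neq c_i(v)$, so $c(u) \neq c(v)$. Thus $c$ is a proper coloring of $H$ using at most $2^t$ colors, giving $\chi(H) \leq 2^t$, i.e.\ $t \geq \lceil \log \chi(H) \rceil$ since $t$ is an integer.

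I do not anticipate a genuine obstacle: both directions rest on the familiar \emph{binary encoding of colors} trick and its dual, and the degenerate case $\chi(H) = 1$ is handled automatically, since then $\lceil \log 1 \rceil = 0$ and $H$ has no edges to cover.
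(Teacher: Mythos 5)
Your proof is correct and complete. Note that the paper itself gives no proof of this lemma --- it is imported verbatim from Harary, Hsu and Miller --- and your argument (binary encoding of the colors of a proper $\chi(H)$-coloring for the upper bound, and the product of the $2$-colorings of the covering bipartite subgraphs for the lower bound) is exactly the classical proof of that result. Your opening observation is also the right one to make explicit: since $\calB$ is union-closed, $\cn{u}{\calB}{H}$ coincides with the minimum number of bipartite subgraphs whose union is $H$, i.e.\ the biparticity, which is what the two inequalities then pin down.
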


Using \cref{lem:cover_with_log_chi_bipartite_graphs} with monotone guest classes $\calG$, instead of \cref{lem:cover_with_induced_bipartite_graphs} with hereditary guest classes $\calG$, the same argumentation as for \cref{thm:union_bounded_by_folded_G_hereditary_H_chi} gives slightly better bounds.

\begin{proposition}
    \label{thm:union_bounded_by_folded_G_monotone_H_chi}
    If~$\calG$ is monotone, then we have for every graph~$H$
    \[
        \cn{u}{\calG}{H} \leq \big\lceil\log \chi(H)\big\rceil \cdot \cn{f}{\calG}{H}^2.
    \]
\end{proposition}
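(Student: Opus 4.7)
The plan is to follow the same blueprint as the proof of \cref{thm:union_bounded_by_folded_G_hereditary_H_chi}, but swap in the stronger decomposition result \cref{lem:cover_with_log_chi_bipartite_graphs} in place of \cref{lem:cover_with_induced_bipartite_graphs}. Setting $s = \cn{f}{\calG}{H}$ and $k = \chi(H)$, by \cref{lem:cover_with_log_chi_bipartite_graphs} there is an injective $\lceil \log k \rceil$-global $\calB$-cover of $H$, i.e., $H$ is the union of at most $\lceil \log k \rceil$ bipartite subgraphs $B_1, \dots, B_{\lceil \log k \rceil}$. Here $\calB$ is union-closed, so these graphs are bipartite subgraphs of $H$, but they need not be induced subgraphs of~$H$.

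This is precisely the point where the assumption that $\calG$ is monotone (rather than merely hereditary) is needed. Because each $B_i$ is a (not necessarily induced) subgraph of~$H$, monotonicity of~$\calG$ lets us apply \cref{lem:restrict-cover-to-subgraph}\eqref{item:cover_subgraph_G_mon} to conclude $\cn{f}{\calG}{B_i} \leq \cn{f}{\calG}{H} = s$ for every $i$. Since monotone implies hereditary, \cref{lem:union_bounded_by_folded_for_bipartite_host_hereditary_guest} then applies to the bipartite host $B_i$, yielding $\cn{u}{\calG}{B_i} \leq \cn{f}{\calG}{B_i}^2 \leq s^2$.

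Finally, one combines the $\lceil \log k \rceil$ resulting $s^2$-global injective $\union{\calG}$-covers of the $B_i$ (taking the vertex-disjoint union of the covers) into a single $\lceil \log k \rceil \cdot s^2$-global injective $\union{\calG}$-cover of~$H$. This gives
\[
    \cn{u}{\calG}{H} \leq \lceil \log \chi(H) \rceil \cdot \cn{f}{\calG}{H}^2,
\]
as claimed. There is no real obstacle here; the only subtlety worth flagging is the reason monotonicity (not just hereditariness) is required, namely that the bipartite graphs produced by \cref{lem:cover_with_log_chi_bipartite_graphs} are arbitrary subgraphs of~$H$ rather than induced ones, so the restriction argument in \cref{lem:restrict-cover-to-subgraph} needs its monotone variant.
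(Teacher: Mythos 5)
Your proof is correct and follows exactly the paper's own argument: decompose $H$ into $\lceil \log \chi(H) \rceil$ bipartite subgraphs via \cref{lem:cover_with_log_chi_bipartite_graphs}, use monotonicity with \cref{lem:restrict-cover-to-subgraph}\eqref{item:cover_subgraph_G_mon} to bound $\cn{f}{\calG}{B_i}$, apply \cref{lem:union_bounded_by_folded_for_bipartite_host_hereditary_guest}, and combine the covers. Your remark on why monotonicity (rather than mere hereditariness) is needed matches the paper's discussion preceding the proposition.
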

\begin{proof}
    Let $s \coloneqq \cn{f}{\calG}{H}$ and $k \coloneqq \chi(H)$.
    By \cref{lem:cover_with_log_chi_bipartite_graphs}, there are $x \coloneqq \lceil \log k \rceil$ bipartite subgraphs $B_1,\ldots,B_x$ of $H$ whose union is $H$.
    As each~$B_i$ is a subgraph of~$H$ and~$\calG$ is monotone, \cref{lem:restrict-cover-to-subgraph} gives $\cn{f}{\calG}{B_i} \leq \cn{f}{\calG}{H} = s$ for $i=1,\ldots,x$.
    An application of \cref{lem:union_bounded_by_folded_for_bipartite_host_hereditary_guest} yields $\cn{u}{\calG}{B_i} \leq s^2$ for $i=1,\ldots,x$.
    Combining the $s^2$-global $\union{\calG}$-covers of $B_1,\ldots,B_x$ yields an $(xs^2)$-global $\union{\calG}$-cover of~$H$, i.e., $\cn{u}{\calG}{H} \leq xs^2$.
\end{proof}

\subsection{Hereditary Guest Classes and Host Classes of Unbounded Chromatic Number}
\label{subsec:G-her-H-unbounded-X}

In the previous section, we proved that for every hereditary guest class~$\calG$ any host class~$\calH$ of bounded chromatic number is $(\cn{u}{\calG}{},\cn{f}{\calG}{})$-bounded. 
Here, we shall show that the condition on the chromatic number of the host class~$\calH$ cannot be dropped:
\begin{itemize}
    \item In \cref{subsec:G-her-union-local-separation}, we give an example of a monotone guest class~$\calG$ with bounded chromatic number and a host class~$\calH$ that is not $(\cn{u}{\calG}{},\cn{l}{\calG}{})$-bounded; cf. \cref{lem:sep_local_union_H_mon_G_mon_chi}.
    \item In \cref{subsec:G-her-local-folded-separation}, we give an example of a monotone guest class~$\calG$ with bounded chromatic number and a host class~$\calH$ that is not $(\cn{l}{\calG}{},\cn{f}{\calG}{})$-bounded; cf. \cref{lem:sep_folded-folded_G_mon_chi_H_mon}.
\end{itemize}

\subsubsection{Separating Union and Local Covering Number}\label{subsec:G-her-union-local-separation}

Here we consider the guest class~$\calB$ of all bipartite graphs and the host class~$\calH$ of all shift graphs. 
For a directed graph~$D$, the \emph{shift graph}~$\shift(D)$ is the undirected graph whose vertices correspond to the edges of~$D$ and where two vertices~$uv,xy \in E(D)$ are adjacent if and only if~$v=x$, see \cref{fig:shift_graph_B_v} for an example. 
We shall show that the class of shift graphs~$\calH$ is not $(\cn{u}{\calB}{},\cn{l}{\calB}{})$-bounded.
First, we observe that the local $\calB$-covering number of the class of shift graphs is bounded.

\begin{lemma}
    \label{lem:localShiftCover}
    If~$H$ is a shift graph, then $\cn{l}{\calB}{H} \leq 2$, where $\calB$ denotes the class of all bipartite graphs.
\end{lemma}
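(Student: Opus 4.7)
The plan is to construct an explicit $2$-local injective $\calB$-cover that exploits the definition of a shift graph. Write $H = \shift(D)$ for some directed graph $D$, so $V(H) = E(D)$ and every edge of $H$ has the form $(uv, vy)$ for some $uv, vy \in E(D)$ sharing a ``middle vertex'' $v \in V(D)$. The crucial observation is that this middle vertex is uniquely determined by the edge of $H$, which gives a canonical partition of $E(H)$ indexed by $V(D)$.

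For each $v \in V(D)$, I would define $B_v$ to be the subgraph of $H$ whose edges are exactly those pairs $(uv, vy) \in E(H)$ with middle vertex $v$, and whose vertex set is $N^-_D(v) \cup N^+_D(v)$ viewed inside $V(H) = E(D)$. Since every edge of $B_v$ joins an in-edge of $v$ to an out-edge of $v$ in $D$, the bipartition $(N^-_D(v), N^+_D(v))$ witnesses $B_v \in \calB$. Letting $\varphi$ be the inclusion of the vertex-disjoint union of all $B_v$ into $H$, edge-surjectivity follows from the canonical partition, and injectivity of each $\varphi|_{B_v}$ is immediate because $B_v \subseteq H$ and $\varphi|_{B_v}$ is inclusion. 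Finally, $2$-locality holds because a vertex $uv \in V(H) = E(D)$ lies in $B_w$ only when $w \in \{u, v\}$, so $|\varphi^{-1}(uv)| \leq 2$, giving $\cn{l}{\calB}{H} \leq 2$. I do not anticipate any real obstacle here; the argument is essentially combinatorial bookkeeping and all three verifications fall out of the ``middle vertex'' assignment.
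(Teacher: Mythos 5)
Your proposal is correct and follows essentially the same route as the paper: for each $v \in V(D)$ you take the bipartite subgraph $B_v$ spanned by the in-edges and out-edges of $v$ (which is exactly the complete bipartite graph the paper uses), cover via inclusion, and observe that a vertex $uv$ of $\shift(D)$ lies only in $B_u$ and $B_v$. The only nitpick is that the ``middle vertex'' of an edge of $H$ need not be unique when $D$ contains a digon, so the $B_v$ give a cover rather than a partition of $E(H)$ --- but this does not affect edge-surjectivity, injectivity, or $2$-locality.
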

\begin{proof}
    Let~$D$ be a directed graph such that $H = \shift(D)$. 
    We construct an injective $2$-local $\calB$-cover of~$H = \shift(D)$, which then certifies $\cn{l}{\calB}{H} \leq 2$.

    For every vertex~$v \in V(D)$, we partition the set of incident edges into the incoming edges~$E_{\mathrm{in}}(v) = \set{uv \in E(D)}$ and outgoing edges~$E_{\mathrm{out}}(v) = \set{vu \in E(D)}$.
    The edges~$E_{\mathrm{in}}(v) \cup E_{\mathrm{out}}(v)$ correspond to vertices in $\shift(D)$ that induce a complete bipartite graph~$B_v$ in~$\shift(D)$, see \cref{fig:shift_graph_B_v} for an example.
    
    \begin{figure}
        \centering
        \includegraphics[page=2]{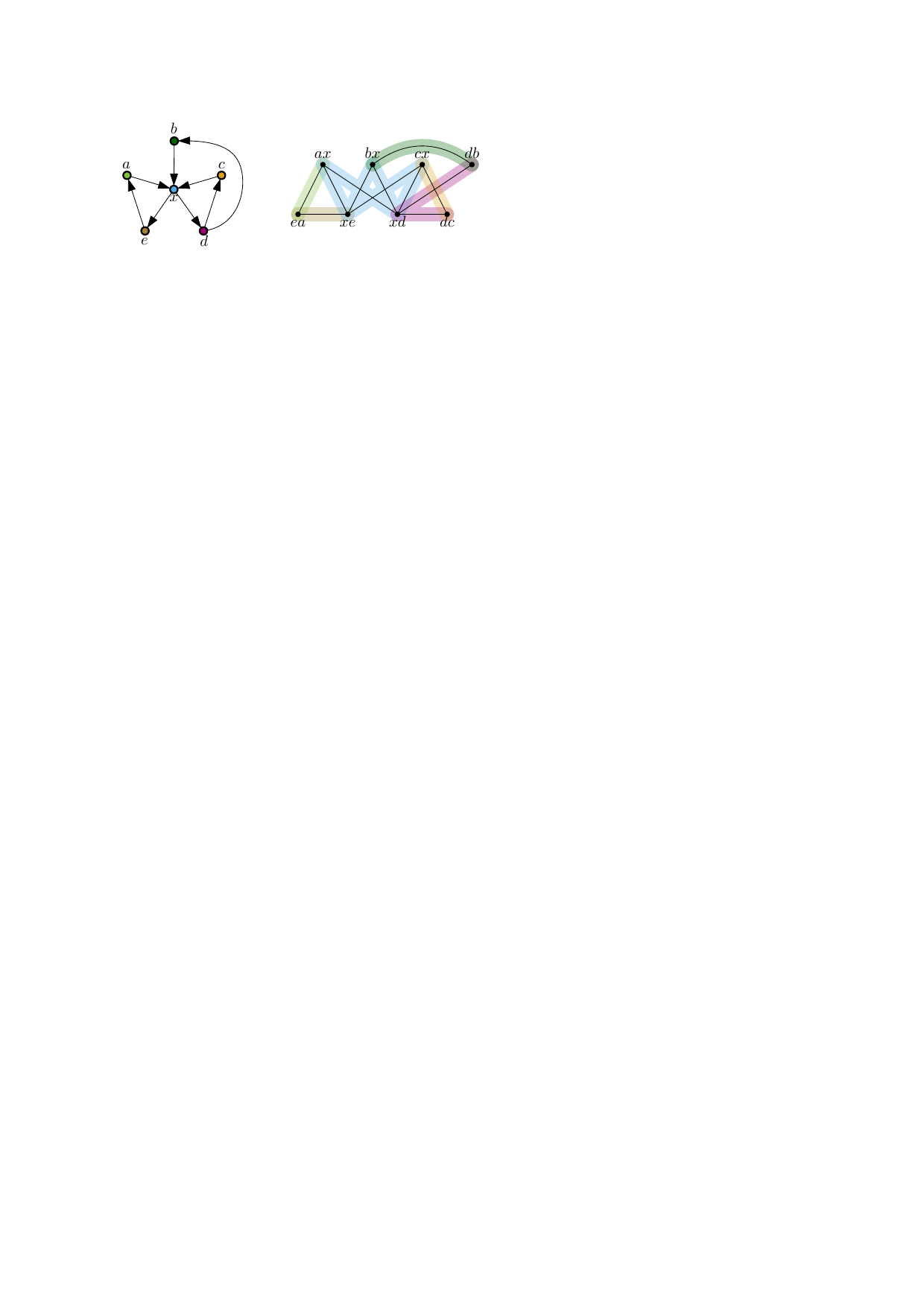}
        \caption{A directed graph~$D$ (left) and the corresponding shift graph~$\shift(D)$ (right). The complete bipartite graph~$B_x \subseteq \shift(D)$ (blue) is induced by the vertices of~$\shift(D)$ corresponding to the incoming and outgoing edges of~$x$.}
        \label{fig:shift_graph_B_v}
    \end{figure}
    
    Now let~$V(D) = \set{1, \dots, n}$.
    The inclusion of the subgraphs~$B_i$ into~$\shift(D)$ yields an injective homomorphism~$\varphi\colon B_1 \cupdot \dots \cupdot B_n \to \shift(D)$.
    
    We first observe that $\varphi$ is a cover of~$\shift(D)$.
    Consider any edge~$e$ in~$\shift(D)$ and let~$uv$ and~$xy$ denote its endpoints.
    Recall that these endpoints correspond to edges in the directed graph~$D$.
    By definition of~$\shift(D)$, we have $v=x$. 
    Thus, the edge~$e$ is covered by the bipartite graph~$B_v$.

    As every vertex~$x$ in~$\shift(D)$ corresponds to an edge~$uv$ in~$D$, the vertex~$x =uv$ is only contained in the cover graphs~$B_u$ and $B_v$.
    Hence, $\varphi$ is $2$-local.
\end{proof}

While the local $\calB$-covering number of shift graphs is small, their chromatic number can be arbitrarily large.

\begin{lemma}[Lov\'asz~{\cite[Problem 9.26 (a)]{lovasz1993combinatorial}}] 
    \label{lem:shiftChrom}
    For every directed graph~$D$ we have $\chi(\shift(D))\geq \log\chi(D)$.
\end{lemma}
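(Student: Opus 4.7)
The plan is to prove the contrapositive in quantitative form: starting from a proper $k$-coloring of $\shift(D)$, I will construct a proper $2^k$-coloring of $D$, which then yields $\chi(D) \leq 2^{\chi(\shift(D))}$ and hence $\chi(\shift(D)) \geq \log \chi(D)$ after taking logarithms.

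The construction is short and exploits the direction of the edges of $D$. Let $c \colon E(D) \to [k]$ be a proper $k$-coloring of $\shift(D)$, where $k = \chi(\shift(D))$. Define a vertex coloring $f \colon V(D) \to 2^{[k]}$ by setting
\[
    f(v) = \{\, c(uv) \setmid uv \in E(D) \,\},
\]
that is, $f(v)$ is the set of colors appearing on incoming edges of $v$ in $D$. The range of $f$ has size at most $2^k$, so it remains to check that $f$ is a proper coloring of the underlying undirected graph of $D$.

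Consider an arbitrary directed edge $uv \in E(D)$. By definition, $c(uv) \in f(v)$. I claim that $c(uv) \notin f(u)$: if it were, there would exist an edge $wu \in E(D)$ with $c(wu) = c(uv)$, but then $wu$ and $uv$ are adjacent vertices in $\shift(D)$ (they share the middle vertex $u$, matching the adjacency rule of the shift graph), so $c$ would assign them different colors, a contradiction. Therefore $f(u) \neq f(v)$, as the color $c(uv)$ witnesses an asymmetric difference between these two subsets of $[k]$.

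There is no real obstacle here; the only thing to watch is the edge case where some vertex has no incoming edges and is thus assigned $f(v) = \emptyset$, but this does not break the argument since the above claim still shows $f(u) \neq f(v)$ for every directed edge $uv$. Consequently $f$ is a proper coloring of $D$ with at most $2^{\chi(\shift(D))}$ colors, giving $\chi(D) \leq 2^{\chi(\shift(D))}$ and hence the desired inequality $\chi(\shift(D)) \geq \log \chi(D)$.
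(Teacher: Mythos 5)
Your proof is correct and is exactly the standard argument for this classical fact (the paper itself gives no proof, citing Lov\'asz instead): coloring each vertex by the set of colors on its incoming edges is proper because $c(uv)$ lies in $f(v)$ but not in $f(u)$, yielding $\chi(D) \leq 2^{\chi(\shift(D))}$. No gaps; the empty-set edge case is handled correctly.
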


As there are shift graphs of arbitrarily large chromatic number, we conclude with \cref{lem:cover_with_log_chi_bipartite_graphs} that the union $\calB$-covering number of the class of shift graphs is unbounded, while the local $\calB$-covering number is bounded (cf. \cref{lem:localShiftCover}).
Hence we have the following.

\begin{proposition}
    \label{lem:sep_local_union_H_mon_G_mon_chi}
    For the class~$\calB$ of all bipartite graphs and the class~$\calH$ of all shift graphs, we have 
    \[
        \cn{u}{\calG}{\calH} = \infty, \qquad \cn{l}{\calG}{\calH} \leq 2.
    \]
    In particular, there is a monotone class~$\calG$ of bounded chromatic number and a host class~$\calH$ that is not ($\cn{u}{\calG}{},\cn{l}{\calG}{})$-bounded.
\end{proposition}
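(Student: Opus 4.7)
The statement combines two bounds, $\cn{l}{\calB}{\calH} \leq 2$ and $\cn{u}{\calB}{\calH} = \infty$, from which the ``In particular'' clause follows because $\calB$ is monotone with $\chi(\calB) = 2$. The first inequality is a direct consequence of \cref{lem:localShiftCover} applied to each $H \in \calH$, so the entire content of the proposition sits in the second claim.

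My plan is to produce, for every $k \in \N$, a shift graph $H_k \in \calH$ with $\cn{u}{\calB}{H_k} \geq k$. The two ingredients are already in the excerpt: \cref{lem:shiftChrom} gives $\chi(\shift(D)) \geq \log \chi(D)$ for every directed graph $D$, and \cref{lem:cover_with_log_chi_bipartite_graphs} gives $\cn{u}{\calB}{H} = \lceil \log \chi(H) \rceil$ for every graph $H$. Chaining the two yields
\[
    \cn{u}{\calB}{\shift(D)} = \lceil \log \chi(\shift(D)) \rceil \geq \log \log \chi(D).
\]
Hence I would let $D_k$ be an arbitrary orientation of the complete graph on $n = 2^{2^k}$ vertices, so that $\chi(D_k) = n$, and set $H_k = \shift(D_k) \in \calH$; the chained inequality then gives $\cn{u}{\calB}{H_k} \geq k$, and letting $k \to \infty$ yields $\cn{u}{\calB}{\calH} = \infty$.

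Combining $\cn{l}{\calB}{\calH} \leq 2$ with $\cn{u}{\calB}{\calH} = \infty$ precludes any binding function, so $\calH$ is not $(\cn{u}{\calB}{},\cn{l}{\calB}{})$-bounded, and $\calB$ serves as the required monotone guest class of bounded chromatic number. There is no real obstacle in the argument: both structural lemmas are used off the shelf, and the only calibration needed is the doubly exponential choice of $\abs{V(D_k)}$ to absorb the two consecutive ``$\log$'' losses incurred by \cref{lem:shiftChrom} and \cref{lem:cover_with_log_chi_bipartite_graphs}.
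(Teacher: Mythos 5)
Your proposal is correct and follows the paper's own route exactly: the local bound comes from \cref{lem:localShiftCover}, and the unboundedness of $\cn{u}{\calB}{}$ comes from chaining \cref{lem:shiftChrom} with \cref{lem:cover_with_log_chi_bipartite_graphs} on shift graphs of orientations of large complete graphs. The only difference is that you make the doubly exponential choice of $\abs{V(D_k)}$ explicit, which the paper leaves implicit.
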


\subsubsection{Separating Local and Folded Covering Number}\label{subsec:G-her-local-folded-separation}
 
Here, we consider the guest class~$\calB$ of all bipartite graphs and the host class~$\calComp$ of all complete graphs.
We shall show that the class~$\calComp$ of all complete graphs is not $(\cn{u}{\calB}{},\cn{f}{\calB}{})$-bounded.
First, we show that the local $\calB$-covering number of complete graphs is unbounded.
Second, we prove that every graph (thus every complete graph) has folded $\calB$-covering number at most $2$. 
It follows that the class~$\calComp$ of all complete graphs is not $(\cn{f}{\calB}{},\cn{l}{\calB}{})$-bounded.

In \cite{FH96}, Fishburn and Hammer study local and global covers with complete bipartite graphs.
In particular, they show that the local covering number of complete graphs with complete bipartite graphs is unbounded.
A similar result for local $\calB$-covering numbers easily follows.

\begin{theorem}[Fishburn-Hammer~{\cite[Theorem~5]{FH96}}]
    \label{thm:FishburnHammer}
    Let~$\calBc = \set{K_{n,m} \given n,m \in \N}$ be the class of all complete bipartite graphs and~$\calB$ the class of all bipartite graphs.
    For the class~$\calComp = \set{K_n \given n \in \N}$ of all complete graphs, we have
    \begin{enumerate}[(i)]
        \item\label{item:local-comp-bip-cover-of-complete} $\cn{l}{\calBc}{\calComp}=\infty$, and
        \item\label{item:local-bip-cover-of-complete} $\cn{l}{\calB}{\calComp}=\infty$.
    \end{enumerate}
\end{theorem}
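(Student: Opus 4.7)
The plan is to treat the two parts separately: (i) is the substantive content and I would take it essentially verbatim from Fishburn and Hammer, while (ii) follows from (i) by a short completion argument that exploits the host being complete.

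For part (i), I would simply appeal to the original Fishburn--Hammer Theorem~5. Their proof proceeds by a counting/incidence argument on the two sides of each complete bipartite guest in an $s$-local cover of $K_n$ and concludes that $\cn{l}{\calBc}{K_n}$ is unbounded as a function of $n$. I would quote this statement and not reprove it.

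For part (ii), the key observation is that within a complete host, every bipartite subgraph can be completed to a complete bipartite subgraph on the same vertex set without leaving the host. I would make this precise as follows. Suppose $\varphi\colon G_1 \cupdot \cdots \cupdot G_t \to K_n$ is an injective $s$-local $\calB$-cover of $K_n$. For each $i \in [t]$, fix a bipartition $(X_i, Y_i)$ of the bipartite subgraph $\varphi(G_i) \subseteq K_n$ and set $B_i = K_{X_i, Y_i} \in \calBc$. Since $K_n$ is complete, each $B_i$ is again a subgraph of $K_n$, and the natural inclusion map $\psi\colon B_1 \cupdot \cdots \cupdot B_t \to K_n$ is an injective $\calBc$-cover of $K_n$: edge-surjectivity holds because $E(B_i) \supseteq E(\varphi(G_i))$ and the $\varphi(G_i)$ together cover $E(K_n)$, and $\psi$ is $s$-local because $|\psi^{-1}(v)| = |\{i \setmid v \in V(B_i)\}| = |\varphi^{-1}(v)| \leq s$ for every $v \in V(K_n)$. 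This yields $\cn{l}{\calBc}{K_n} \leq \cn{l}{\calB}{K_n}$, and combining with (i) gives $\cn{l}{\calB}{\calComp} = \infty$.

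The main (and essentially only) obstacle is part (i), for which I rely entirely on the cited Fishburn--Hammer result. Part (ii) is then a purely formal reduction, the only subtle point being the recognition that the completeness of the host is exactly what allows bipartite guests to be replaced by complete bipartite guests on the same vertex set, preserving both edge-surjectivity and locality.
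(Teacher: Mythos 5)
Your proposal is correct and follows essentially the same route as the paper: part (i) is cited from Fishburn--Hammer, and part (ii) is obtained by completing each bipartite guest of an injective $s$-local $\calB$-cover of $K_n$ to a complete bipartite graph on the same vertex set, which stays inside the complete host and preserves both injectivity and locality. This is exactly the paper's argument, so nothing further is needed.
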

\begin{proof}
    For a proof of \eqref{item:local-comp-bip-cover-of-complete}, we refer to \cite[Theorem~5]{FH96}.
    
    To prove~\eqref{item:local-bip-cover-of-complete}, we first show that $\cn{l}{\calB}{H} = \cn{l}{\calBc}{H}$ for every complete graph~$H \in \calComp$.
    Consider an injective $k$-local $\calB$-cover~$\varphi\colon B_1 \cupdot \dots \cupdot B_t \to H$ of a complete graph~$H$.
    Adding missing edges, we obtain for each of the graphs~$B_i$, a complete bipartite graph~$B_i'$ on the same vertex set~$V(B_i') = V(B_i)$ that contains~$B_i$.
    This yields an injective $k$-local $\calBc$-cover~$\varphi \colon B_1' \cupdot \dots \cupdot B_t' \to H$ of~$H$.
    In particular, $\cn{l}{\calB}{H} = \cn{f}{\calBc}{H}$ follows.
    With \eqref{item:local-comp-bip-cover-of-complete} we obtain
    $\cn{l}{\calB}{\calComp} = \cn{l}{\calBc}{\calComp} = \infty$ as desired.
\end{proof}

\begin{lemma}
    \label{lem:folded_bipartite_cover}
    Let~$\calB$ be the class of all bipartite graphs.
    For every graph~$H$, we have $\cn{f}{\calB}{H} \leq 2$.
\end{lemma}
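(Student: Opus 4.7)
The plan is to construct, for any host graph $H$, an explicit $2$-local $\calB$-cover using the standard \emph{bipartite double cover} of $H$. This is a single bipartite graph $B$ together with a vertex-surjective, edge-surjective homomorphism $\varphi\colon B\to H$ in which every vertex of $H$ has exactly two preimages.

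Concretely, first I would define $B$ on vertex set $V(H)\times\{0,1\}$ with edge set
\[
    E(B) = \bigl\{(u,0)(v,1) \setmid uv\in E(H)\bigr\} \cup \bigl\{(u,1)(v,0) \setmid uv\in E(H)\bigr\}.
\]
Then I would check the three required properties in order. First, $B$ is bipartite, since $V(H)\times\{0\}$ and $V(H)\times\{1\}$ form a bipartition: by construction every edge of $B$ has exactly one endpoint in each class. Second, the projection $\varphi\colon B\to H$ defined by $\varphi((v,i)) = v$ is a graph homomorphism, since by construction every edge $(u,i)(v,1-i)$ of $B$ is mapped to the edge $uv$ of $H$. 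Moreover, every edge $uv\in E(H)$ has at least one preimage, namely $(u,0)(v,1)$, so $\varphi$ is edge-surjective and hence a $\calB$-cover of $H$. Third, for every $v\in V(H)$ we have $\varphi^{-1}(v) = \{(v,0),(v,1)\}$, so $|\varphi^{-1}(v)|\leq 2$, which means $\varphi$ is $2$-local. Combining these three facts yields $\cn{f}{\calB}{H}\leq 2$.

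There is essentially no obstacle here: the construction is standard, and we are making explicit use of the fact that the folded covering number allows non-injective covers (indeed $\varphi$ is typically non-injective on $B$ when $H$ contains odd cycles). This is exactly what distinguishes $\cn{f}{\calB}{}$ from $\cn{l}{\calB}{}$ and makes the contrast with \cref{thm:FishburnHammer} sharp: while $\cn{l}{\calB}{\calComp}=\infty$, the folded version is bounded by a universal constant.
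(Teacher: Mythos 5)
Your proposal is correct and is essentially identical to the paper's proof: the paper also takes the bipartite double cover (phrased as a ``black'' and a ``white'' copy of each vertex) and projects each copy to its parent, yielding a $2$-local $\calB$-cover. Nothing further to add.
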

\begin{proof}
    For every vertex~$v \in V(H)$, we introduce a \emph{black copy}~$v_{\mathrm{b}}$ and a \emph{white} copy~$v_{\mathrm{w}}$ of~$v$ and call~$v$ the parent of~$v_{\mathrm{b}}$ and~$v_{\mathrm{w}}$.
    Now let~$H'$ be the bipartite graph on the black and white vertices~$V(H') = \set{v_{\mathrm{b}}, v_{\mathrm{w}} \given v \in V(H)}$ where two vertices $u_{\mathrm{b}}$ and~$v_{\mathrm{w}}$ are joined with an edge if $uv\in E(H)$.
    As the graph homomorphism~$\varphi\colon H' \to H$ where every vertex of~$V(H')$ is mapped to its parent in~$V(H)$ is a $2$-local $\calB$-cover, we obtain $\cn{f}{\calB}{H} \leq 2$.
\end{proof}

Combining \cref{thm:FishburnHammer} and \cref{lem:folded_bipartite_cover} now shows that the class~$\calComp$ of all complete graphs is not $(\cn{l}{\calB}{},\cn{f}{\calB}{})$-bounded.

\begin{proposition}\label{lem:sep_folded-folded_G_mon_chi_H_mon}
    For the class~$\calB$ of all bipartite graphs and the class~$\calComp$ of all complete graphs, we have
    \[
        \cn{l}{\calB}{\calComp} = \infty, \qquad \cn{f}{\calB}{\calComp} \leq 2.
    \]
    In particular, there is a monotone guest class~$\calG$ with bounded chromatic number and a host class~$\calH$ that is not $(\cn{l}{\calG}{},\cn{f}{\calG}{})$-bounded.
\end{proposition}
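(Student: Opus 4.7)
The plan is to obtain the proposition as an immediate combination of the two results stated just before it, namely \cref{thm:FishburnHammer}\eqref{item:local-bip-cover-of-complete} and \cref{lem:folded_bipartite_cover}, together with the elementary observation that the class $\calB$ of bipartite graphs is monotone and of chromatic number at most~$2$.

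First I would invoke \cref{thm:FishburnHammer}\eqref{item:local-bip-cover-of-complete}, which gives $\cn{l}{\calB}{\calComp}=\infty$ directly; no further work is needed for the lower-bound half of the statement. Next, I would apply \cref{lem:folded_bipartite_cover} to every complete graph $K_n \in \calComp$: since that lemma holds for every graph whatsoever, it gives $\cn{f}{\calB}{K_n} \leq 2$ uniformly in $n$, and hence $\cn{f}{\calB}{\calComp} \leq 2$ as required.

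Finally, I would spell out the ``in particular'' clause by verifying that the pair $(\calG,\calH) = (\calB,\calComp)$ witnesses the failure of $(\cn{l}{\calG}{},\cn{f}{\calG}{})$-boundedness with the stated structural restrictions. Concretely: $\calB$ is closed under taking subgraphs (every subgraph of a bipartite graph is bipartite), hence monotone, and $\chi(G)\leq 2$ for every $G\in\calB$, so $\calB$ has bounded chromatic number. On the other hand, $\cn{f}{\calB}{\calComp}\leq 2$ is bounded while $\cn{l}{\calB}{\calComp}=\infty$, so no function $f\colon\N\to\R_{\geq 0}$ can satisfy $\cn{l}{\calB}{K_n}\leq f(\cn{f}{\calB}{K_n})$ for all $n$. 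Thus $\calComp$ is not $(\cn{l}{\calB}{},\cn{f}{\calB}{})$-bounded, completing the proof.

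Since every ingredient is already in place, there is essentially no obstacle here; the only ``work'' is stringing the two cited results together and checking the structural properties of $\calB$. The proof will be just a few lines.
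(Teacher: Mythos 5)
Your proposal is correct and matches the paper exactly: the paper derives \cref{lem:sep_folded-folded_G_mon_chi_H_mon} by combining \cref{thm:FishburnHammer}\eqref{item:local-bip-cover-of-complete} with \cref{lem:folded_bipartite_cover}, just as you do, and the structural checks on $\calB$ are immediate.
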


\section{When bounded \texorpdfstring{\bm{$\mathrm{c}_{\mathrm{y}}$}}{cy} implies bounded \texorpdfstring{\bm{$\mathrm{c}_{\mathrm{x}}$}}{cx}}
\label{sec:hereditary_hosts}

In \cref{sec:non-hereditary-guest-classes,sec:hereditary-guest-classes} we investigate whether every host class $\calH$ with certain requirements is $(\cn{x}{\calG}{},\cn{y}{\calG}{})$-bounded for every guest class~$\calG$ with certain requirements.
For any guest class~$\calG$, a host class $\calH$ is \emph{not} $(\cn{x}{\calG}{},\cn{y}{\calG}{})$-bounded if and only if there is a subclass $\calH' \subseteq \calH$ with~$\cn{y}{\calG}{\calH'} < \infty$ but $\cn{x}{\calG}{\calH'} = \infty$.
Let us call $\calH'$ \emph{weakly $(\cn{x}{\calG}{},\cn{y}{\calG}{})$-bounded} if $\cn{y}{\calG}{\calH'} < \infty$ implies that also $\cn{x}{\calG}{\calH'} < \infty$.
In other words, $\calH$ is $(\cn{x}{\calG}{},\cn{y}{\calG}{})$-bounded if and only if every $\calH' \subseteq \calH$ is weakly $(\cn{x}{\calG}{},\cn{y}{\calG}{})$-bounded.

In \cref{sec:non-hereditary-guest-classes,sec:hereditary-guest-classes} we give several examples of $\calG$ and $\calH$ where $\calH$ is not $(\cn{x}{\calG}{},\cn{y}{\calG}{})$-bounded.
The host class $\calH$ is always some ``natural'' graph class (forests, planar graphs, stars, shift graphs, or complete graphs).
However, in two cases (corresponding to \cref{thm:minor_comp_sep,thm:tw_sep}) the subclass $\calH' \subseteq \calH$ that is not weakly $(\cn{x}{\calG}{},\cn{y}{\calG}{})$-bounded consists of very peculiar, ``unnatural'' graphs from $\calH$.
For example, $\calH'$ is not hereditary in these two cases, while the underlying class $\calH$ is hereditary.
In fact, the underlying host classes $\calH$ in \cref{thm:minor_comp_sep,thm:tw_sep} are weakly $(\cn{x}{\calG}{},\cn{y}{\calG}{})$-bounded (for the respective guest class $\calG$), and one might wonder whether there are nicer, namely hereditary, host classes (with the required properties) that are not weakly $(\cn{x}{\calG}{},\cn{y}{\calG}{})$-bounded.

In this section, we consider the question under which restrictions on $\calH$ and $\calG$, we have that~$\calH$ is weakly $(\cn{x}{\calG}{},\cn{y}{\calG}{})$-bounded, i.e., $\cn{y}{\calG}{\calH} < \infty$ implies $\cn{x}{\calG}{\calH} < \infty$.
Every $(\cn{x}{\calG}{},\cn{y}{\calG}{})$-bounded class is also weakly $(\cn{x}{\calG}{},\cn{y}{\calG}{})$-bounded.
So all results in the respective first parts of \cref{thm:local-folded-introduction,thm:union-local-introduction} in particular show weakly $(\cn{x}{\calG}{},\cn{y}{\calG}{})$-boundedness.
But considering the same structural restrictions on $\calG$ and $\calH$ as before, and additionally requiring that~$\calH$ is hereditary, we can prove weakly $(\cn{x}{\calG}{},\cn{y}{\calG}{})$-boundedness in some cases where $(\cn{x}{\calG}{},\cn{y}{\calG}{})$-boundedness does not hold.

\begin{remark*}
    If we do not require $\calH$ to be hereditary, then the situation for weakly $(\cn{x}{\calG}{},\cn{y}{\calG}{})$-boundedness is the same as for $(\cn{x}{\calG}{},\cn{y}{\calG}{})$-boundedness.
    In fact, for every pair $(\calG,\calH)$ with $\calH$ not $(\cn{x}{\calG}{},\cn{y}{\calG}{})$-bounded, there is some $\calH' \subseteq \calH$ such that $\calH'$ is not weakly $(\cn{x}{\calG}{},\cn{y}{\calG}{})$-bounded.
    And all our restrictions on $\calH$ (bounded $\chi$, bounded $\mad$, $M$-minor-free, bounded $\tw$) carry over to $\calH'$.
    Hence, we obtain the pair $(\calG,\calH')$ meeting the same requirements as $(\calG,\calH)$, but with $\calH'$ being not weakly $(\cn{x}{\calG}{},\cn{y}{\calG}{})$-bounded.
\end{remark*}

Let us start with weakly $(\cn{l}{\calG}{},\cn{f}{\calG}{})$-boundedness of hereditary host classes $\calH$.
As before, we ask under which restrictions on $\calG$ and $\calH$, is every hereditary host class $\calH$ weakly $(\cn{l}{\calG}{},\cn{f}{\calG}{})$-bounded with respect to every guest class $\calG$.
In fact, the story here is quickly told, as all results from \cref{sec:hereditary-guest-classes,sec:non-hereditary-guest-classes} simply carry over.
Every $(\cn{l}{\calG}{},\cn{f}{\calG}{})$-bounded $\calH$ is also weakly $(\cn{l}{\calG}{},\cn{f}{\calG}{})$-bounded.
And in all our examples the class $\calH$ that is not $(\cn{l}{\calG}{},\cn{f}{\calG}{})$-bounded is also not weakly $(\cn{l}{\calG}{},\cn{f}{\calG}{})$-bounded.

However, the situation for weakly $(\cn{u}{\calG}{},\cn{l}{\calG}{})$-boundedness of hereditary host classes $\calH$ is different.
Recall that by \cref{thm:minor_comp_sep}, there exists an $M$-minor-free host class $\calH$ and a component-closed guest class $\calG$ such that $\calH$ is not $(\cn{u}{\calG}{},\cn{l}{\calG}{})$-bounded.
While $\calH$ is hereditary (namely the class of all planar graphs), one can show that $\calH$ is weakly $(\cn{u}{\calG}{},\cn{l}{\calG}{})$-bounded.
In fact, we shall show that every hereditary, $M$-minor-free (actually, sparse is enough) host class~$\calH$ is weakly $(\cn{u}{\calG}{},\cn{l}{\calG}{})$-bounded with respect to every component-closed guest class $\calG$.
The argument is similar to the proof of \cref{thm:union_bounded_by_folded_G_her_mad}. 
We decompose each graph in~$\calH$ into a constant number of weak induced star forests and make use of the fact that local and union $\calG$-covering numbers coincide for these graphs (cf. \cref{lem:folded_equal_union_for_stars_with_G_her}).
Recall that a subgraph $H'$ of~$H$ is \emph{weak induced} if each connected component of~$H'$ is an induced subgraph of~$H$.

\begin{proposition}
    \label{lem:union_bounded_by_local_for_H_her_mad}
    Let~$\calH$ be a hereditary host class with $\mad(\calH) \leq d$ and~$\calG$ be any guest class. 
    If there exists a constant~$s$ such that $\cn{l}{\calG}{\calH} \leq s$, then 
    \[
        \cn{u}{\calG}{\calH} \leq 2sd.
    \]
    In particular, such host classes are weakly $(\cn{u}{\calG}{},\cn{l}{\calG}{})$-bounded.
\end{proposition}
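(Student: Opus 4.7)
The proof will follow the same template as \cref{thm:union_bounded_by_folded_G_her_mad}, but adapted to the situation where $\calG$ is arbitrary rather than hereditary. The plan is to decompose each $H \in \calH$ into a constant number of weak induced star forests, bound $\cn{u}{\calG}{}$ on each star forest by exploiting only the single-star case of \cref{lem:folded_equal_union_for_stars_with_G_her}, and combine the resulting covers.

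\textbf{Step 1 (decomposition).} Fix any $H \in \calH$ and let $s = \cn{l}{\calG}{H} \leq \cn{l}{\calG}{\calH}$. Since $\mad(H) \leq d$, \cref{lem:mad_bounded_split_into_induced_star_forests} yields weak induced star forests $F_1,\ldots,F_k$ with $k \leq 2d$ whose union is $H$.

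\textbf{Step 2 (covering a single star forest).} Let $F_i$ have connected components $K_1,\ldots,K_m$, each of which is a star and, since $F_i$ is weak induced, an induced subgraph of $H$. Because $\calH$ is hereditary, each $K_j \in \calH$, so $\cn{l}{\calG}{K_j} \leq s$. Applying \cref{lem:folded_equal_union_for_stars_with_G_her}\eqref{item:local-union-star} (which needs no assumption on $\calG$) gives $\cn{u}{\calG}{K_j} \leq s$ for each $j$. Let $\varphi_j \colon G_{j,1} \cupdot \cdots \cupdot G_{j,s} \to K_j$ be a corresponding injective $s$-global $\union{\calG}$-cover (padding with the empty graph if fewer than $s$ guests are used). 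For $\ell \in [s]$ set $G_\ell = \bigcupdot_{j=1}^m G_{j,\ell}$, which lies in $\union{\calG}$ because the $K_j$ are pairwise vertex-disjoint. Then $\varphi^{(i)} \colon G_1 \cupdot \cdots \cupdot G_s \to F_i$ defined by $\varphi^{(i)}|_{G_{j,\ell}} = \varphi_j|_{G_{j,\ell}}$ is an injective $s$-global $\union{\calG}$-cover of $F_i$, so $\cn{u}{\calG}{F_i} \leq s$.

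\textbf{Step 3 (combining).} Concatenating the $s$-global $\union{\calG}$-covers of $F_1,\ldots,F_k$ yields a $(ks)$-global injective $\union{\calG}$-cover of $H$, so $\cn{u}{\calG}{H} \leq ks \leq 2sd$, as desired. Taking the supremum over $H \in \calH$ gives $\cn{u}{\calG}{\calH} \leq 2sd$, and in particular $\calH$ is weakly $(\cn{u}{\calG}{},\cn{l}{\calG}{})$-bounded.

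The main subtle point is that we cannot appeal to parts \eqref{item:local-union-star-forest} or \eqref{item:folded-local-star} of \cref{lem:folded_equal_union_for_stars_with_G_her}, since $\calG$ is not assumed to be component-closed or hereditary here; this is why we must argue component-by-component on each $F_i$ and glue the per-star covers using the vertex-disjointness of the components, rather than treating $F_i$ as a star forest wholesale.
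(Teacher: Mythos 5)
Your proof is correct and follows essentially the same route as the paper: decompose $H$ into at most $2d$ weak induced star forests via \cref{lem:mad_bounded_split_into_induced_star_forests}, use heredity of $\calH$ to get $\cn{l}{\calG}{}\leq s$ on each star component, apply \cref{lem:folded_equal_union_for_stars_with_G_her}\eqref{item:local-union-star} per star, and glue. Your Step~2 merely spells out the component-by-component gluing that the paper compresses into ``it also follows that $\cn{u}{\calG}{F_i}\leq s$'', and your closing remark about why parts \eqref{item:local-union-star-forest} and \eqref{item:folded-local-star} are unavailable is accurate.
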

\begin{proof}
    Consider any graph~$H \in \calH$. 
    By \cref{lem:mad_bounded_split_into_induced_star_forests}, $H$ is the union of $2d$ weak induced star forests~$F_1, \dots, F_{2d}$.
    For $i \in [2d]$, let $S_{i,1}, \dots, S_{i,k}$ be the components of~$F_i$.
    Each star~$S_{i,j}$ is an induced subgraph of~$H$ and thus lies in~$\calH$, as $\calH$ is hereditary.
    In particular, we have $\cn{l}{\calG}{S_{i,j}} \leq s$ for each star~$S_{i,j}$ and with \cref{lem:folded_equal_union_for_stars_with_G_her}\eqref{item:local-union-star}, we obtain $\cn{u}{\calG}{S_{i,j}} \leq s$.
    It also follows that $\cn{u}{\calG}{F_i} \leq s$.
    That is, for each star forest~$F_i$, there exists an injective $s$-global $\union{\calG}$-cover~$\varphi\colon G_{i,1} \cupdot \dots \cupdot G_{i,s} \to F_i$ of~$F_i$.
    Combining the $2d$ covers of the forests~$F_i$, yields an injective $(s \cdot 2d)$-global $\union{\calG}$-cover~$\varphi\colon G_{1,1} \cupdot \dots \cupdot G_{2d,s} \to H$ which certifies $\cn{u}{\calG}{H} \leq 2sd$.
\end{proof}

In fact, by \cref{lem:mad_G_bounded_bounded_folded_cov_for_induced_subgraphs_then_mad_H_bounded}, the conditions of \cref{lem:union_bounded_by_local_for_H_her_mad} above are also met if we require the guest class to be sparse instead of the host class.

\begin{corollary}
    \label{lem:union_bounded_by_local_for_H_her_G_mad}
    Let~$\calH$ be a hereditary host class and let~$\calG$ be any guest class with~$\mad(\calG) \leq d$. 
    If there exists a constant~$s$ such that $\cn{l}{\calG}{\calH} \leq s$, then $\cn{u}{\calG}{\calH} \leq 2s^2d$.

    In particular, such host classes are weakly $(\cn{u}{\calG}{},\cn{l}{\calG}{})$-bounded.
\end{corollary}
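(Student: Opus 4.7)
The plan is to chain together \cref{lem:mad_G_bounded_bounded_folded_cov_for_induced_subgraphs_then_mad_H_bounded} and \cref{lem:union_bounded_by_local_for_H_her_mad}. The first converts the hypothesis ``$\cn{l}{\calG}{\calH}$ is bounded'' (combined with sparseness of $\calG$) into ``$\mad(\calH)$ is bounded'', after which the second translates the bound on $\cn{l}{\calG}{}$ into a bound on $\cn{u}{\calG}{}$.

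Concretely, I would fix an arbitrary $H \in \calH$. Since $\calH$ is hereditary and $\cn{l}{\calG}{\calH} \leq s$, every induced subgraph $H' \subseteq H$ also lies in $\calH$ and hence satisfies $\cn{l}{\calG}{H'} \leq s$. Combined with the inequality $\cn{f}{\calG}{H'} \leq \cn{l}{\calG}{H'}$ from~\eqref{eq:gulf-chain}, this yields $\cn{f}{\calG}{H'} \leq s$ for every induced subgraph of $H$. Now \cref{lem:mad_G_bounded_bounded_folded_cov_for_induced_subgraphs_then_mad_H_bounded} (applied with the constant $s$) gives $\mad(H) \leq sd$. As $H \in \calH$ was arbitrary, we conclude $\mad(\calH) \leq sd$.

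At this point $\calH$ is a hereditary host class satisfying both $\mad(\calH) \leq sd$ and $\cn{l}{\calG}{\calH} \leq s$, so the hypotheses of \cref{lem:union_bounded_by_local_for_H_her_mad} are met with the parameter $d$ replaced by $sd$. Applying that proposition yields
\[
    \cn{u}{\calG}{\calH} \leq 2s \cdot (sd) = 2s^2 d,
\]
which is the desired bound. There is no genuine obstacle in this argument; the content of the statement is simply the observation that the sparseness assumption can equivalently be placed on the guest class rather than the host class, because a uniform bound on $\cn{f}{\calG}{}$ of all induced subgraphs propagates sparseness from $\calG$ to $\calH$ via \cref{lem:mad_G_bounded_bounded_folded_cov_for_induced_subgraphs_then_mad_H_bounded}.
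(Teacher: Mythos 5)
Your proposal is correct and matches the paper's intended argument exactly: the corollary is justified precisely by using \cref{lem:mad_G_bounded_bounded_folded_cov_for_induced_subgraphs_then_mad_H_bounded} (via heredity of $\calH$ and the inequality $\cn{f}{\calG}{} \leq \cn{l}{\calG}{}$) to deduce $\mad(\calH) \leq sd$, and then invoking \cref{lem:union_bounded_by_local_for_H_her_mad} with $sd$ in place of $d$ to obtain $\cn{u}{\calG}{\calH} \leq 2s^2d$. Your observation that heredity of $\calH$ (rather than of $\calG$) is what guarantees the uniform folded bound on all induced subgraphs is exactly the right point.
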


We summarize in \cref{fig:overview_bounded_by_constant} the situation for weakly $(\cn{u}{\calG}{},\cn{l}{\calG}{})$-boundedness of hereditary host classes $\calH$ under our considered structural restrictions on $\calH$ and $\calG$.
Observe that only one separating example remains (namely \cref{lem:sep_local_union_H_mon_G_mon_chi} with $\calH$ being all shift graphs and $\calG$ being all bipartite graphs).
But it remains open whether \cref{lem:union_bounded_by_local_for_H_her_mad} can be generalized from host classes with bounded $\mad$ to host classes with bounded $\chi$.
In fact, it would be enough to resolve the cases of bipartite graphs.

\begin{question}
    Is there a hereditary class $\calH$ of bipartite graphs and a class $\calG$ such that $\calH$ is not weakly $(\cn{u}{\calG}{},\cn{l}{\calG}{})$-bounded, i.e., with $\cn{l}{\calG}{\calH} \leq s$ for a global constant $s$, while $\cn{u}{\calG}{\calH} = \infty$?
\end{question}

\begin{table}[ht]
	\ra{1.3}
	\def\smallDist{3}
	\def\largeDist{8}
	\small
	\centering
	\begin{subtable}[t]{\textwidth}
		\centering
		\begin{tabularx}{0.89\textwidth}{c c c@{\extracolsep{0pt}} @{\extracolsep{\smallDist pt}}c@{\extracolsep{0cm}} @{\extracolsep{\largeDist pt}}c@{\extracolsep{0cm}} @{\extracolsep{\smallDist pt}}c@{\extracolsep{0cm}} @{\extracolsep{\smallDist pt}}c@{\extracolsep{0cm}} @{\extracolsep{\largeDist pt}}c@{\extracolsep{0cm}} @{\extracolsep{\smallDist pt}}c@{\extracolsep{0cm}} @{\extracolsep{\smallDist pt}}c@{\extracolsep{0cm}}}
			\toprule
			\multicolumn{2}{c}{\multirow{2}{*}{\diagbox[width=8em]{\phantom{vu}$\calH$}{$\calG$\phantom{vu}}}} & \multirow{2}{*}{any} & \multirow{2}{*}{cc} & \multicolumn{3}{c}{hereditary} & \multicolumn{3}{c}{monotone} \\
			\arrayrulecolor{black}\cmidrule(r){5-7} \cmidrule(l){8-10}
			& &   &    & \multicolumn{1}{c}{any} & $\chi$ & $\mad$ & \multicolumn{1}{c}{any} & $\chi$ & \multicolumn{1}{c}{$\mad$} \\
			  \cmidrule{3-10}
			\arrayrulecolor{white}
			\multirow{6}{*}{\begin{sideways}hereditary\end{sideways}} & any & \multicolumn{1}{|c|}{\cellcolor{red30}\phantom{\shortref{lem:union_bounded_by_local_for_H_her_mad}}} & \multicolumn{1}{|c|}{\cellcolor{red30}\phantom{\shortref{lem:sep_folded-local_G_cc_H_tw}}} & \multicolumn{1}{|c|}{\cellcolor{red30}} & \multicolumn{1}{|c|}{\cellcolor{red30}\phantom{\shortref{thm:union_bounded_by_folded_G_her_mad}}} & \multicolumn{1}{|c|}{\cellcolor{green30}\shortref{thm:union_bounded_by_folded_G_her_mad}} & \multicolumn{1}{|c|}{\cellcolor{red30}} & \multicolumn{1}{|c|}{\cellcolor{red30}{\shortref{lem:sep_local_union_H_mon_G_mon_chi}}} & \multicolumn{1}{|c|}{\cellcolor{green30}\phantom{\shortref{thm:union_bounded_by_folded_G_her_mad}}} \\
			\cmidrule{3-10}
			& bounded $\chi$ & \multicolumn{1}{|c|}{\cellcolor{openColor}?} & \multicolumn{1}{|c|}{\cellcolor{openColor}?} & \multicolumn{1}{|c|}{\cellcolor{green30}\shortref{thm:union_bounded_by_folded_G_hereditary_H_chi}} & \multicolumn{1}{|c|}{\cellcolor{green30}} & \multicolumn{1}{|c}{\cellcolor{green30}} & \multicolumn{1}{|c|}{\cellcolor{green30}\smallshortref{thm:union_bounded_by_folded_G_monotone_H_chi}} & \multicolumn{1}{|c|}{\cellcolor{green30}} & \multicolumn{1}{|c|}{\cellcolor{green30}}\\
			\cmidrule{3-10}
			& bounded $\mad$ & \multicolumn{1}{|c|}{\cellcolor{green30}\shortref{lem:union_bounded_by_local_for_H_her_mad}} & \multicolumn{1}{|c}{\cellcolor{green30}\phantom{\shortref{lem:union_bounded_by_local_for_H_her_mad}}} & \multicolumn{1}{|c|}{\cellcolor{green30}} & \multicolumn{1}{|c|}{\cellcolor{green30}} & \multicolumn{1}{|c}{\cellcolor{green30}} & \multicolumn{1}{|c|}{\cellcolor{green30}} & \multicolumn{1}{|c|}{\cellcolor{green30}} & \multicolumn{1}{|c|}{\cellcolor{green30}}\\
			\cmidrule{3-10}
			& $M$-minor-free & \multicolumn{1}{|c|}{\cellcolor{green30}} & \multicolumn{1}{|c}{\cellcolor{green30}} & \multicolumn{1}{|c|}{\cellcolor{green30}} & \multicolumn{1}{|c|}{\cellcolor{green30}} & \multicolumn{1}{|c}{\cellcolor{green30}} & \multicolumn{1}{|c|}{\cellcolor{green30}} & \multicolumn{1}{|c|}{\cellcolor{green30}} & \multicolumn{1}{|c|}{\cellcolor{green30}}\\
			\cmidrule{3-10}
			& bounded $\tw$ & \multicolumn{1}{|c|}{\cellcolor{green30}} & \multicolumn{1}{|c|}{\cellcolor{green30}\smallshortref{thm:cc_tw_bound}}& \multicolumn{1}{|c|}{\cellcolor{green30}} & \multicolumn{1}{|c|}{\cellcolor{green30}} & \multicolumn{1}{|c|}{\cellcolor{green30}} & \multicolumn{1}{|c|}{\cellcolor{green30}} & \multicolumn{1}{|c|}{\cellcolor{green30}} & \multicolumn{1}{|c|}{\cellcolor{green30}}\\
			\arrayrulecolor{black}\bottomrule
		\end{tabularx}
        
		\medskip
        
		\caption{%
                Is $\calH$ weakly $(\cn{u}{\calG}{},\cn{l}{\calG}{})$-bounded?%
                \;\; \textcolor{green30}{$\blacksquare$} Always Yes \;\; \textcolor{red30}{$\blacksquare$} Sometimes No \;\; \textcolor{openColor}{$\blacksquare$} Unknown
            }
		\label{fig:union-local_bounded_by_constant}
	\end{subtable}
	\caption{
		Overview of the results in \cref{sec:hereditary_hosts}.
	}
	\label{fig:overview_bounded_by_constant}
\end{table}

\section{Conclusions}
\label{sec:conclusions}

We have considered a specific set of structural properties for a guest class $\calG$ and a host class $\calH$ under which we can conclude that $\calH$ is $(\cn{x}{\calG}{},\cn{y}{\calG}{})$-bounded for a specific pair $\cn{x}{\calG}{}$, $\cn{y}{\calG}{}$ of $\calG$-covering numbers.
Our results are exhaustive; we provide examples of $\calG$ and $\calH$ (having the demanded structural properties) for which $\calH$ is not $(\cn{x}{\calG}{},\cn{y}{\calG}{})$-bounded, whenever such an example exists.
Let us refer again to \cref{fig:overview} for an overview of the results.

Still, some questions remain open.
In the cases that $\calH$ is $(\cn{x}{\calG}{},\cn{y}{\calG}{})$-bounded, one can try to determine the binding function as best as possible.
Our proofs give quadratic binding functions, but it might actually be linear in all cases.
Further, one might consider different requirements for $\calG$ and/or $\calH$, such as being nowhere dense~\cite{nevsetvril_nowhere_dense_2011} or having bounded Weisfeiler-Leman dimension~\cite{WL68}.
Also, some worthwhile graph classes have none or only few of our considered properties.
Consider for example the class $\calG = {\rm Forb}(T) = \{ G \setmid T \not\subseteq G\}$ of all $T$-free graphs for some fixed graph $T$.
This hereditary class is interesting as the global and local ${\rm Forb}(T)$-covering numbers are intimately related to Ramsey numbers and local Ramsey numbers~\cite{GLST87} of $T$, respectively.

As apparent from \cref{fig:overview}, $(\cn{u}{\calG}{},\cn{l}{\calG}{})$-boundedness and $(\cn{l}{\calG}{},\cn{f}{\calG}{})$-boundedness turn out to behave quite similarly in the realm of our investigations.
Is this a coincidence or is there a particular connection to be uncovered?

Finally, whenever a host class $\calH$ is $(\cn{x}{\calG}{},\cn{y}{\calG}{})$-bounded for a guest class $\calG$, one may take advantage of this for algorithmic questions.
For example, for the class $\union{\calS}$ of all star forests, the union $\union{\calS}$-covering number $\cn{u}{\union{\calS}}{H}$ is the star arboricity of $H$ and NP-hard to compute~\cite{HMS96}.
On the other hand, the local counterpart $\cn{l}{\union{\calS}}{H}$ can be computed in polynomial time~\cite{Knauer2016_3w3c1g}.
As $\union{\calS}$ is hereditary (even monotone) and has bounded $\mad$, we can conclude with \cref{thm:union_bounded_by_folded_G_her_mad} that the star arboricity $\cn{u}{\union{\calS}}{}$ can be approximated in polynomial time.
(In fact, for the class $\calF$ of all forests and all graphs $H$ we have $\cn{u}{\union{\calS}}{H} \leq 2\cn{g}{\calF}{H}$~\cite{alon-star-arboricity} and $\cn{g}{\calF}{H} \leq \cn{l}{\union{\calS}}{H}$~\cite{Knauer2016_3w3c1g}, which gives $\cn{u}{\union{\calS}}{H} \leq 2\cn{l}{\union{\calS}}{H}$ and hence a $2$-approximation of $\cn{u}{\union{\calS}}{}$ in polynomial time.)
We leave it to future work to identify further such algorithmic exploits of our results in the present paper.

\bibliographystyle{plainurl}
\bibliography{references}
\end{document}